\pgfplotsset{compat=1.14}
\newtheorem{lemma}{Lemma}[section]
\newtheorem{corollary}[lemma]{Corollary}
\newtheorem{theorem}[lemma]{Theorem}
\newtheorem{proposition}[lemma]{Proposition}
\theoremstyle{definition}
\newtheorem{definition}{Definition}[section]
\newtheorem{remarks}[definition]{Remarks}
\newtheorem{remark}[definition]{Remark}
\newtheorem{example}[definition]{Example}
\newcommand{\N}{\mathbb{N}}
\newcommand{\Leo}{\mathbb{L}}
\newcommand{\Le}{\mathbb{L}_K(1, n)}
\newcommand{\Lem}{\mathbb{L}_K(m,n)}
\newcommand{\inlim}{\underrightarrow{\rm lim}}
\newcommand{\fL}{\frak L}
\def\su{\mathop{\sum}\limits_{i=1}^{n}}
\DeclareMathOperator{\End}{End}
\DeclareMathOperator{\Aut}{Aut}
\DeclareMathOperator{\ann}{ann}
\DeclareMathOperator{\Hom}{Hom}  
\DeclareMathOperator{\codim}{codim} 
\def\a{\alpha}
\def\b{\beta}
\def\f{\phi}
\def\r{\rho}
\def\s{\sigma}
\def\p{\pi}
\def\l{\lambda}
\def\L{\Lambda}
\def\L^*{\Lambda^*}
\def\g{\gamma} 
\def\d{\delta}
\def\m{\mu}
\def\bB{\bf B}
\begin{document}
\title{Non-commutative factorizations and finite-dimensional representations of free algebras}
\author{P. N. \'Anh}
\address{Hun-Ren R\'enyi Institute of Mathematics, 1364 Budapest, Pf. 127 Hungary} \email{anh@renyi.hu}
\author{F. Mantese}
\address{Department of Computer Science, University of Verona, Strada le Grazie 15,
37134 Italy} \email{francesca.mantese@univr.it}
\thanks{The first author was partially supported by National Research, Development and Innovation Office NKHIH K138828 and  K132951, by both Vietnam Institute for Advanced Study in Mathematics (VIASM) and  Vietnamese Institute of Mathematics. The second author was partially supported by  the National Recovery and Resilience Plan (NRRP), Mission 4 Component 2 Investment 1.1 - Call PRIN 2022 No. 104 of February 2, 2022 of Italian Ministry of University and Research; Project 2022S97PMY (subject area: PE - Physical Sciences and Engineering) "Structures for Quivers, Algebras and Representations (SQUARE)".  She is  moreover member of INDAM - GNSAGA}

\makeatletter
\@namedef{subjclassname@2020}{\textup{2020} Mathematics Subject Classification}
\makeatother
\subjclass[2020]{Primary 16S88, 16S90; secondary 16G20, 16P50.}
\keywords{free algebras, irreducibile polynomials, atomic factorizations, Leavitt algebras, finitely presented modules, flat rings of quotients}
\date{\today}
\maketitle
\begin{abstract}
A very first step to develop non-commutative algebraic geometry is the arithmetic of polynomials in non-commuting variables over a commutative field, that is, the study of {elements in} free associative algebras. This investigation is presented as a natural extension of the classical theory in one variable by using Leavitt algebras, which are localizations of free algebras with respect to the Gabriel topology defined by an ideal of codimension 1. In particular to any polynomial in $n$ non-commuting variables with non-zero constant term we associate a finite-dimensional module over the free algebra of rank $n$, which turns out to be simple if and only if the polynomial is irreducible. This approach leads to new insights in the study of the factorization of polynomials into irreducible ones and other related topics, such as an algorithm to divide polynomials or to compute the greatest common divisor between them, or a description of similar polynomials.
The case of polynomials with zero constant term reduces to an open question whether the intersection of {nonunital} subalgebras of codimension 1 in a free associative algebra is trivial, that is, 0. 
\end{abstract}
\section{Introduction}
\label{int} 
The classical theory of polynomials in one variable over a commutative field $K$  assigns to every  irreducible polynomial its residue field, i.e., its finite-dimensional field extension of remainders by Euclidean division, and shows that every polynomial is uniquely a product of irreducible ones up to permutation and units. The 
division algorithm to compute the greatest common divisor of polynomials leads to a complete picture of the lattice of the ideals of $K[x]$. 
Moreover, the irreducible representations of  $K[x]$ are explicitly constructed from the irreducible polynomials in $K[x]$. 

Analogous results for polynomials in several commutative variables are no longer true. For instance, the factor algebra of an irreducible polynomial  in  $K[x_1, \cdots, x_n]$ for $n\geq 2$ is not a field extension of $K$ and is never of finite dimension.

For $n\geq 2$  polynomials in non-commuting variables are elements of the free associative  algebra $K\langle x_1, \cdots, x_n\rangle$. Factorization problems in  $K\langle x_1, \cdots, x_n\rangle$ have been deeply studied by  Cohn in \cite{c2}, \cite{c4}, \cite{c5}, \cite{c6}, where he showed that  free algebras have a weak division algorithm,  they are weak B\'ezout free ideal rings, and they are  unique factorization rings, i.e any element is {the} product of irreducible ones in a unique way up to units and similarity. Roughly speaking, weaker versions of the relevant properties of $K[x]$ to be a Euclidean domain, a principal ideal domain and a unique factorization domain hold for the free algebra $K\langle x_1, \cdots, x_n\rangle$. Moreover,  by considering the category  of strictly cyclic modules or more generally  of (specific) torsion modules over {free ideal rings}, Cohn  succeeded to show a categorical simplicity of factor modules of the free algebra  by irreducible polynomials. However, Cohn's approach {leaves} open the problem to  parametrize the irreducible representations of $K\langle x_1, \cdots, x_n\rangle$ by irreducible polynomials, as in the classical case of one variable.

The aim of our work is to present a theory of polynomials 
which coincides with the classical one in case of one variable. {The starting point is the Cohn's unique normal form \cite{c3}
$$\l=k+\sum\limits_{i=1}^n x_i\l_{x_i} \quad (k\in K; \, \l_{x_i} \in K\langle x_1, \cdots, x_n\rangle)$$
of any polynomial $\l\in K\langle x_1, \cdots, x_n\rangle$, where $\l_{x_i}=\l_i$ is called  the \emph{cofactor} with respect to the $i$-th axis (i.e., variable) $x_i$. Hence taking cofactors behaves like taking generalized partial differentiation or truncated partial inverse along axes. One can iterate this process to obtain further cofactors of $\l$ until getting nonzero constants.}
 For any polynomial $\l$ in $K\langle x_1, \cdots, x_n\rangle$  with {nonzero} constant term we define an easily handled finite-dimensional vector space $V_\l$, {spanned by all nonzero {iterated} cofactors of $\l$}, which admits a natural left module structure over the free algebra. This new invariant becomes a good tool in the study of the polynomial $\l$ itself, independent on $K\langle x_1, \cdots, x_n\rangle$. For instance, the module $V_\l$ helps to decide the irreducibility of $\l$, to compute its similar polynomials and its irreducible {factors} or indecomposable {factors}, i.e., primary decompositions,  and to compute other invariants such as endomorphism rings, submodule lattices, etc. In particular, $V_\l$ is simple if and only $\l$ is irreducible. {More generally, if $\l=\p_1\cdots \p_l$ is a factorization of $\l$ as a product of irreducible polynomials $\p_1, \cdots, \p_l$, then $l$ is the length of $V_\l$ and $\{V_{\p_1}, \cdots, V_{\p_l}\}$ is the set of composition factors of $V_\l$. If $\g$ is another polynomial with nonzero constant term, then $V_\l$ and $V_\g$ are isomorphic $K\langle x_1, \cdots, x_n\rangle$-modules if and only if the $K\langle x_1, \cdots, x_n\rangle$-modules $K\langle x_1, \cdots, x_n\rangle/K\langle x_1, \cdots, x_n\rangle\l$ and $K\langle x_1, \cdots, x_n\rangle/K\langle x_1, \cdots, x_n\rangle \g$ are such.} Therefore $V_\l$ encodes the structure of $\l$. 
 
The module structure of  $V_\l$ is inspired by ideas coming from Leavitt  algebras, {which are universal with
respect to an isomorphism property between finite rank free modules \cite{leav1}.} This is not surprising, since free algebras and  Leavitt algebras  are tightly  connected by the concept of localization (see Section \ref{leat}). This means that even if these two algebraic structures are deeply different in view of their ring properties (for instance  Leavitt algebras are far {from being} domains), their module categories are closely related.
 Moreover, any free algebra can be seen naturally in two  ways as a subalgebra of a  Leavitt algebra. This fact reflects the decisive role of Leavitt algebras in the investigation of both right and left factorizations of polynomials. By symmetry it is enough to study right factorizations. Since  Leavitt algebras are {one-sided} B\'ezout rings, i.e. any finitely generated {right (or left)} ideal is principal, moving to  this larger setting it is possible to define the  \emph{generalized greatest common divisor}, shortly, the \emph{greatest common divisor}, for finitely 
 many polynomials $\l_i $ in $ K\langle x_1, \cdots, x_n\rangle$: it is  the  uniquely determined, up to scalars, polynomial  with {nonzero} constant term which is a generator of the left ideal of the extended Leavitt algebra generated by the $\l_i$'s.

 In this manner
we succeed to have an algorithm computing the greatest common divisor of polynomials and to divide polynomials in  $K\langle x_1, \cdots, x_n\rangle$ analogous to the Euclidean algorithm for $K[x]$. This will be a crucial tool for our theory. {Another basic notion in our treatment is that of similarity, which  is naturally inspired by the similarity of linear transformations. Namely, two elements are \emph{similar} if the corresponding factors by the left (right) ideals generated by them are isomorphic (see  \cite{c6}). We will show that two elements $\l$ and $\g$ with nonzero constant term are similar if and only if their  associated finite-dimensional modules $V_\l$ and $V_\g$ are isomorphic.}
 \vskip 0.25cm  
The article is organized as follows.
\vskip 0.25cm 
In Section \ref{prif} we state the main result, Theorem \ref{goal1}, that every irreducible polynomial with constant term in $n$ non-commutative variables defines a simple finite-dimensional module over  the free algebra of rank $n$, whence a finite-dimensional division algebra, opening several exciting related questions.  Consequently, every polynomial with nonzero constant term factors uniquely into products of irreducible up to units and similarity, in the sense that if  $\l=\d_1\cdots \d_s=\p_1\cdots \p_t$ where the $\d_i$'s and the  $\p_j$'s are irreducible, then $s=t$ and there exists a permutation $\s$ such that $\d_i$ is similar to $\p_{\s(i)}$, even if $\l$ is no longer equal to a permutated product $\prod\limits_{i=1}^s\p_{\s(i)}$. 
Since irreducibilty does not change under automorphisms, polynomials with zero constant term may become ones with nonzero constant after suitable substitutions. The feasibility of this trick is an open question whether {or not} 
the intersection of {nonunital} subalgebras of codimension 1 is 0. 

Section \ref{leat} develops shortly the theory of Leavitt algebras necessary for Theorem \ref{goal1} and presents the interaction between Leavitt algebras and free algebras in terms of  their module categories. The importance of Leavitt algebras is underlined by the fact that they contain the pair of free algebras $K\langle x_1, \cdots, x_n\rangle$ and $K\langle x^*_1, \cdots, x^*_n\rangle$ together with both the isomorphism and anti-isomorphism induced canonically by the assignment $x_i\mapsto x^*_i\, (i=1, \cdots, n)$.
This parallel reflects precisely both the left and the right factorizations of polynomials. Hence we deal mainly with the right factorizations of polynomials.

Section \ref{pro} is  devoted to prove Theorem \ref{goal1} with additional results and  consequences on the arithmetic of polynomials in non-commutative variables.

{Section \ref{fire} presents applications of Theorem \ref{goal1} to companion structures of polynomials with nonzero constant term, like their similarity classes, atomic factorizations etc. Some important problems concerning irreducible polynomials and finite-dimensional non-commutative division algebras are stated.}

{Section \ref{infirank} contains further problems, discusses primary decompositions of a polynomial as indecomposable decompositions of its module of remainders and shows that Theorem \ref{goal1} remains true when the investigated polynomial is considered as an element of a free algebra of an arbitrary rank. Hence the study of an individual polynomial via the modules of remainders is independent of the free algebra which contains it.}

\section{Atomic factorizations in free algebras}
\label{prif}

Throughout this work $K$ denotes an arbitrary commutative field, rings are unital $K$-algebras, and modules are unitary whence $K$-spaces, too. {When  we discuss to \emph{nonunital subalgebras}  of a given  $K$-algebra $A$,  we mean multiplicatively closed subspaces  without identity.} A ring $A$ is \emph{projective-free} if projective modules are free. {It is worth to note that being projective-free is left-right symmetric.} Two elements $a, b\in A$ are \emph{similar} (over $A$) if the left $A$-modules $A/Aa, A/Ab$ are isomorphic. The \emph{codimension} of a subspace is the dimension of its direct complement. The \emph{colength} of a submodule is the length of its factor module.

 A non-zero element of a ring $A$ is called
\emph{irreducible}, or {\emph{an atom} in Cohn's terminology \cite{c2}}, if it cannot be {written as
a product of two nonzero non-invertible elements}. This definition is clearly preserved by automorphisms. A factorization of an element as a product of irreducible ones is called an \emph{atomic factorization}. {Notice that in the non commutative setting irreducible and prime elements may be different. For example, the equality  $\l=(x_1x_2x_3+x_1+x_3)(1+x_2x_1)=(1+x_1x_2)(x_3x_2x_1+x_1+x_3)$ analyzed  in \cite{v1} shows that the irreducible element  $1+x_1x_2$ divides $\l$ but it divides neither $1+x_2x_1$ nor $x_1x_2x_3+x_1+x_3$. Indeed the non-commutativity requires  to take into account both the left and  the right division in the definition of primeness.  Moreover, although prime elements are trivially irreducible, their existence is not necessarily guaranteed. Conversely, even if irreducible elements are not necessary prime, their existence is ensured for instance by the existence of a degree function, as in the case of polynomials in non-commuting variables, where any non-invertible element admits at least one atomic factorization. Thus in general it makes more sense to look for atomic than prime factorizations.}
\vskip 0.25cm
A free $K$-algebra $\Lambda=\Lambda_n$ of rank $n$ determines uniquely $K$ via
$K=U(\Lambda)\cup \{0\}$ and $n={\rm gldim}\, (\Lambda/[\Lambda, \Lambda])$, where $U(A)$ is the unit group of the ring $A$ and $[A, A]$ is the commutator ideal. The equality $n={\rm gldim}\, (\Lambda/[\Lambda, \Lambda])$ is the famous syzygy theorem of Hilbert computing the global dimension of a commutative polynomial algebra in $n$ variables. However, ${\rm gldim}\, \Lambda_1=1$ holds obviously. 
If a free generating set $\{x_1, \cdots, x_n\}$ is designated, i.e $\Lambda=K\langle x_1, \cdots, x_n\rangle$, one gets a normal form for the elements in $\Lambda$. In other words, a free generating set gives a polynomial description for the elements of $\Lambda$. {The two-sided ideal} 
 $I=\sum\limits_{i=1}^n x_i\Lambda=\sum\limits_{i=1}^n \Lambda x_i$ is a nonunital free algebra on $\{x_1, \cdots, x_n\}$ which is also a free $\Lambda$-module of rank $n$ on each side. However, neither the set $\{x_1,\cdots, x_n\}$ nor $I$ can be recovered from $\Lambda$. In fact, any free generating set describes an automorphism of $\Lambda$ up to a permutation. {As we will see in the sequel,  for the purpose of our work the question whether there is an element of 
$\Lambda$ whose normal form with respect to any free generating set  is always a polynomial with  zero constant term is crucial. Notice that this  is equivalent to the question whether the intersection of  two-sided ideals  of codimension 1 in $\Lambda$ is trivial, that is, 0. Maximal two-sided ideals of codimension 1 in $\Lambda$ are exactly nonunital subalgebras of codimension 1, as it is easy to see.  Hence it is an open, interesting question whether the intersection of nonunital subalgebras of codimension 1 in $\Lambda$ is 0}.

In the sequel,   $\Lambda^*=\Lambda^*_n$ will denote the free algebra on the generating set  $\{x^*_i, \cdots, x^*_n\}$, so that $\Lambda=K\langle x_1, \cdots, x_n\rangle$ and  $\Lambda^*=K\langle x^*_1, \cdots, x^*_n\rangle$ with $I^*=\sum\limits_{i=1}^n \Lambda^*x^*_i= \sum\limits_{i=1}^n x^*_i\Lambda^*$. {Both the isomorphism and anti-isomorphism from $\Lambda$ onto $\Lambda^*$ induced by $x_i\mapsto x^*_i \,\,(i=1, \cdots, n)$ are called \emph{canonical}, making the presentation lucidly. Therefore an arbitrary monomial $x_{i_1}\cdots x_{i_l}$ goes to $x^*_{i_1}\cdots x^*_{i_l}$  $(x^*_{i_l}\cdots x^*_{i_1})$ under this canonical (anti)isomorphism.}

The free monoid $\bB$ on $\{x_1, \cdots, x_n\}$ provides the most natural $K$-basis of $\Lambda$. For $b=x_{i_1}\cdots x_{i_m}\in \bB$ of \emph{length} or \emph{degree}  $|b|=m$, monomials $h_b(l)=x_{i_1}\cdots x_{i_l}$ and $t_b(l)=x_{i_{l+1}}\cdots x_{i_m}$ are called the \emph{head} of length $l$ or the \emph{tail} of \emph{colength} (\emph{codegree}) $l$ of $b$, respectively. We {define}  
  $h_b(0)=1=t_b(|b|)$ and $h_b(|b|)=b=t_b(0)$. Thus $1\in \Lambda$ is an empty monomial of length 0. The \emph{transpose} 
${^\tau}b$ of $b$ is $x_{i_m}\cdots x_{i_1}$. 
The \emph{(total) degree} $\deg \l$ of $\l \in \Lambda$, denoted also by  $|\l|$, is the greatest length of its monomials. Hence $|0|=-\infty$ and $|k|=0$ for $0\neq k\in K$.   One can write each
 $\l\in \Lambda$ uniquely in the form
\begin{align}\label{eq1}
\l=k+\sum\limits_{i=1}^n x_i\l_{x_i} \quad (k\in K; \, \l_{x_i} \in \Lambda).
\end{align}
The polynomial $\sum x_i\l_{x_i}=\underline{\l}$ is called the \emph{main part}  of $\lambda$
while, according to Cohn \cite{c3}, $\l_{x_i}$ is the \emph{right cofactor}  
of \emph{colength} 1 with respect to $x_i\, (i=1, \cdots, n)$. {Moreover, a monomial $b$ in the normal form \eqref{eq1} of $\l$ is called a \emph{maximal monomial of} $\l$ if it is not a proper head of another monomial of $\l$ in \eqref{eq1}.} The polynomial $\l$ is \emph{with} or \emph{without constant} according to $k\neq 0$ or $k=0$, respectively. An element $\lambda$ with $k=1$ is also called a \emph{comonic polynomial}.

Right cofactors $(\l_{x_i})_{x_j}$ of $\l_{x_i}$, denoted as $\l_{x_ix_j}$,   are called \emph{cofactors of colength} 2 of $\l$, and so on until getting nonzero constants. In other words, if $b=x_{i_1}\cdots x_{i_l}$ is a monomial and if $b_1, \cdots, b_m$ are the monomials of $\l$ having the head $b$ with coefficient $k_1, \cdots, k_m$, respectively, then the right cofactor $\l_b$ of colength $l=|b|$ defined by $b$ is  
\begin{align}\label{eq2}
\l_b=\left (\l_{x_{i_1}}\right )_{x_{i_2}\cdots x_{i_l}}=\cdots=\sum\limits_{i=1}^m k_it_{b_i}(|b|).
\end{align}

Cofactors $\l_b$ are defined only for heads $b$ of maximal monomials in $\l$ and their constant term may be 0. Hence for convenience we make the convention 
that cofactors of constants are
0 and put $\l_b=0$ if $b$ is not a head of any maximal monomials of $\l$. Consequently, forming the cofactor with respect to $x_i$ provides a {$\star$-action of $\Lambda$ on itself, that is, $x_i\star \l=\l_i$ for every $\l \in \Lambda$}, which behaves like a generalized differentiation on the axis $x_i$ or a {truncated partial inverse} to the direction $x_i$. In fact, this action makes $\Lambda$ a left $\Lambda$-module which is an essential extension of its unique minimal trivial $\Lambda$-submodule $K$.
Hence we have
\begin{proposition}\label{leavittes0} Forming right cofactors with respect to variables
$x_i \, (i=1, \cdots, n)$ makes $\Lambda$ a left module over itself, denoted as $\Lambda_\partial$, 
which is an essential extension of its unique minimal trivial submodule $K$.
\end{proposition}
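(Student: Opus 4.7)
The plan is to realize the $\star$-action via the universal property of the free algebra, identify $K$ as the submodule annihilated by the augmentation ideal, and exhibit a nonzero scalar inside any nonzero submodule by acting with a suitable transposed monomial.

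First, since $\L=K\langle x_1,\dots,x_n\rangle$ is free as a $K$-algebra on $\{x_1,\dots,x_n\}$, to equip $\L$ with a left $\L$-module structure it suffices, by the universal property, to specify $K$-linear endomorphisms $\partial_i\in\End_K(\L)$; I set $\partial_i(\l):=\l_{x_i}$, which is $K$-linear by the uniqueness of \eqref{eq1}. The resulting $K$-algebra homomorphism $\rho:\L\to\End_K(\L)$ with $\rho(x_i)=\partial_i$ gives the action $\l'\star\l:=\rho(\l')(\l)$; in particular $x_i\star\l=\l_{x_i}$, and since $\rho$ is $K$-linear, scalars $k\in K$ act by multiplication, $k\star\l=k\l$. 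A short induction on $|b|$ unfolds this to the identity
$$({^\tau}b)\star\l=\l_b \qquad \text{for every monomial } b\in\bB \text{ and every } \l\in\L,$$
where ${^\tau}b$ is the transpose of $b$ and $\l_b$ is the cofactor defined in \eqref{eq2}.

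Second, $K$ is a trivial submodule of $\L_\partial$: for $k\in K$ the main part in \eqref{eq1} vanishes, so $x_i\star k=0$ for every $i$, and combined with $k'\star k=k'k\in K$ this shows that $K$ is closed under the action and is annihilated by the augmentation ideal $I=\sum x_i\L$. Conversely, if $0\neq t\in\L$ satisfies $x_i\star t=t_{x_i}=0$ for all $i$, the uniqueness in \eqref{eq1} forces $t\in K$; hence $K=\ann_{\L_\partial}(I)$, and being one-dimensional over $K$ it is a simple $\L$-module.

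For essentiality, let $M$ be a nonzero $\L$-submodule of $\L_\partial$ and pick $0\neq\l\in M$. Choose a monomial $b$ of maximal length $|b|=|\l|$ appearing in the expansion of $\l$ with nonzero coefficient $a_b$. The displayed identity gives $({^\tau}b)\star\l=\l_b$; since $b$ has maximal length, no longer monomial of $\l$ has $b$ as head, and by \eqref{eq2} the cofactor $\l_b$ collapses to the nonzero scalar $a_b$. Thus $0\neq a_b\in M\cap K$, and simplicity of $K$ forces $K\subseteq M$; so every nonzero submodule contains $K$, $K$ is the unique minimal (trivial) submodule, and $\L_\partial$ is an essential extension of $K$. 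The delicate point, and the most likely pitfall, is the order-reversal in $({^\tau}b)\star\l=\l_b$: the left action unfolds the product of the $x_i$'s from the right, so it is the transpose of $b$ and not $b$ itself whose action extracts the cofactor $\l_b$.
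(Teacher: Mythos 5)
Your proof is correct and takes essentially the same approach as the paper, which likewise establishes essentiality by acting with the transpose of a suitably chosen monomial to extract its nonzero scalar coefficient. The only cosmetic differences are that the paper selects an arbitrary \emph{maximal monomial} (one that is not a proper head of another monomial of $\l$) where you select one of maximum total degree, and that your explicit appeal to the universal property of the free algebra makes the well-definedness of the $\star$-action clearer than the paper's tacit assertion.
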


\begin{proof} {Let $\l \in \Lambda$ be an arbitrary proper polynomial, i.e., $|\l|\geq 1$. Then for a maximal monomial $b$ of $\l$ which has a nonzero coefficient $k_b\in K$, one has $^{\tau}b\star \l=\l_b=k_b\neq 0$. This shows that $K$ is the smallest essential submodule of $\Lambda_\partial$, completing the proof.}
\end{proof}

Notice that Proposition \ref{leavittes0} could be a starting point in the investigation of injective envelopes of finite-dimensional modules over free algebras.

{For example,} Proposition \ref{leavittes0} shows that when $\Lambda=\Lambda_1=K[x]$, forming cofactors defines a module structure on $K[x]$, namely, the module  $K[x]_\partial=\Lambda_\partial$ which is the injective hull of the simple trivial module $K$ independently on the characteristic of $K$. By this reason $\Lambda_\partial$ can be called  \emph{generalized directional Leavitt module}.

{It turns out that given any polynomial $\l \in \Lambda$ with nonzero constant term, one can modify the $\star$-action defined above to obtain a powerful tool for the study of $\l$ itself. Namely,} any polynomial $\l=k_\l+\underline{\l}$ such that $ 0\neq k_\l\in K$, makes $\Lambda$ a left module over itself, denoted by $\Lambda_\l$, via linear transformations induced by putting
\begin{align}\label{de1}
x_i\star_\l 1=-k^{-1}_\l \l_{x_i} \quad \&  \quad
  x_i\star_\l b=\begin{cases}  b_{x_i}=t_b(1) & \text{if}\qquad x_i=h_b(1),\\ 0& \text{if}\qquad x_i\neq h_b(1)\end{cases}
\end{align}
 for  $i=1, \cdots, n$ and $1\neq b\in \bB$. Thus $x_i\star_\l 1=0$ holds if $x_i$ is not a head of any monomial of $\l$, or equivalently, $\l_{x_i}$ is not defined, that is, $\l_{x_i}=0$ by our convention. Since $x_i\star_\l \l=x_i\star_\l k_\l+\l_i=0$ holds for all indices $i$, $K\l$ is isomorphic to the trivial $\Lambda$-module. Furthermore, for any $\g=k_\g+\underline{\g} \in \Lambda$ one has $x_i\star_\l \g \l=x_i\star_\g (k_\g\l+\underline{\g}\l)=\g_i\l$ whence $\Lambda \l$ is a submodule of $\Lambda_\l$.
In fact, $\Lambda\l$ contains the smallest trivial submodule $K\l$ and it is isomorphic to the generalized directional Leavitt module $\Lambda_\partial$ via the map $\g\l \mapsto \g$.

Notice that if   $\l$ is comonic then for any $\g=k+\sum x_i\g_{x_i}\in \Lambda$,  where $k\in K$,  we have  $x_j\star \g=-k\l_{x_j}+\g_{x_j}$. 

\medskip

{We are now in position to define one of the most important objects in our investigation.}
\begin{definition}\label{de2}
Let $\l \in\Lambda$ be a polynomial with nonzero constant term. Then $V_{\l}$ is  the $K$-subspace of $\Lambda$ generated by all cofactors of $\l$.
\end{definition}

{ In particular,   $ 0\neq \l \in K$ implies  $V_\l=0$ as well as  $|\l|=1$ implies  $V_\l=K$. If $|\l|\geq 1$,  then  $V_\l$ has finite dimension. {For example, if $\l=1+x^2_1+x_1x_2+x^2_2x_3+x_3x_2$, then $1, x_1+x_2, x_2x_3, x_2, x_3$ are cofactors of $\l$ and so $V_\l$ has dimension 5 with a basis $\{1, x_1, x_2, x_2x_3, x_3\}$. For $\l=1+x_1+x_2+x^2_1+x_1x_2+x_2x_1+x^2_2$, then $\{1, 1+x_1+x_2\}$ is the set of all cofactors of $\l$ and $V_\l$ has dimension 2 with a basis $\{1, x_1+x_2\}$. It is a challenging and exciting problem to determine or to estimate the dimension of $V_\l$. }}

{\begin{proposition}
Let  $\l \in\Lambda$ be a polynomial with nonzero constant term. Then $V_{\l}$ is a left $\Lambda$-submodule of $\Lambda_\l$.
\end{proposition}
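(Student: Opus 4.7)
The plan is to check that $V_\l$ is closed under the $\star_\l$-action of each $K$-algebra generator of $\Lambda$. Since $V_\l$ is by definition a $K$-subspace, it is enough to verify that $x_i\star_\l \l_b$ lies in $V_\l$ for each variable $x_i$ and each cofactor $\l_b$ (with $|b|\geq 1$) in the spanning set; the $K$-linearity of the action and the fact that $\Lambda$ is generated as a $K$-algebra by $\{x_1,\dots,x_n\}$ then yield the general claim.

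For the key calculation I would write $\l_b$ in its normal form \eqref{eq1},
$$\l_b=k_b+\sum_{j=1}^n x_j (\l_b)_{x_j},\qquad k_b\in K,$$
and use the iteration rule $(\l_b)_{x_j}=\l_{bx_j}$ built into \eqref{eq2} to rewrite this as $\l_b=k_b+\sum_j x_j \l_{bx_j}$. Applying $x_i\star_\l$ monomial by monomial according to \eqref{de1}, the constant summand $k_b\cdot 1$ contributes $-k_b k_\l^{-1}\l_{x_i}$, while a monomial $x_j c$ appearing inside $x_j\l_{bx_j}$ contributes $c$ exactly when $j=i$ and $0$ otherwise. Reassembling the surviving pieces over all such monomials recovers $\l_{bx_i}$ itself, yielding the clean identity
$$x_i\star_\l \l_b=\l_{bx_i}-k_b k_\l^{-1}\l_{x_i}.$$

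The right-hand side is manifestly in $V_\l$: both $\l_{x_i}$ and $\l_{bx_i}$ are cofactors of $\l$, with the convention stated just after \eqref{eq2} taking care of the case in which $bx_i$ is not the head of any maximal monomial of $\l$ (so that $\l_{bx_i}=0$). The degenerate case where the cofactor $\l_b$ happens to be a nonzero constant is subsumed by the same identity with $\l_{bx_i}=0$. I expect the main obstacle to be essentially bookkeeping: the $\star_\l$-action is defined on the monomial basis $\bB$ via \eqref{de1}, whereas cofactors are polynomials, so one has to be mildly careful when translating between the two viewpoints. The identity $(\l_b)_{x_j}=\l_{bx_j}$ is precisely what makes this translation painless and turns the verification into a one-line computation.
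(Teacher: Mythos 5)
Your argument is correct and follows the same route as the paper, which simply asserts that $x_i\star_\l \l_b$ is a linear combination of cofactors of $\l$; you have just made that linear combination explicit as $x_i\star_\l\l_b=\l_{bx_i}-k_bk_\l^{-1}\l_{x_i}$, using the iteration rule $(\l_b)_{x_i}=\l_{bx_i}$ and the convention that $\l_c=0$ when $c$ is not a head of a maximal monomial. This is precisely the computation the paper's one-line proof is gesturing at, so the two proofs coincide in substance.
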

\begin{proof}
It follows directly  from the definition of the   $\star_\l$-action, since if $\l_b$ is a cofactor of $\l$, then $x_i\star_\l \l_b$ is  a linear combination of cofactors of $\l$.
\end{proof}}

{For instance, consider $\l=1+x^2_1+x_1x_2+x^2_2x_3+x_3x_2$ and  $V_\l=\langle 1, x_1+x_2, x_2x_3, x_2, x_3\rangle$, as before. Then $x_1\star_\l 1=-x_1-x_2$, $x_1\star_\l (x_1+x_2)=1$,  $x_1\star_\l x_2x_3=0$, $x_1\star_\l x_2=0$, $x_1\star_\l x_3=0$. Similar computations for $x_2$ and $x_3$ instead of $x_1$.}

Finally notice that  $V_\l$ and  $\Lambda\l$ are both submodules of $\Lambda_\l$ with trivial intersection, and it is straightforward to  see that if $\g\in V_\l$ then $V_\g\subseteq V_\l$. 
 
\begin{remark}\label{leftright} One can define dually \emph{left cofactors} of $\l \in \Lambda$, and so  left cofactors of higher colength,   by using the normal form
\begin{align}\label{eq20}
\l=k+\sum\limits_{i=1}^n {_{x_i}\l}x_i \quad (k\in K; \, _{x_i}\l \in \Lambda).
\end{align}
Consequently, one obtains that $\Lambda$ admits a right $\Lambda$-module structure with respect to the right $\star$-action by forming left cofactors, that is, for every
$b \in \bB $ and $x_i\, (i\in \{1, 2, \cdots, n\})$ one has
\begin{align}\label{eq21}
  b\star x_i=\begin{cases} h_b(|b|-1) & \text{if}\qquad x_i=t_b(|b|-1),\\ 0& \text{if}\qquad x_i\neq t_b(|b|-1)\end{cases}.
\end{align} 
In particular, $x_i\star k=0$ for every scalar $k\in K$. 
We denote this module structure of $\Lambda$ as $_\partial\Lambda$.

Furthermore, any polynomial  $\l=k_\l+\sum\limits_{i=1}^n {_{x_i}}\l x_i, \, 0\neq k_\l\in K$, makes $\Lambda$ a right $\Lambda$-module, denoted by $_\l\Lambda$, by putting
\begin{align}\label{de1}
1 \star_\l x_1=- {_{x_i}\l k^{-1}_\l}  \,\,\,\,  \&  \,\,\,\,
  b  \star_\l  x_1 =\begin{cases}  h_b(|b|-1) & \text{if}\quad x_i=t_b(|b|-1),\\ 0& \text{if}\quad x_i\neq t_b(|b|-1)\end{cases}
\end{align}
 for  $i=1, \cdots, n$ and $1\neq b\in \bB$. The vector subspace $W_\l$  generated by the right cofactors of $\l$ becomes
 a finite-dimensional right $\Lambda$-submodule of $_\l\Lambda$. We will need both $V_\l$ and $W_\l$ to compute atomic factorizations of $\l$ in Subsection \ref{fac}.
\end{remark}

With the notation above, the main goal of this work is 
\begin{theorem}\label{goal1} Let $\g, \l \in \Lambda$ be {comonic} polynomials of  positive degree and  $V_\g$ and $V_\l$ the finite-dimensional left $\Lambda$-modules   with respect to the $\star$-action defined by $\g$ and  $\l$, respectively. Then:
\begin{enumerate}
\item $\l$ is an irreducible polynomial if and only if $V_\l$ is a simple $\Lambda$-module. 
\item If $\l=\p_1\cdots\p_m$ is a factorization of $\l$ into a product of irreducible polynomials, then $m$ is the length of $V_\l$ and its composition factors are isomorphic to the simple modules $V_{\p_i}$, $i=1, \cdots m$. In particular $m$ is an invariant of $\l$.
\item $V_\l\cong V_\g$ if and only if  $\Lambda/\Lambda\g \cong \Lambda/\Lambda\l$, that is, $\g$ and $\l$ are similar over $\Lambda$.
\end{enumerate} 
\end{theorem}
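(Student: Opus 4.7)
The crux is a short exact sequence
\[
0 \to V_\a \xrightarrow{\,\cdot\b\,} V_\l \to V_\b \to 0
\]
associated to a factorization $\l = \a\b$ in which $\a$ and $\b$ are both comonic (always achievable when $\l$ is, by rescaling $\a$ and $\b$ by $k_\a^{-1}$ and $k_\a$). I would first establish the cofactor identity $\l_{x_i} = \a_{x_i}\b + \b_{x_i}$ by direct expansion, then iterate it to express every cofactor of $\l$ as an element of $V_\a\b + V_\b$, so that $V_\l = V_\a\b + V_\b$. A degree argument (any nonzero $v \in V_\a$ satisfies $\deg(v\b) \ge |\b|$, while every $w \in V_\b$ has $\deg w < |\b|$) forces $V_\a\b \cap V_\b = 0$, giving $V_\l = V_\a\b \oplus V_\b$ as $K$-spaces. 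The module identity
\[
x_i \star_\l (v\b) = (x_i \star_\a v)\,\b \qquad (v \in V_\a),
\]
obtained by combining the cofactor identity with $k_\l = k_\a k_\b = 1$, then shows that right-multiplication by $\b$ is a $\Lambda$-linear injection $V_\a \to V_\l$ with image a submodule, and that the complementary projection realizes $V_\l/V_\a\b \cong V_\b$ as $\Lambda$-modules.

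With this tool in hand, I would prove (1) and (2) jointly by induction on $|\l|$. Given an atomic factorization $\l = \p_1\cdots\p_m$, iterating the short exact sequence along the groupings $\l = \p_1 \cdot (\p_2\cdots\p_m)$ produces a filtration of $V_\l$ with successive quotients $V_{\p_1},\dots,V_{\p_m}$; each is simple by the inductive step of (1), so this is a composition series of length $m$, establishing (2) together with the length statement. Part (1)($\Leftarrow$) is then immediate from the sequence: any non-trivial factorization $\l = \a\b$ forces $\a, \b$ to be comonic (since $k_\l = 1$) and non-constant, so $V_\a\b$ is a nonzero proper submodule of $V_\l$.

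The main obstacle is (1)($\Rightarrow$): from $\l$ irreducible deduce $V_\l$ simple. Starting from a proper nonzero submodule $N \subsetneq V_\l$, my plan is to invoke the one-sided B\'ezout / Leavitt-algebra machinery from Section \ref{leat}. The preimage of $N$ under the cyclic surjection $\Lambda \to V_\l$, $\g \mapsto \g \star_\l 1$, is a left ideal $I$ with $\Lambda\l \subsetneq I \subsetneq \Lambda$; its extension $\Leo I$ in the Leavitt overring (which is one-sided B\'ezout) is principal, and the generalized greatest-common-divisor construction of Section \ref{leat} yields a comonic $\mu \in \Lambda$ with $\l \in \Lambda\mu \subsetneq \Lambda$. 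This produces a non-trivial right factorization $\l = \nu\mu$ contradicting irreducibility. The delicate point is ensuring that the Leavitt-extracted divisor $\mu$ really lives in $\Lambda$ and is a genuinely proper, non-constant factor.

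For (3), the plan is to identify $V_\l$ functorially with a canonical subquotient of $\Lambda/\Lambda\l$ through the localization $\Lambda \subset \Leo$ described in Section \ref{leat}. Under that identification, any isomorphism $\Lambda/\Lambda\l \cong \Lambda/\Lambda\g$ transports to $V_\l \cong V_\g$ by functoriality; conversely, a $\Lambda$-linear isomorphism between the $V$'s is automatically $\Leo$-linear (both being modules over the localization) and lifts back through the same equivalence to an isomorphism $\Lambda/\Lambda\l \cong \Lambda/\Lambda\g$. The main conceptual point here is pinning down the functorial identification $V_\l \leftrightarrow \Lambda/\Lambda\l$ precisely enough to invoke the localization equivalence; once that is in place, both implications become essentially formal.
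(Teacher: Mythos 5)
Your short-exact-sequence idea
\[
0 \to V_\a \xrightarrow{\,\cdot\b\,} V_\l \to V_\b \to 0
\]
for a comonic factorization $\l=\a\b$ is genuinely different from the paper's route: the paper never constructs this sequence at the level of the $V$'s, but instead works in $M_\l=\Leo/\Leo\l$ via the chain $\Leo\supset\Leo\d_l\supset\cdots\supset\Leo\l$ (Corollary \ref{unique1}) and only afterward transports to $V_\l\cong\Lambda^*/L_\l$. If your sequence can be rigorously established it gives a cleaner, more self-contained treatment of parts (1)($\Leftarrow$) and (2). However there are two concrete problems.

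First, you only prove the inclusion $V_\l\subseteq V_\a\b+V_\b$ (by iterating the cofactor identity) and then assert equality. The reverse inclusion is not automatic: the cofactor recursion gives $\l_b=\a_b\b+\mu_b$ with $\mu_b$ a combination of $\b$-cofactors weighted by constants $k_{\a_{h_b(j)}}$, and extracting $\a_b\b$ or $\b_c$ individually from the $\l_b$'s requires a separate (downward) induction on degree that you do not carry out. Without it, the map $V_\a\to V_\l$ might only land in a proper subspace, and exactness at the right end is not guaranteed. This can be repaired, but it is a genuine gap as written.

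Second, and more seriously, your plan for (1)($\Rightarrow$) rests on the claim that the cyclic surjection $\Lambda\to V_\l$, $\g\mapsto\g\star_\l 1$, has kernel $I$ with $\Lambda\l\subsetneq I$. This is false. The $\star_\l$-action of $\Lambda$ on $V_\l$ realizes $V_\l$ as a $\Lambda^*$-module (multiplication by $x_i^*$ in $\Leo$), so the annihilator of $1\in V_\l$ corresponds to $L_\l=\Lambda^*\cap\Leo\l$ under the canonical star-omitting isomorphism (Lemma \ref{pre4}, Corollary \ref{iso1}), not to $\Lambda\l$. Concretely, for $\l=1+x_1x_2$ one computes $\l\star_\l 1=1+x_1\star_\l(x_2\star_\l 1)=1+0=1\ne 0$, so $\l\notin\ker$. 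Your preimage $I$ therefore does not sit between $\Lambda\l$ and $\Lambda$, and the subsequent claim $\l\in\Lambda\mu$ does not follow as stated. To make the argument work you must replace $\Lambda\l$ by $L_\l$ and work inside $\Lambda^*\subseteq\Leo$; one then also has to handle the delicate point that a left ideal $J$ with $L_\l\subsetneq J\subsetneq\Lambda^*$ could a priori satisfy $\Leo J=\Leo$ (when $J$ contains a power of $I^*$). The paper rules this out by the comonicity of $\l$: for a maximal monomial $b$ of $\l$, $b^*\l\in L_\l\subseteq J$ maps to a unit in $\Lambda^*/(I^*)^m$, forcing $J=\Lambda^*$, a contradiction (proof of Corollary \ref{step1}(2)). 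This is precisely the subtlety you flagged as ``the delicate point'' but did not resolve.

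For (3), your outline is aligned with the paper, but note the paper's Corollary \ref{sim0} has to do extra work: one must show that $\Lambda^*/L_\l$ has length exactly $m$ (no superfluous trivial composition factors appear when passing between $\Lambda^*/L_\l$ and $\Leo/\Leo\l$), and this again hinges on the comonicity argument with maximal monomials. Your appeal to ``functoriality and the localization equivalence'' is in the right direction but does not address this length-counting issue.
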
 
$\End(V_\p)$ is called the \emph{(division) finite-dimensional algebra} associated to a  {comonic} (irreducible) polynomial $\p\in \Lambda$. Theorem \ref{goal1} suggests a question of central
importance, { namely,} to characterize comonic irreducible polynomials $\l$ such that $\End_\Lambda( V_\l)$ is non-commutative. We shall discuss  polynomials with zero constant term in Sections \ref{pro} and \ref{fire}.

{Theorem \ref{goal1} suggests how to extend the concept "divisibility" and hence "primeness" in the non-commutative setting. A nonzero, non-invertible element $\d$ \emph{divides} another element $\l$ if $\l$ admits a factorization $\l=\a \b \g$ such that $\d$ is one of the $\a$, $\b$ or $\g$. A nonzero, non-invertible element $\p$ is called \emph{prime} if $\p$ has the property: $\p$ divides a product $\a \b$ if and only if there is a polynomial $\d$ such that $\d$ divides
either $\a$ or $\b$ and $\d$ is in addition similar to $\p$. Then Cohn's Theorem III.3.2 \cite[1st ed.]{c2} shows for polynomials, even more generally, for elements in atomic 2-firs, that $\p$ is prime if and only if it is irreducible.} 
\begin{remark}\label{nre1} As we will see in Section~\ref{pro}, the definition of the $\star$-action in $\Lambda=\Lambda_n$ is inspired by the multiplication by the elements  $x_i^*\, (i=1, \cdots, n)$ in the associated Leavitt algebra $L(1, n)$. In case of $n=1$, so that $\Lambda_1=K[x]$, the extended Leavitt algebra is the algebra of Laurent polynomials $K[x, x^{-1}]$ and $x^*$ can be seen as $x^{-1}$. Assume $\l=1+k_1x+\cdots+k_lx^l$ is a comonic  polynomial in $K[x]$, so that  $\l=1+x\l_x$ and $1=-x\l_x+\l$.  If $g=k+xg_x$ is any polynomial in $K[x]$, then $x\star_\l g=-k\l_x+g_x= x^{-1}\cdot g-kx^{-1}\l$, and  hence the $\star_\l$-action  coincides with the multiplication by $x^{-1}$ modulo $K[x, x^{-1}]\l$. Consequently, using the $\star_\l$-action on $K[x]$,  one can see that $g$ is a multiple of $\l$, i.e. $g\in K[x]\l$, if and only if $x^{|g|}\star_\l g=0$.
Moreover, denoting by  
$$\g=x^l+k_1x^{l-1}+\cdots +k_l=x^l\l(x^{-1})$$ 
the reciprocal polynomial of $\l$, we get $K[x, x^{-1}]\l=K[x, x^{-1}]\g(x^{-1})$ establishing a one-to-one correspondence between remainders modulo $\l$ in $K[x]$ and $\g(x^{-1})$ in $K[x^{-1}]$, respectively. 

$V_\l$ is clearly the $K$-subspace of $K[x]$ spanned by $\{1, x, \cdots, x^{l-1}\}$  and it is clearly closed under the $\star_\l$-action induced by the rule $x\star_\l 1=-\l_x$ (so that, for instance, $x\star_\l x=1$). In particular, $V_\l$ is isomorphic to the  $K[x^{-1}]$-module $K[x^{-1}]/K[x^{-1}]\g(x^{-1})$ with respect to the $K$-algebra isomorphism $K[x]\rightarrow K[x^{-1}]:x\mapsto x^{-1}$.
So $V_\l=\Lambda\star_\l 1$  
is a cyclic module and the annihilator of $1\in V_\l$ is the ideal $K[x]\g$. Indeed,  from $x\star_\l x=1$ it follows that $k_ix^{l-i+1}\star_\l 1=x^{l-1}\star_\l k_ix^{i-1}$ and 
$$\g \star_\l 1=(x^l+k_1x^{l-1}+\cdots +k_l)\star_\l 1=x^l\star_\l 1+x^{l-1}\star_\l \l_x=$$
$$=x^{l-1}\star_\l (-\l_x+\l_x)=x^{l-1}\star_\l 0=0.$$
Hence in general $V_\l$ is not isomorphic to $K[x]/K[x]\l$ even when $\l$ is an irreducible polynomial.

In case $\l$ is irreducible, the ideal $K[x]\g$ is maximal, and hence $V_\l$ is isomorphic to the irreducible representation $K[x]/K[x]\g$.
Therefore, in case of one variable, Theorem~\ref{goal1} provides a parameterization of the irreducible representations of $K[x]$ alternative to the classical one. Namely, one uses the reciprocical $\g$ of an irreducible polynomial $\l$ for this job.

To conclude Remark \ref{nre1} notice that $V_\l$ can be identified  with the space of remainders by the Euclidean division by $\l$. In the classical approach using the factor algebra $K[x]/K[x]\l$, in order to compute the remainder of $g$ by $\l$  without carrying over the division, one has to substitute $x^l$ by $(-k_0-k_1x-\cdots-k_{l-1}x^{l-1})$. Unfortunately this approach is not extendable to the case of several non-commutative variables. However, using the module  $V_\l$ and the $\star_\l$ action, in order to find the remainder  of  $g$ by $\l$, we can compute recursively   $x\star_\l g$ until we get an element in $V_\l$. As we will see in {Section \ref{pro} where the proof of Theorem \ref{goal1} is presented,} this approach can be generalized for several non-commutative variables {and is, in fact, the crucial idea of the proof.}
\end{remark}

From now on, let us  assume  $n\geq 2$. Then Theorem \ref{goal1} shows a relatively transparent and simple way assigning to every proper and comonic  polynomial $\l$  a finite-dimensional $\Lambda$-module $V_\l$ whose endomorphism ring is a finite-dimensional algebra. $V_\l$ plays  the role of "remainders" in a generalized Euclidean division algorithm by the comonic polynomial $\l$. This approach provides also an easy algorithm to compute the remainder of a polynomial $\m$ at the division by $\l$, and then to compute the unique expression $\m=\eta\l+\r$ with $\r\in V_\l$, as we will see in Section~\ref{pro}. Our treatment simplifies and sharpens Cohn's approach of atomic factorizations discussed in \cite{c2}, \cite{c4}.

\section{The tool: Leavitt algebras}
\label{leat}
By definition (cf. \cite{leav1}) the
\emph{module type} of a ring is either \emph{IBN}, an abbreviation for  \emph{invariant basis number}, that is, an isomorphism between modules generated freely by $m$ and $n$ elements, respectively, implies $m=n$; or the first integral pair $(m,n)\, (1\leq m\leq n-1)$ such that modules generated freely by $m$ and $n$ elements, respectively, are isomorphic. This definition implies immediately the following obvious but important property of rings without IBN.
\begin{proposition}\label{mfir} If free modules of rank $m$ and $n\geq m+1$, respectively, are isomorphic, then any finitely generated module can be generated by at most $m$ elements. In particular, a ring  of  module type $(1,n), \,\,  n\geq 2$ is a Bezout ring, i.e., every finitely generated one-sided ideal is principal.
\end{proposition}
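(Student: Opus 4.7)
The plan is to leverage the isomorphism $A^n\cong A^m$ between free modules of ranks $n>m$ to surject $A^m$ onto an arbitrary finitely generated module. The central observation is that the hypothesis bootstraps itself: writing $A^n=A^m\oplus A^{n-m}$, the isomorphism $A^n\cong A^m$ reads
\[
A^m\oplus A^{n-m}\cong A^m,
\]
and iterating this identification in the first summand yields $A^m\cong A^{m+j(n-m)}$ for every $j\geq 0$. I would prove this by a one-line induction on $j$.

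Given an arbitrary finitely generated module $M$ with a fixed surjection $A^k\twoheadrightarrow M$, I would then choose $j$ large enough so that $m+j(n-m)\geq k$, which is possible because $n-m\geq 1$. Projecting $A^{m+j(n-m)}$ onto its first $k$ coordinates produces a surjection $A^{m+j(n-m)}\twoheadrightarrow A^k$, and concatenating gives
\[
A^m\;\cong\;A^{m+j(n-m)}\;\twoheadrightarrow\;A^k\;\twoheadrightarrow\;M,
\]
which exhibits $M$ as generated by at most $m$ elements, proving the first assertion.

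For the B\'ezout consequence I would simply specialize to $m=1$: a finitely generated one-sided ideal $I\subseteq A$ is in particular a finitely generated $A$-module, so by what has just been proved it is generated by a single element, hence principal.

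I do not foresee a serious obstacle. The only mild point to verify with care is that the isomorphism $A^m\cong A^{m+j(n-m)}$ follows solely from the given isomorphism $A^m\cong A^n$ without invoking any cancellation property, which is why we must build it up by iteratively splitting off $A^{n-m}$ rather than by any form of unique rank.
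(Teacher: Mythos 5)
Your argument is correct and is precisely the ``immediate'' argument the paper alludes to without spelling out: iterating $A^m\cong A^m\oplus A^{n-m}$ gives $A^m\cong A^{m+j(n-m)}$ for all $j$, then one projects onto a free module of any desired finite rank and composes with a surjection onto $M$. The specialization $m=1$ for the B\'ezout claim is handled exactly as you say, so there is nothing to add.
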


For the existence of rings of  arbitrary module type $(m,n)$, Leavitt invented the algebra $\Lem$ by inverting the matrix $(x_{ij})$ over the free algebra $\Lambda=K\langle x_{ij}\rangle \, (i=1, \cdots, m; j=1, \cdots, n)$ \cite{leav1}.  Although $\Le$ is now somewhat well-understood, $\Lem\, (2\leq m\leq n-1)$ 
is still a big puzzle.

\medskip

We present below an elementary and transparent approach to $\Le$, shortly $\Leo$, by listing results in such an order that their proof is immediate. For the details we refer to \cite{as}. 

By definition $\Leo$ is the localization of $\Lambda=K\langle x_1, \cdots, x_n\rangle=\Lambda_n$ inverting the row $(x_1, \cdots, x_n)$, i.e., $\Leo$ is generated  by elements  $x_i$ and  $x^*_i\, (i=1, \cdots, n)$ subject to relations 
\begin{align}\label{de3}
\su x_ix^*_i=1\qquad \& \qquad   x^*_jx_i=\begin{cases} 1& \text{if}\qquad j=i,\\ 0& \text{if}\qquad j\neq i\end{cases}.
\end{align} 
The images of $x_i, x^*_i$ in $\Leo$ are also denoted by $x_i, x^*_i$ themselves by convenience and by the fact that free algebras $\Lambda$ and $\Lambda^*$ are isomorphic canonically to the $K$-subalgebras $\bar \Lambda$ and ${\bar \Lambda}^*$ of $\Leo$ generated by $\{x_1, \cdots, x_n\}$ and $\{x^*_1, \cdots x^*_n\}$, respectively. This result is immediate by Theorem \ref{quo1} from which almost all basic properties of $\Leo$ can be read off easily. By putting
$b^*=x^*_{i_l}\cdots x^*_{i_1}$ for $b=x_{i_1}\cdots x_{i_l}\in \bB$, every element $r\in \Leo$ can be expressed (not necessarily uniquely) as a linear combination $\sum k_ib_ic^*_i \,(0\neq k_i \in K; b_i, c_i\in \bB)$. To verify Theorem \ref{quo1} we need first

\begin{lemma}\label{sub1} For any $r\in \Leo$, $r\in ({\bar \Lambda} \cap r\Leo)\Leo$ and ${\bar \Lambda}\cap r\Leo$ is a finitely generated right ideal of $\bar \Lambda$.  In particular,  if  $\frak R$ is a right ideal of $\Leo$ and   $R=\bar\Lambda \cap \frak R$, then 
 $\frak R=R\Leo$. Moreover $R$ is a finitely generated  right ideal of $\bar\Lambda$ if and only if $\frak R$ is principal. By symmetry, $\frak L=\Leo(\bar\Lambda^*\cap \frak L)=\Leo L$ holds for every left ideal $\frak L$  of $\Leo$ and $L$ is a finitely generated left ideal of ${\bar\Lambda^*}$ if and only if ${\frak L}$ is principal.\end{lemma}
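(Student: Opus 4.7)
My plan rests on two simple consequences of the defining relations $\sum_{i=1}^{n} x_i x_i^{*}=1$ and $x_i^{*}x_j=\delta_{ij}$ of $\Leo$: the orthogonality $c^{*}c'=\delta_{c,c'}$ for monomials $c,c'\in\bB$ of the same length (proved by induction on the length), and the padding identity $\l c^{*}=\sum_{j=1}^{n}(\l x_j)(cx_j)^{*}$, which lengthens by one the star-tail of any summand. Since every element of $\Leo$ is a $K$-linear combination of products $bc^{*}$ with $b,c\in\bB$, grouping by the right factor and then iterating padding lets me write any $r\in\Leo$ as $r=\sum_{i=1}^{s}\l_i c_i^{*}$ with $\l_i\in\bar\Lambda$ and all $c_i$ of a common length $m$. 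Orthogonality then gives $r\cdot c_j=\l_j$, which simultaneously shows that each $\l_j\in\bar\Lambda\cap r\Leo$ and that $r=\sum_j\l_j c_j^{*}\in(\bar\Lambda\cap r\Leo)\Leo$.

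For a general right ideal $\frak R$ of $\Leo$ with $R=\bar\Lambda\cap\frak R$, the inclusion $R\Leo\subseteq\frak R$ is tautological, and the reverse is the first assertion applied element-wise to each $r\in\frak R$. One implication of the equivalence follows directly from Proposition~\ref{mfir}: if $R=\sum_{k=1}^{t}a_k\bar\Lambda$ then $\frak R=R\Leo=\sum_k a_k\Leo$ is a finitely generated right ideal of the one-sided B\'ezout ring $\Leo$, hence principal. The left-ideal half of the statement is then formal by the canonical anti-isomorphism exchanging $x_i$ and $x_i^{*}$.

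The converse — that $\frak R=r\Leo$ principal forces $R$ to be finitely generated — is the delicate direction. Using $c_i^{*}\Leo=\Leo$ (because $c_i^{*}c_i=1$) one obtains $r\Leo=\sum_i\l_i\Leo=J\Leo$, where $J=\sum_i\l_i\bar\Lambda$ is a finitely generated right ideal of $\bar\Lambda$ contained in $R$. The main obstacle is that $R$ can strictly contain $J$: for instance if $r=x_1 x_2^{*}+x_2 x_1^{*}$ one computes $r^{2}=x_1 x_1^{*}+x_2 x_2^{*}=1$, so $r$ is a unit of $\Leo$ and $R=\bar\Lambda\supsetneq x_1\bar\Lambda+x_2\bar\Lambda=J$. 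The extra generators of $R$ arise from Leavitt-type syzygies $\sum_i\l_i\alpha_i\in\bar\Lambda$ with $\alpha_i\notin\bar\Lambda$. To bound them I would, for $\mu=r\rho\in R$, pad $\rho=\sum_k\gamma_k d_k^{*}$ so that all $d_k$ share a common length $m'$, apply the completeness relation $\sum_{|b|=m'}bb^{*}=1$ to expand $\mu=\sum_{|b|=m'}(\mu b) b^{*}$ with each $\mu b\in\bar\Lambda$, and then use orthogonality to express $\mu b$ in terms of $r$ and the $\l_i$'s. The crucial finiteness step — that a single $m'$ suffices uniformly in $\mu$, so that $R$ is generated over $\bar\Lambda$ by finitely many products $rb$ together with the $\l_i$'s — should follow from combining the B\'ezout principality of $\frak R$ with the semifir structure of $\bar\Lambda$; this uniform-$m'$ bookkeeping is the main technical hurdle I expect to face.
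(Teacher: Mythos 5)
Your argument for the first assertion ($r\in(\bar\Lambda\cap r\Leo)\Leo$ via padding and monomial orthogonality), for $\frak R=R\Leo$, for the implication ``$R$ finitely generated $\Rightarrow\frak R$ principal'' via Proposition~\ref{mfir}, and for the passage to left ideals by the canonical anti-isomorphism, is correct and is essentially the same route the paper takes. More importantly, your objection to the paper's proof is right: the paper asserts that $R=\bar\Lambda\cap r\Leo$ is ``trivially generated'' as a right ideal of $\bar\Lambda$ by the finite set $\{rb : b\in\bB,\ \deg b = m\}$, and your element $r=x_1x_2^*+x_2x_1^*$ in $\Leo$ for $n=2$, a unit with $r^2=1$, refutes this: there $R=\bar\Lambda$, while the right ideal generated by those $rb$'s lies inside the proper ideal $I^m$ for every $m\ge 1$. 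So there is a genuine gap in the published proof, and you have located it precisely.

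However, your own proposal leaves that gap open. The sketched strategy (pad $\rho$, expand $\mu=\sum_{|b|=m'}(\mu b)b^*$, then ``use orthogonality'' with a uniform $m'$) does not produce a finite generating set of $R$: the expansion $\mu=\sum_{|b|=m'}(\mu b)b^*$ is just the normal form of $\mu\in\bar\Lambda$ inside $\Leo$ and does not exhibit $\mu$ as a $\bar\Lambda$-linear combination of the $rb$'s, and no uniform bound on $m'$ as $\mu=r\rho$ ranges over $R$ is in evidence. Thus the direction ``$\frak R$ principal $\Rightarrow R$ finitely generated'' --- equivalently, the unconditional claim in the lemma's first sentence that $\bar\Lambda\cap r\Leo$ is finitely generated --- remains unproved in your write-up. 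A workable repair, at the price of reordering the paper's logic, runs as follows: the proof of Theorem~\ref{quo1} only invokes the weak consequence of Lemma~\ref{sub1} you did establish (for $r\ne 0$ some $rb$ is a nonzero element of $\bar\Lambda$), so Theorem~\ref{quo1} and hence the flatness of $\Leo$ as a left $\bar\Lambda$-module are available; since $\bar\Lambda$ is a fir the right ideal $R$ is free of some rank $\kappa$, and flatness identifies $R\otimes_{\bar\Lambda}\Leo$ with $R\Leo=r\Leo$, so a free $\Leo$-module of rank $\kappa$ is cyclic, forcing $\kappa$ to be finite because no nonzero ring has a cyclic free module of infinite rank.
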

\begin{proof} The equality in $\Leo$
$$1=\sum\limits_{i=1}^nx_ix^*_i=\sum\limits_{i=1}^nx_i\left(\sum\limits_{j=1}^nx_jx^*_j\right)x^*_i=\cdots=\sum\limits_{b, c\in \bB}^{|b|=|c|=l}bc^*\quad (l\in \N)$$
and $r\cdot 1=r=1\cdot r$ imply
\begin{align}\label{eq10}
r=\sum\limits_{b, c\in \bB}^{|b|=|c|=l}(rb)c^*=\sum\limits_{b, c\in \bB}^{|b|=|c|=l}b(c^*r) \quad (l\in \N)
\end{align}
with $rb\in \bar \Lambda$ and $c^*r\in {\bar \Lambda}^*$ for almost all $l$.  Consequently, if $r$ is represented as a linear combination
$r=\sum\limits_{i=1}^lb_ic^*_i$ with the least number $l$ of non-zero coefficients,  setting  $d=d_r=\max \{|c_i| \,|\, i=1, \cdots, l\}$, then for $m=d+1$ one has $rb\in r\Leo \cap \Lambda$ for all monomials $b$ of degree $m$. Thus  $r\in (r\Leo \cap \bar \Lambda)\Leo$ holds by \eqref{eq10} and hence  $\frak R=R\Leo$. Moreover, $r\Leo \cap \bar \Lambda$ is trivially generated by the finite set $\{rb\, |\, \deg b=m\}$, hence the proof is complete by Proposition \ref{mfir}.  
\end{proof}

\begin{remark}\label{def} {Recall that $I$ resp., $I^*$ is the two-sided ideal of $\Lambda$ resp., $\Lambda^*$ defined by the set $\{x_1, \cdots, x_n\}$ resp., $\{x^*_1, \cdots, x^*_n\}$ of variables. The corresponding ideal topology ${\frak T}\, (resp., {\frak T}^*)$ is a right (resp., left) Gabriel topology of $\Lambda$ (resp., $\Lambda^*$).} By \cite[Lemma~IX.1.6]{s1},  $Q_I(\Lambda)=\inlim_{l\in \N}\Hom_{\Lambda}(I^l, \Lambda_{\Lambda})$
 is a ring with respect to the multiplication induced by partial composition of maps. Another equivalent way is to identify $Q_I(\Lambda)$ with the subring of the maximal ring of right quotients of $\Lambda$ generated by $\Lambda$ and $\Hom_{\Lambda}(I_{\Lambda},\Lambda)$.
{By definition $Q_I(\Lambda)$ is the ring of right quotients of $\Lambda$ by the Gabriel topology $\frak T$.}
\end{remark}

\begin{theorem}\label{quo1} $\Leo$ is naturally isomorphic to the ring $Q_I(\Lambda)$ of right quotients of $\Lambda$ by the ideal topology ${\frak T}=\{I^l\, |\, l\in \N\}$ hence it is a flat left module over $\Lambda$. By symmetry, $\Leo$ is also isomorphic to the ring $Q_{I^*}(\Lambda^*)$ of left quotients of $\Lambda^*$ by the ideal topology ${\frak T}^*=\{(I^*)^l \,|\, l \in\N\}$ hence it is a flat right module over $\Lambda^*$. 
\end{theorem}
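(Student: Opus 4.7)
The plan is to exhibit a natural $K$-algebra isomorphism $\varphi:\Leo\to Q_I(\Lambda)$ and then to read off left flatness from the direct-limit description of $Q_I(\Lambda)$. The statement involving $\Lambda^*$ will then follow by the built-in symmetry of the Leavitt relations \eqref{de3} under the swap $x_i\leftrightarrow x_i^*$ combined with reversal of multiplication, so it is enough to treat the first assertion.

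\emph{Construction of $\varphi$.} Since $I$ is free as a right $\Lambda$-module with basis $\{x_1,\dots,x_n\}$, there exist uniquely determined right $\Lambda$-linear maps $e_i^{*}:I\to\Lambda$ characterized by $e_i^{*}(x_j)=\delta_{ij}$. Viewing $e_i^{*}\in\Hom_\Lambda(I,\Lambda)\subset Q_I(\Lambda)$ and $x_i\in\Lambda\subset Q_I(\Lambda)$, the Leavitt relations \eqref{de3} hold in $Q_I(\Lambda)$: the identity $e_i^{*}x_j=\delta_{ij}$ is immediate, and $\sum_i x_ie_i^{*}$ equals the inclusion $I\hookrightarrow\Lambda$, which represents the unit of $Q_I(\Lambda)$. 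The universal property of $\Leo$ therefore produces the desired homomorphism $\varphi$.

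\emph{Bijectivity.} Any class in $Q_I(\Lambda)$ is represented by some $f\in\Hom_\Lambda(I^l,\Lambda)$; since $I^l$ is free on the right with basis $\{b\in\bB:|b|=l\}$, $f$ is determined by the values $f(b)\in\Lambda$, and a short calculation gives $\varphi\bigl(\sum_{|b|=l}f(b)b^{*}\bigr)=f$, proving surjectivity. For injectivity, assume $\varphi(r)=0$ and use Lemma~\ref{sub1} to choose $l$ so large that $rb\in\bar\Lambda$ for every $|b|=l$ and $r=\sum_{|b|=l}(rb)b^{*}$. The composition $\Lambda\to\bar\Lambda\subseteq\Leo\xrightarrow{\varphi}Q_I(\Lambda)$ agrees on the generators $x_i$ with the canonical injection $\Lambda\hookrightarrow Q_I(\Lambda)$, hence is injective, forcing $\Lambda\to\bar\Lambda$ to be an isomorphism. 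Therefore $\varphi(rb)=\varphi(r)b=0$ implies $rb=0$ for every $|b|=l$, so $r=0$. The step I expect to be the main obstacle is precisely this bookkeeping: the argument implicitly identifies $\bar\Lambda$ with $\Lambda$, an identification that is itself a by-product of the map $\varphi$, but the potential circularity is harmless since by construction $\varphi$ restricts to the canonical map on the generators $x_i$.

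\emph{Flatness.} Because $I^l\cong\Lambda^{n^l}$ as right $\Lambda$-modules, $\Hom_\Lambda(I^l,\Lambda)$ is a free left $\Lambda$-module of rank $n^l$ (with dual basis $\{b^{*}:|b|=l\}$), hence flat. Filtered direct limits of flat modules are flat, so $Q_I(\Lambda)=\inlim\Hom_\Lambda(I^l,\Lambda)$, and thus $\Leo$ via $\varphi$, is a flat left $\Lambda$-module; the symmetric right flatness over $\Lambda^*$ follows.
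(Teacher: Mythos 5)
Your proof is correct and follows essentially the same route as the paper's: both exhibit the Leavitt relations among $\{x_i,\,e_i^{*}\}$ (the paper's $x_i^{\star}$) inside $Q_I(\Lambda)=\inlim\Hom_\Lambda(I^l,\Lambda)$, produce the canonical $K$-algebra map $\Leo\to Q_I(\Lambda)$, prove surjectivity via the dual basis $\{b^{\star}:|b|=l\}$ of the free module $\Hom_\Lambda(I^l,\Lambda)$, derive injectivity from Lemma~\ref{sub1}'s identity $r=\sum_{|b|=l}(rb)b^{*}$, and read off left flatness from the fact that the direct limit is taken over free left $\Lambda$-modules. The only presentational difference is that you phrase the construction through the universal property of the localization presentation, and you spell out more explicitly the (harmless, non-circular) identification of $\bar\Lambda$ with $\Lambda$ used in the injectivity step, which the paper handles by simply observing that the map restricts to the identity on $\Lambda$.
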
 

\begin{proof} Since the $I^l_{\Lambda}$'s are freely generated by $n^l$ monomials of degree $l$, $\frak T$ is a perfect Gabriel topology by \cite[Propositions VI.6.10, XI.3.3 and XI.3.4(d)]{s1}. Hence
 $Q_I(\Lambda)={\inlim}_{l\in \N} \Hom_{\Lambda}(I^l, \Lambda_{\Lambda})$
 is a $K$-algebra, called the \emph{ring of right quotients} of $\Lambda$ by $\frak T$, as we already mentioned. {For every monomial $b \in {\bB} \, (|b|=l\geq 1)$ of positive degree $l$ let
$b^{\star}$ be the right $\Lambda$-module homomorphism
$b^{\star}: I^l\rightarrow \Lambda_{\Lambda}$ such that $$c\in I^l \cap {\bB} \mapsto b^{\star}(c)=\begin{cases} 1& \text{if}\qquad c=b,\\ 0& \text{if}\qquad c\neq b \end{cases}.$$}
In view of the equalities
$$I^l=\bigoplus\limits_{b\in \bB}^{|b|=l} b\Lambda\qquad \& \qquad \Hom_{\Lambda}(I^l_{\Lambda}, \Lambda)=\bigoplus\limits_{b\in \bB}^{|b|=l} {_\Lambda\Lambda}b^{\star}$$
and the fact that all $\Hom_{\Lambda}(I^l, \Lambda_{\Lambda})\, (l \in \N)$ are free left $\Lambda$-modules of rank $n^l$ one obtains that ${_{\Lambda}Q_I}(\Lambda)$ is flat. For $b=b_{i_1}\cdots b_{i_l}$ it is easy to check the equality $b^{\star}=b^{\star}_{i_l}\cdots b^{\star}_{i_1}$ in $Q_I(\Lambda)$. Hence $\Lambda$ and the $\, x^{\star}_i$'s generate $Q_I(\Lambda)$ and the row $(x_1,\cdots, x_n)$ is the inverse of the column $^t(x^{\star}_1, \cdots, x^{\star}_n)$. Therefore, from the definition of $\Leo$, there is a canonical map $\f:\Leo\rightarrow Q_I(\Lambda)$ induced by the identity map on $\Lambda$ and the rule $x^*_i\mapsto x^{\star}_i$ for  $i=1, \cdots, n$. Consequently, $\f$ is surjective and restricts to the identity on $\Lambda$ whence $\Lambda$ is a subalgebra of $\Leo$. 

For the injectivity of $\f$, assume $\f(r)=0$ for some $r\in \Leo$. Then, by Lemma~\ref{sub1}, $r\neq 0$ implies $0\neq r\m\in \Lambda$ for some monomial $\m$ and so $0\neq r\m=\f(r\m)=\f(r)\f(\m)=0$, a contradiction. Consequently, $\f$ is an isomorphism whence $\Leo$ is isomorphic to $Q_I(\Lambda)$.
\end{proof}
Interchanging the role of $x_i$ and $x^*_i$ one obtains
\begin{corollary}\label{sub2} The free algebra $\Lambda^*=K\langle x^*_1, \cdots, x^*_n\rangle$ is naturally isomorphic to the subalgebra ${\bar \Lambda}^*$  of $\Leo$. The rule $b\in {\bB}\mapsto b^*$ induces an involution of $\Leo$ which restricts to the canonical anti-isomorphism from $\Lambda$ to $\Lambda^*$.
\end{corollary}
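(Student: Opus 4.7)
The first assertion is almost immediate from the symmetric half of Theorem~\ref{quo1}: interchanging the roles of $\{x_i\}$ and $\{x^*_i\}$ in the proof of that theorem, one applies Lemma~\ref{sub1} to $\Lambda^*$ and the ideal topology $\frak T^*$ to realize $\Leo\cong Q_{I^*}(\Lambda^*)$ as a ring containing $\Lambda^*$ as a subalgebra. Concretely, the same argument using the identity $1=\sum_{b,c\in\bB,\,|b|=|c|=l}bc^*$ shows that the canonical map $\Lambda^*\to\Leo$ induced by $x_i^*\mapsto x_i^*$ is a $K$-algebra embedding whose image is $\bar\Lambda^*$, the $K$-subalgebra generated by $x_1^*,\ldots,x_n^*$.

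For the involution, the plan is to exploit the defining relations \eqref{de3} of $\Leo$ as a universal $K$-algebra. I would define a map $\sigma$ on generators by $\sigma(x_i)=x_i^*$ and $\sigma(x_i^*)=x_i$, extended anti-multiplicatively, and verify that it respects \eqref{de3}. For the first relation,
\begin{align*}
\sigma\Bigl(\sum_{i=1}^n x_i x_i^*\Bigr)=\sum_{i=1}^n\sigma(x_i^*)\sigma(x_i)=\sum_{i=1}^n x_i x_i^*=1,
\end{align*}
while for the second,
\begin{align*}
\sigma(x_j^* x_i)=\sigma(x_i)\sigma(x_j^*)=x_i^* x_j=\delta_{ij}=\sigma(\delta_{ij}).
\end{align*}
Hence, by the universal property of $\Leo$ determined by \eqref{de3}, $\sigma$ extends to an anti-endomorphism of $\Leo$. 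Applying $\sigma$ twice sends $x_i\mapsto x_i^*\mapsto x_i$ and $x_i^*\mapsto x_i\mapsto x_i^*$; since $\sigma^2$ is a composition of two anti-endomorphisms it is an endomorphism fixing all generators, so $\sigma^2=\mathrm{id}$ and $\sigma$ is an involution.

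Finally, the restriction of $\sigma$ to $\Lambda$ acts on a monomial $b=x_{i_1}\cdots x_{i_l}\in\bB$ by
\begin{align*}
\sigma(b)=\sigma(x_{i_l})\cdots\sigma(x_{i_1})=x_{i_l}^*\cdots x_{i_1}^*=b^*,
\end{align*}
which is precisely the canonical anti-isomorphism $\Lambda\to\Lambda^*$ induced by $x_i\mapsto x_i^*$ composed with transpose of monomials; this matches the description preceding the statement. The only place one must be slightly careful is in the well-definedness step, since $\Leo$ is presented by generators and relations and one must confirm that the anti-multiplicative extension respects both sides of each relation, which is what the two computations above verify.
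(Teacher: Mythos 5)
Your proposal is correct and takes essentially the same route as the paper. The paper's entire ``proof'' of Corollary~\ref{sub2} is the single phrase preceding it --- ``Interchanging the role of $x_i$ and $x^*_i$ one obtains'' --- appealing to the symmetric half of Theorem~\ref{quo1}; you fill in the implicit details. For the first assertion you invoke, as the paper does, the isomorphism $\Leo\cong Q_{I^*}(\Lambda^*)$ from the symmetric part of Theorem~\ref{quo1}. For the involution you make explicit the argument the paper leaves tacit, namely the universal property of the presentation~\eqref{de3}: you define $\sigma$ on generators, check anti-multiplicativity compatibility (equivalently, a homomorphism into $\Leo^{\mathrm{op}}$) against both defining relations, and observe that $\sigma^2$ is an endomorphism fixing all generators, hence the identity, so $\sigma$ is an involutive anti-automorphism. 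This is the natural and correct way to establish the claim, and the computations $\sigma(\sum x_ix_i^*)=1$ and $\sigma(x_j^*x_i)=x_i^*x_j=\delta_{ij}$ are right. One small slip in exposition: at the end you describe the restriction $\sigma|_\Lambda$ as ``the canonical anti-isomorphism \dots composed with transpose of monomials''; the canonical anti-isomorphism $b\mapsto b^*$ (that is, $x_{i_1}\cdots x_{i_l}\mapsto x^*_{i_l}\cdots x^*_{i_1}$) already \emph{is} $\sigma|_\Lambda$ --- no further composition with $b\mapsto{}^\tau b$ is wanted, and indeed your own computation $\sigma(b)=b^*$ shows exactly this. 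This is a wording issue only; the underlying argument is sound.
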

Hence $\Lambda$ and $\Lambda^*$ are viewed naturally as $K$-subalgebras of $\Leo$. Moreover, $\Lambda^*$ and $\Lambda$ are isomorphic free algebras, for example, under the star-forgetting rule $b^*\mapsto {^{\tau}b} \, (b\in \bB)$. 

Since a {nonunital} subalgebra $A\subseteq \Lambda$ of codimension 1 is both a two-sided ideal and hence a free nonunital algebra of rank $n$, {as it is straightforward to verify}, Theorem \ref{quo1} provides a variable-free definition of $\Leo$ and sheds a light on "canonical" involutions of $\Leo$ and a close connection to the automorphism groups of free algebras.
\begin{corollary}\label{varifree} If $A\subseteq \Lambda$ is a {nonunital} subalgebra of codimension 1, then {$A$ is a two-sided ideal of $\Lambda$ and} $\Leo\cong Q_A(\Lambda)$, the ring of right quotients with respect to the ideal topology $\{A^l \, | \, l\in \N\}$.
\end{corollary}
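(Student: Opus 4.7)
The plan is threefold: first verify that $A$ is a two-sided ideal, then exhibit $A$ as the image of the standard augmentation ideal $I$ under a $K$-algebra automorphism of $\Lambda$, and finally transport the isomorphism $\Leo \cong Q_I(\Lambda)$ of Theorem~\ref{quo1} along that automorphism.

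For the first step, since $A$ is a nonunital $K$-subspace of $\Lambda$, we have $1 \notin A$ and moreover $K \cap A = 0$: any $0 \neq k \in A$ would give $1 = k^{-1}k \in KA \subseteq A$, a contradiction. Combined with codimension $1$, this forces the vector-space decomposition $\Lambda = K \oplus A$. Then for $a \in A$ and $\lambda = k + a' \in \Lambda$, both $\lambda a = ka + a'a$ and $a\lambda = ak + aa'$ lie in $A$ by multiplicative closure of $A$ and its $K$-linearity, so $A$ is a two-sided ideal.

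For the second step, $\Lambda/A$ is a $1$-dimensional $K$-algebra, hence canonically $\cong K$, yielding an augmentation $\epsilon: \Lambda \to K$ with $\ker\epsilon = A$. Setting $k_i := \epsilon(x_i) \in K$, the rule $x_i \mapsto x_i - k_i$ extends by the universal property of $\Lambda$ to a $K$-algebra endomorphism $\sigma$, which is an automorphism with inverse $x_i \mapsto x_i + k_i$. Since $(\epsilon \circ \sigma)(x_i) = 0$ for all $i$, the composition $\epsilon \circ \sigma$ is the zero augmentation whose kernel is $I$, so $\sigma^{-1}(A) = I$, i.e.\ $\sigma(I) = A$.

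For the third step, applying $\sigma$ iteratively yields $\sigma(I^l) = A^l$ for each $l \in \N$, so $\sigma$ carries the ideal topology $\frak T = \{I^l\}$ onto the topology $\{A^l\}$. By the functoriality of forming rings of quotients with respect to a Gabriel topology, $\sigma$ lifts to a $K$-algebra isomorphism $Q_I(\Lambda) \to Q_A(\Lambda)$; composed with the identification $\Leo \cong Q_I(\Lambda)$ of Theorem~\ref{quo1}, this gives the desired $\Leo \cong Q_A(\Lambda)$.

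The main subtlety, rather than a serious obstacle, is the recognition that any codimension-$1$ two-sided ideal of a free $K$-algebra is of this \emph{augmentation form} and hence conjugate to $I$ by an automorphism of $\Lambda$; this elementary observation is precisely what reduces the general statement to the case of $I$ already established in Theorem~\ref{quo1}.
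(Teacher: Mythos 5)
Your proof is correct, and it makes explicit what the paper's terse remark ("$A$ is a two-sided ideal and hence a free nonunital algebra of rank $n$") leaves implicit. The paper's argument is essentially to observe that the proof of Theorem~\ref{quo1} applies verbatim with a free generating set of $A$ replacing $\{x_1,\dots,x_n\}$, whereas you identify the codimension-$1$ ideal $A$ as $\ker\epsilon$ for an augmentation $\epsilon$, exhibit the explicit translation automorphism $\sigma\colon x_i\mapsto x_i-\epsilon(x_i)$ with $\sigma(I)=A$, and transport the isomorphism $\Leo\cong Q_I(\Lambda)$ across $\sigma$ using functoriality of the Gabriel localization. These are two packagings of the same underlying fact: the codimension-$1$ ideals of $\Lambda$ are precisely the images of $I$ under automorphisms, equivalently the kernels of $K$-points of $\Lambda$. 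Your route has the advantage of producing the automorphism concretely (it is a translation), of never having to rerun the argument of Theorem~\ref{quo1}, and of pinpointing the one nontrivial observation, namely that $K\cap A=0$ forces $\Lambda=K\oplus A$ and hence $\Lambda/A\cong K$ as $K$-algebras; the paper's route is shorter but asks the reader to redo a proof. No gap either way.
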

As a ring of partial maps, every element $r\in \Leo$ is represented by a $\Lambda$-homomorphism $r: S_r\rightarrow R_r$ from the right ideal $S_r=\{\l\in \Lambda \, |\, r\l \in \Lambda\}$, the \emph{domain}, onto $R_r=\Leo r\cap \Lambda$, the \emph{range} of $r$.  Hence $S_r$ has finite codimension because it contains some power of $I$ and $R_r$ is finitely generated. However $R_r$ has not, in general, finite codimension. 
As a first obvious consequence, by comparing degrees of $\l(b)=\l b$ and $r(b)=rb$ for $\l \in \Lambda,\, r\in \Lambda^*$ and  $b\in \bB$ a monomial contained in the range $R_r$ of $r$, we have clearly the following well-known special case of \cite[Corollary 1.5.7]{aas}

\begin{corollary}\label{disj}   $\Lambda\cap \Lambda^*=K$ as subalgebras of $\Leo$.
\end{corollary}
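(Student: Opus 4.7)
The plan is to take $r \in \Lambda \cap \Lambda^*$, write $r$ simultaneously as $r = \l \in \Lambda$ with total degree $d = |\l|$ and as $r = \m \in \Lambda^*$ with $d^*$ the maximum length of an $x^*$-monomial appearing in $\m$, and then force $d = 0$ (so $r \in K$) by comparing the degrees of the two expressions of $rb$ for a suitably long monomial $b \in \bB$.

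Concretely, I would pick any $b \in \bB$ with $|b| \ge d^*$ and compute $rb$ inside $\Leo$ in the two ways. Using $r = \m$: for each monomial $c^*$ occurring in $\m$ the product $c^*b$ is either $0$ or, when $c$ is a prefix of $b$, the tail of $b$ after $c$, which is a monomial of length $|b| - |c| \le |b|$. Hence $\m b \in \Lambda$ and has total degree at most $|b|$, so in particular $b$ lies in the domain of $r$ and $rb \in R_r$. Using $r = \l$: since $\Lambda$ is a domain and right-concatenation by the monomial $b$ is injective on the monomial basis $\bB$, distinct maximal monomials of $\l$ yield distinct maximal monomials of $\l b$, so $\l b \neq 0$ and $\deg(\l b) = d + |b|$ whenever $\l \neq 0$.

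Equating the two computations of $rb$ forces $d + |b| \le |b|$, hence $d \le 0$, so $r \in K$. The only point that needs care is the absence of cancellation when forming $\l b$, but this is immediate from the free-algebra structure of $\Lambda$. I do not anticipate any real obstacle; the statement simply encodes the fact that $\Lambda$ acts on $\bB$ by strictly raising degree while $\Lambda^*$ never raises it, so the two subalgebras can coincide only in degree zero.
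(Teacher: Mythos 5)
Your proof is correct and follows essentially the same route as the paper's: the paper likewise disposes of the claim in one line, "by comparing degrees of $\l b$ and $rb$" for $\l\in\Lambda$, $r\in\Lambda^*$, and $b\in\bB$ long enough, which is exactly your device of right-multiplying by a long monomial so that the $\Lambda^*$-expression strictly lowers (or preserves) degree while the $\Lambda$-expression strictly raises it.
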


\begin{proposition}\label{simple1}  If $n\geq 2$, then $\Leo$ is a purely infinite simple $K$-algebra, i.e., for every non-zero $r\in \Leo$ one has $srt=1$ for suitable elements $s, t\in \Leo$  
\end{proposition}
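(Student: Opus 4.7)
The plan is to successively simplify $r$ by left and right multiplication until it becomes $1$, in three stages, with the hypothesis $n\geq 2$ used only in the last.

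First I would invoke Lemma~\ref{sub1}: for $l$ sufficiently large, every product $rb$ with $|b|=l$ lies in $\bar\Lambda$, and since $r\neq 0$ at least one such product is nonzero. Fix such a monomial $\mu$, so that $\lambda := r\mu\in\Lambda\setminus\{0\}$. Next, pick a monomial $b'$ of \emph{minimum} length occurring in $\lambda$, with nonzero coefficient $k_{b'}$. Every other monomial $c$ of $\lambda$ has $|c|\geq|b'|$, and a direct application of \eqref{de3} gives $(b')^{*}c = c''$ when $c = b'c''$ and $(b')^{*}c = 0$ otherwise; hence $(b')^{*}\lambda = k_{b'}+\underline\rho$ with $\underline\rho\in I$. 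After dividing by $k_{b'}$ we may replace $r$ by $r' := k_{b'}^{-1}(b')^{*}r\mu = 1+\underline\rho'$ with $\underline\rho'\in I$.

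The heart of the proof is to annihilate $\underline\rho'$. Set $d := \deg\underline\rho'$ and, exploiting $n\geq 2$, put $b := x_2 x_1^{d}$, so that $b^{*} = (x_1^{*})^{d}x_2^{*}$ and $b^{*}b = 1$. For any monomial $c$ of $\underline\rho'$ (of length between $1$ and $d$), either $c$ does not start with $x_2$ and already $x_2^{*}c = 0$; or $c = x_2c'$, and then $(x_1^{*})^{d}c' = 0$ unless $c' = x_1^{|c'|}$, in which case $b^{*}cb = (x_1^{*})^{d-|c|+1}x_2 x_1^{d}=0$ because $x_1^{*}x_2 = 0$ and the exponent $d-|c|+1\geq 1$. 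Thus $b^{*}\underline\rho'b = 0$, and so $b^{*}r'b = 1$.

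Combining the three stages gives $srt = 1$ with $s := b^{*}k_{b'}^{-1}(b')^{*}$ and $t := \mu b$. The main obstacle is Stage 3: one needs a monomial $b$ whose initial letter is orthogonal, in the sense of \eqref{de3}, to the subsequent letters, so that left-multiplying by $b^{*}$ and right-multiplying by $b$ together sandwich every term of $\underline\rho'$ to zero. No such word exists when $n=1$, consistently with $\mathbb{L}_K(1,1) = K[x,x^{-1}]$ being far from purely infinite.
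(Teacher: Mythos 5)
Your proof is correct, and it takes a genuinely different route from the paper's. The paper argues by descent: among all nonzero elements of $\Leo r\Leo \cap \Lambda$, one chooses $\bar\lambda$ minimizing first the number of monomials and then the degree, and shows---using left multiplication by a well-chosen $x_j^*$ and right multiplication by an $x_i$ with $i\neq j$, which is where $n\geq 2$ enters---that $\bar\lambda$ must be a nonzero scalar. Your argument is instead constructive and proceeds in the order ``clear denominators, normalize the lowest-order part, then conjugate away the tail'': right-multiply by a monomial $\mu$ to land in $\Lambda$ (equation \eqref{eq10}), left-multiply by $(b')^*$ for a minimum-length monomial $b'$ to get a comonic $1+\underline\rho'$, and finally sandwich by $b=x_2x_1^d$ and $b^*=(x_1^*)^dx_2^*$ to annihilate $\underline\rho'$. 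The payoff of your version is that it produces $s$ and $t$ explicitly and isolates the role of $n\geq 2$ in a single transparent step (the existence of a word $b$ with $b^*\underline\rho'b=0$ and $b^*b=1$); the paper's descent is shorter to state but leaves more to the reader, and in particular conflates $r\Leo$ with $\Leo r\Leo$ when it multiplies the minimal element on the left. One small edge case you should make explicit: if $\underline\rho'=0$ then $d=\deg\underline\rho'$ is $-\infty$ and $b$ is undefined, but then $r'=1$ already and Stage~3 is vacuous; a phrase such as ``assume $\underline\rho'\neq 0$, else we are done'' closes this. Otherwise the verification that $b^*cb=0$ for every monomial $c$ of $\underline\rho'$ with $1\le|c|\le d$ is airtight.
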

\begin{proof} 
Assume $n\geq 2$. For the simplicity of $\Leo$, let $0\neq r\in \Leo$  and  consider the elements  $0\neq \l=\sum\limits_{i=1}^l k_ib_i \in r\Leo \cap \Lambda$ with smallest $l$.  Among them, choose an element $\bar\lambda$ with  smallest degree. Then first by multiplying  an appropriate $x_i$ on the right if $\bar\l$ has non-zero constant and then on the left  by another $x^*_j\neq x^*_i$,  which is possible since $n\geq 2$, one can see that all $b_i$ must start with a same variable, say $x$, and $\l$ is without constant if it is not a non-zero scalar.  Hence $x^*\l$ has lower degree and so $\l$ must be a nonzero constant.  This implies  the purely infinite simplicity of $\Leo$.
\end{proof}

In the following corollary we list some basic properties of $\Leo$ that can be deduced from Theorem \ref{quo1}. In particular Claim (3) describes the open left ideals of $\Leo$.
\begin{corollary}\label{flatpro}
 Let $\frak L$ a left ideal  of $\Leo$ and  $L=\Lambda^*\cap \frak L$. Then:

   \begin{enumerate}
\item  $\frak L=\Leo L$ holds for every left ideal $\frak L$ of $\Leo$, so  $L$ is a finitely generated left ideal of $\Lambda^*$ if and only if  $\frak L$ is finitely generated.
\item The algebra  $\Leo$ is hereditary and projective-free. 
\item A left ideal $N$ of $\Lambda^*$ satisfies $\Leo N=\Leo$ iff $N$ contains a power of $I^*$.
\item $\frak L$ is finitely generated of finite colength if and only if $L$ is of finite codimension. 
\end{enumerate}
\end{corollary}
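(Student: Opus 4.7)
The plan is to verify the four claims in order, each time exploiting the bijection $\frak L \leftrightarrow L = \Lambda^* \cap \frak L$ between left ideals of $\Leo$ and of $\Lambda^*$ furnished by Lemma \ref{sub1}, together with the flatness of $\Leo$ over $\Lambda^*$ from Theorem \ref{quo1}. For (1) I would simply transport the symmetric half of Lemma \ref{sub1}: the defining relations \eqref{de3} give $\Leo$ module type $(1,n)$, so Proposition \ref{mfir} makes it left B\'ezout and therefore every finitely generated left ideal of $\Leo$ is principal, and combining this with Lemma \ref{sub1} yields the claimed equivalence. For (2) I would first upgrade the setup to the stronger fact that every left ideal of $\Leo$ is \emph{free}: $\frak L = \Leo L$ by (1), $L$ is free over the fir $\Lambda^*$, and flatness identifies $\frak L$ with $\Leo \otimes_{\Lambda^*} L$, which is free over $\Leo$; hereditariness follows at once. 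Projective-freeness then needs the additional input of purely infinite simplicity (Proposition \ref{simple1}) and idempotent surgery to reduce a finitely generated projective $P$ with $P \oplus Q \cong \Leo^m$ to the free case, absorbing the failure of IBN along the lines of the Leavitt-algebra techniques developed in \cite{aas}.

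Claim (3) admits direct proofs in both directions. For $(\Leftarrow)$, the identity $1 = \sum\limits_{|b|=l} bb^*$ already exhibits $1 \in \Leo(I^*)^l \subseteq \Leo N$, so $\Leo N = \Leo$. For $(\Rightarrow)$, I would start from a finite representation $1 = \sum_j u_j s_j$ with $u_j \in \Leo$ and $s_j \in N$, then invoke the left-quotient description $\Leo \cong Q_{I^*}(\Lambda^*)$ from Theorem \ref{quo1} to produce a common integer $m$ such that $c^* u_j \in \Lambda^*$ for every $c \in \bB$ with $|c|=m$ and every $j$; left-multiplying the identity by an arbitrary such $c^*$ then displays $c^* = \sum_j (c^* u_j) s_j \in \Lambda^* N = N$, giving $(I^*)^m \subseteq N$.

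Finally, for (4) the whole argument rides on the flat isomorphism $\Leo/\frak L \cong \Leo \otimes_{\Lambda^*} \Lambda^*/L$ together with the equivalence from (1). In direction $(\Leftarrow)$, finite codimension of $L$ gives a finite composition series on $\Lambda^*/L$, and exactness of $\Leo \otimes_{\Lambda^*} -$ sends each simple factor either to $0$ (by (3), when the factor is $\frak T^*$-torsion, i.e., annihilated by a power of $I^*$) or to a simple $\Leo$-module, bounding the colength of $\frak L$ by the length of $\Lambda^*/L$. The main obstacle of the corollary sits in the converse direction $(\Rightarrow)$: I would split $\Lambda^*/L$ by its $\frak T^*$-torsion submodule $T$, observing that the localization preserves the torsion-free quotient and matches its length with the colength of $\frak L$, so the remaining task is to bound $T$ itself---using that $L$ is finitely generated (hence so is $T$) together with the eventual nilpotency of the $I^*$-action on a finitely generated $\frak T^*$-torsion module. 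Controlling this torsion rigorously is the step I expect to demand the most care.
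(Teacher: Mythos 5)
Your claims (1) and (3) match the paper's own arguments almost verbatim, and your $(\Leftarrow)$ direction of (4) via composition series and exactness of $\Leo \otimes_{\Lambda^*} -$ is a valid alternative to the paper's essentiality argument. The two places where you diverge more substantially are (2) and the $(\Rightarrow)$ direction of (4).

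For (2), you propose to establish projective-freeness through purely infinite simplicity and ``idempotent surgery'' over a split $P \oplus Q \cong \Leo^m$. This is a plausible K-theoretic route for \emph{finitely generated} projectives (it amounts to computing $K_0(\Leo) = \Z/(n-1)\Z$ generated by $[\Leo]$), but the paper's definition of projective-free requires that \emph{all} projectives be free, not just the finitely generated ones. The paper's route is shorter and handles the general case: every left ideal $\frak L$ equals $\Leo \otimes_{\Lambda^*} L$ with $L$ free over the fir $\Lambda^*$, hence every left ideal of $\Leo$ is free; then Kaplansky's theorem for left hereditary rings (any projective is a direct sum of left ideals) finishes the job, with no worry about IBN since a direct sum of free modules is free.

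The real gap is in your $(\Rightarrow)$ argument for (4). You want to split $\Lambda^*/L$ along its $\frak T^*$-torsion submodule $T$ and control $T$ by ``eventual nilpotency of $I^*$ on a finitely generated torsion module.'' But free algebras of rank $\geq 2$ are not Noetherian, so $T$---being a submodule of the cyclic module $\Lambda^*/L$---is not automatically finitely generated; the parenthetical ``$L$ finitely generated, hence so is $T$'' does not follow. Without finite generation of $T$, the nilpotency/finite-dimensionality argument for $T$ does not close. The paper sidesteps this entirely by observing that finite colength of $\frak L$ forces $\frak L$ to be essential in $\Leo$ (because, by projective-freeness, every nonzero left ideal is free of infinite length and hence cannot intersect $\frak L$ trivially), so $L = \frak L \cap \Lambda^*$ is essential in $\Lambda^*$; then the Rosenmann--Rosset theorem \cite[Theorem~3.3]{rr}---a finitely generated essential left ideal of a free algebra has finite codimension---delivers the conclusion directly. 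That external input is what your sketch is missing.
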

\begin{proof} Claim $(1)$ is given by Lemma~\ref{sub1}.

\noindent Claim (2) follows  from the fact that $\frak L=\Leo L=\Leo\otimes_{\Lambda^*} L$ by the flatness of $\Leo_{\Lambda^*}$ ensured in Theorem \ref{quo1}. Thus we conclude  since  free algebras,  {being free ideal rings,} are hereditary and projective-free.

\noindent  If $N$ contains $I^l$ for a suitable $l\geq 0$, then from the identity $1=\sum\limits_{b, c\in \bB}^{|b|=|c|=l}bc^*$ we get  $\Leo N=\Leo$. Conversely, if  $1=\sum\limits_{i=1}r_i\l^*_i$  for appropriate $r_i\in \Leo$ and  $\l^*_i\in N$, we can choose  an  integer $l$ big enough such that $b^*r_i\in \Lambda^*$ for any $b\in \bB$ with $|b|=l$ and any $r_i$. So we get  that  $I^l\leq N$. This shows   Claim (3).

\noindent  Since $\Leo$ is projective free, any principal left ideal is free and of infinite length. It follows easily that if $\frak L$  has finite colength, then it is essential. From the essentiality of $\frak L$ we get that also $L$ is essential in $\Lambda^*$. By the result  of Rosenmann and Rosset in  \cite[Theorem~3.3]{rr},  we conclude that  $L$ has finite codimension since it is finitely generated and essential.
To conclude Claim $(4)$ assume $L$ has finite codimension. In particular, again by \cite[Theorem~3.3]{rr},  it is finitely generated and hence  $\frak L$ is finitely generated. Notice that if $\frak L<\frak M<\Leo$, then $L<\frak M\cap \Lambda^*<\Lambda^*$, so $\frak L$ is of finite colength since  $L$ has finite colength.
\end{proof}

In the notation of the previous corollary, if $L$ is an arbitrary maximal left ideal of $\Lambda^*$ such that the associated primitive ideal $J=\ann_{\Lambda^*}\Lambda^*/L$ is not  contained in $I^*$, then $L$ cannot contain any power $(I^*)^l$. For, $(I^*)^l\subseteq L$ implies
$J+(I^*)^l\subseteq L$. However, $J+I^*=\Lambda^*$ implies $J=J\Lambda^*=J^2+JI^*$ and so $\Lambda^*=(J+I^*)^2=J+(I^*)^2=\cdots=J+(I^*)^l=J^2+(I^*)^l=\cdots=J^m+(I^*)^l$ for any $l, m\in \N$. Consequently, one has $L=\Lambda^*$, which is impossible. Hence $\Leo L\neq \Leo$ by Corollary \ref{flatpro}(3). Since $L$ is maximal, Corollary \ref{flatpro}(1) implies that $\Leo L$
is a maximal left ideal of $\Leo$.
Obvious examples for such maximal left ideals are ones which contain maximal two-sided ideals of the free algebra $\Lambda^*$ different from $I^*$. In this way one obtains a nice {large} class of {new simple modules over $\Leo$ with interesting endomorphism algebras. The subclass induced by maximal left ideals of $\Lambda^*$ which have finite codimension and do not coincide with $I^*$ is implicitly found in Ara's Proposition 3.3 and Theorem 5.1 \cite{ara1}}. More generally, the preceding argument shows that for pairwise different maximal two-sided ideals $J_i$ of $\Lambda^*$ together with positive integers $l_i \in \N$, we have $J^{l_1}_1+J^{l_2}_2\cdots J^{l_m}_m=\Lambda^*$. Hence if $L$ is a finite-codimensional left ideal and $J$ is the maximal two-sided ideal contained in $L$ such that inverse images of maximal two-sided ideals in the factor algebra $\Lambda^*/ J$ are different from $I^*$, then the extended principal left ideal $\Leo L$ satisfies $L=\Lambda^* \cap \Leo L$ and the modules $\Lambda^*/L$ and $\Leo/\Leo L$ have the same length. On the other hand, by Corollary \ref{flatpro}(4) every finitely presented left $\Leo$-module of finite length is isomorphic to $\Leo/\frak L$ such that $L=\Lambda^* \cap \frak L$ is finite-codimensional. 
Assume that $\frak L$ is a maximal left ideal of $\Leo$. Then the intersection $L=\frak L \cap \Lambda^*$ is not necessarily a maximal left ideal of $\Lambda^*$.  $L$ is  maximal only in the set of left ideals $M$ of $\Lambda^*$ satisfying $M=(\Leo M)\cap \Lambda^*$. 
Since $L=\Lambda^* \cap \frak L$ is  finite-codimensional there is
 $L=L_0\subseteq L_1\subseteq \cdots \subseteq L_l=\Lambda^*$ is a maximal ascending chain of left ideals with simple factors $L_{i+1}/L_i \, (i=0, \cdots, l-1)$. Moreover,  
$\Leo L_i=\Leo$ holds for all $i\geq 1$ and $L_1/L$ is a simple finite-dimensional $\Lambda^*$-module such that the associated primitive ideal is a finite-dimensional maximal two-sided ideal $J\neq I^*$. Consequently, if $M$ is a maximal left ideal of $\Lambda^*$ with $\Lambda^*/M\cong L_1/L$ one gets $\Leo/\Leo M\cong \Leo/\frak L$. Moreover, one obtains that all the other possible composition factors of $\Lambda^*/L$ are isomorphic to the trivial module $K=\Lambda^*/I^*$.
This establishes an one-to-one correspondence between isomorphism classes of finitely presented irreducible representations of the Leavitt algebra $\Leo$ and ones of finite-dimensional non-trivial (i.e., not isomorphic to the trivial $\Lambda^*$-module $K$) representations of $\Lambda^*$. {In this manner we obtain 
explicitly the above mentioned results in \cite{ara1}.}
\vskip 0.3cm
Localization theory shows that the torsion modules defined by the Gabriel topology ${\frak T}^*=\{(I^*)^l\,|\, l\in \N\}$ form a Serre {localizing} subcategory of $\Lambda^*$-Mod and $\Leo$-Mod is {the quotient category of $\Lambda^*$-Mod by this Serre category. Hence Ara's Proposition 3.3 and Theorem 5.1 \cite{ara1} are consequences of Theorem \ref{quo1} and the Gabriel's localization theory}. However, one need not to use sledge hammer to crack nuts and to both readers' benefit and the sake of the elementary self-containedness we present the special case of finitely presented modules of finite length.

Let $\frak {FD}$ be the full subcategory of finite-dimensional left $\Lambda^*$-modules. A finite-dimensional $\Lambda^*$-module is called a \emph{Leavitt} module if none of its simple subfactors is isomorphic to the trivial module $K$, that is, its annihilator ideal is a product of maximal two-sided finite-codimensional ideals different from $I^*$. The full subcategory of {$\Lambda^*$-Mod consisting of} all Leavitt modules is called the \emph{Leavitt category}. A finite-dimensional module is called a \emph{Leavitt-torsion} module if its simple subfactors are isomorphic to the trivial module $K$. Any $M\in \frak{FD}$ contains the largest submodule $T(M)$ which has all simple subfactors isomorphic to the trivial module $K$. Then $T(M/T(M))=0$ holds clearly. $T(M)$ is called the \emph{Leavitt torsion submodule} or simply \emph{torsion submodule} of $M$. If $T(M)=0$, then $M$ is a \emph{(Leavitt) torsionfree module}. {Leavitt modules are clearly torsionfree but torsionfree modules are not necessarily Leavitt modules since they may contain simple subfactors which are not contained in their socle and isomorphic to the simple trivial module $K$.} The full subcategory $\frak {LT}$ of Leavitt-torsion modules is clearly a Serre subcategory of $\frak {FD}$. If $M$ is a Leavitt module of length $l$, then the flatness of $\Leo_{\Lambda^*}$ implies that ${\frak M}=\Leo\otimes_{\Lambda^*}M$ is finitely presented and has the same length $l$. {Namely, if $0=M_0\subseteq M_1\subseteq\cdots\subseteq M_l=M$ is a composition chain of $M$, then the flatness of $\Leo_{\Lambda^*}$ implies that
$\Leo\otimes_{\Lambda^*}M_i$  $(i=0, 1, \cdots, l)$ form a corresponding composition chain of $\frak M$. Hence by comparing lengths and by the now routine use of the equality \eqref{eq10} together with similar reasoning for Corollary \ref{flatpro}, one can see that $M$ is isomorphic to
$\{1\otimes m\, |\, m\in M\}\subseteq \frak M$ as left modules over $\Lambda^*$. After this identification we have the  lattice isomorphism between the submodule lattices of $\frak M$ and $M$ induced by $N\subseteq M\mapsto {\frak N}=\Leo\otimes_{\Lambda^*}N=\Leo N\mapsto {\frak N}\cap M$, respectively.}  {Any module homomorphism $f\colon M\rightarrow N$ between Leavitt modules induces clearly the homomorphism $\Leo\otimes f:{\frak M}=\Leo\otimes_{\Lambda^*}M \rightarrow {\frak N}=\Leo\otimes_{\Lambda^*}N$. Furthermore, any homomorphism ${\frak f}:{\frak M}\rightarrow \frak N$ is induced by some homomorphism $\bar f\colon \bar M\rightarrow \bar N$ between submodules $\bar M, \bar N$ of $M, \, N$, respectively. The equality ${\frak f}=\Leo\otimes \bar f$ together with the above deduced lattice isomorphism of submodule lattices show $\bar M=M$ and $\bar N=N$.
Hence the Leavitt category of $\Lambda^*$ can be considered as a full and faithful subcategory of $\Leo$-Mod. 
%\textcolor{red}{ Notice that $\frak M=\Leo\otimes_{\Lambda^*}M$   is cyclic although $M$ is not necessarily cyclic}
%. For instance, consider any maximal finite-codimensional two-sided ideal $J\neq I^*$ of $L$ and put $M=(\Lambda^*/J)^l$ with $l\geq 2$. }Assume that $M$ is a non-cyclic Leavitt module. Such modules exist. For example, one can take any maximal finite-codimensional two-sided ideal $J\neq I^*$ of $L$ and put $M=(\Lambda^*/J)^l$ with $l\geq 2$. Then ${\frak M}\cong \Leo/{\frak L}$ and so we obtain a left ideal $L$ of $\Lambda^*$ such that ${\frak M}\cong \Leo\otimes \Lambda^*/L$. This shows that there are simple subfactors of $\Lambda^*/L$ isomorphic to the trivial module $K$.} 
On the other hand, if $\frak M$ is any finitely presented left $\Leo$-module of finite length, then $\frak M \cong \Leo/\frak L$ by Proposition \ref{mfir} and Corollary \ref{flatpro}(4) where $\frak L$ is a principal ideal of $\Leo$ of  finite colength. Since any homomorphism $f\colon \Leo/\frak L_1=\frak M_1\rightarrow \Leo/\frak L_2=\frak M_2$ between finitely presented modules of finite length is uniquely determined by the image $(1+\frak L_1)f\equiv r \mod \frak L_2 \, (r\in \Leo)$, the equality \eqref{eq10} shows that $f$ is uniquely determined by the images $(b^*+\frak L_1)f\equiv b^*r \mod \frak L_2$ with all $b^*r\in \Lambda^*$, where $b$ runs over all monomials of an appropriate degree $l$ big enough. Therefore $f$ is uniquely determined by a homomorphism ${\overline f}\colon\sum\limits_{|b|=l} \Lambda^*b^*/L_1={\overline M}_1\rightarrow \sum\limits_{|b|=l}\Lambda^* b^*r/L_2={\overline M}_2$ where $L_i=\Lambda^*
\cap \frak L_i$ and $M_i=\Lambda^*/L_i$ contains $\overline M_i \,(i=1, 2)$. Corollary \ref{flatpro}(3) implies that $M_1/{\overline M_1}$ is a torsion module. Note the fact that both $M_1$ and $M_2$ are torsion free.  Furthermore, any  homomorphism $g\colon M\subseteq M_1\rightarrow M_2$ such that $M_1/M$ is a torsion module, defines a homomorphism $1\otimes g\colon \frak M_1=\Leo\otimes M\rightarrow \frak M_2$ and $\overline {1\otimes g}$  coincides with $g$ on $M$. 

Take now an arbitrary $M\in \frak {FD}$. Then $M$ maps to a left $\Lambda^*$-submodule $\overline M=\{1\otimes m\,\,\,|\,\, m\in M\}$ of $\frak M=\Leo\otimes_{\Lambda^*}M$ whose kernel is exactly $T(M)$. If $f\colon \frak M \rightarrow \frak N=\Leo\otimes N\,\, (N\in \frak {FD})$ is any $\Leo$-homomorphism, then there is $l\in \N$ such that $b^*(1\otimes m)f\in {\overline N}=\{ 1\otimes n \,\,\, |\,\, n\in N\}\subseteq \frak N$ holds for all $b\in \bB$ with $|b|=l$. Hence $f$ is induced by a $\Lambda^*$-homomorphism from a submodule $\underline M$ of $M$ with $T(M/\underline M)=M/\underline M$ into a submodule $\underline N$ of $N/T(N)$. This implies Theorem 5.1 of Ara \cite{ara1} that the category $\frak {FPL}$ of finitely presented left $\Leo$-modules of finite length is the quotient category of $\frak {FD}$ by the Serre subcategory $\frak {TL}$. In particular, the Leavitt category is equivalent to a full subcategory of $\frak {FPL}$. {Notice that although the Leavitt category seems to be equivalent to the category $\frak {FPL}$ of finitely presented $\Leo$-modules of finite length, it is an open and important question whether they are actually equivalent.}
To conclude we have obtained a refinement of Theorem 5.1 of Ara \cite{ara1} as 
\begin{corollary}\label{arafp} Finitely presented $\Leo$-modules of finite length form the category $\frak {FPL}$ which is the quotient category of finite-dimensional $\Lambda^*$-modules by one of Leavitt-torsion modules, whence the {Leavitt category is fully embedded in $\frak {FPL}$ via the functor
$M\mapsto \Leo \otimes_{\Lambda^*} M=\frak M$ with $\End(\frak M)\cong \End(M)$.} Hence for any finite-dimensional factor algebra $A$ of $\Lambda^*$ such that $I^*$ maps to $A$, finite-dimensional left $A$-modules are fully embedded in $\frak {FDL}$.
\end{corollary}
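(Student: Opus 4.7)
The plan is to consolidate the three-step argument sketched in the paragraphs immediately preceding the statement. The tool is the tensor functor $F = \Leo \otimes_{\Lambda^*} -\colon \frak{FD} \to \Leo\text{-Mod}$, which is exact by the flatness of $\Leo_{\Lambda^*}$ from Theorem~\ref{quo1}. First I would check that $F$ lands in $\frak{FPL}$: for $M \in \frak{FD}$, induction on length together with Corollary~\ref{flatpro}(3) gives that $F(M)$ is finitely presented of finite length at most $\operatorname{length}_{\Lambda^*} M$; conversely any $\frak M \in \frak{FPL}$ has the form $\Leo/\Leo L$ with $L = \Lambda^* \cap \frak L$ finite-codimensional by Corollary~\ref{flatpro}(4), so $\frak M \cong F(\Lambda^*/L)$. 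Identification of $\ker F$ is then immediate: $F$ kills the trivial module $K = \Lambda^*/I^*$ because $\Leo I^* = \Leo$ (Corollary~\ref{flatpro}(3) applied to $I^*$ itself), and by exactness $F(M) = 0$ iff every composition factor of $M$ is trivial, i.e.\ iff $M \in \frak{LT}$.

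Next I would verify that $F$ induces an equivalence $\frak{FD}/\frak{LT} \cong \frak{FPL}$ by checking the universal property on morphisms. Given $M, N \in \frak{FD}$ and a $\Leo$-morphism $\frak f\colon F(M) \to F(N)$, the key computation---already outlined above the statement---is to use the identity $1 = \sum_{|b|=l} b b^*$ in $\Leo$ coming from \eqref{eq10} to choose $l$ large enough that $b^* \cdot \frak f(1 \otimes m) \in 1 \otimes N$ for every $b$ of length $l$ and every $m$ in a finite generating set of $M$. This produces a $\Lambda^*$-homomorphism $\bar f\colon \underline M \to N/T(N)$ defined on a submodule $\underline M \subseteq M$ with $M/\underline M \in \frak{LT}$, and the assignment $\frak f \mapsto \bar f$ is mutually inverse to $\bar f \mapsto 1 \otimes \bar f$ modulo the Serre equivalence $\underline M \sim M$. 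The main obstacle is exactly this bookkeeping: verifying well-definedness up to Serre equivalence, and that two $\Lambda^*$-morphisms induce the same $\Leo$-morphism iff they agree on a common cotorsion submodule---a concrete Gabriel-Popescu type argument that must be executed carefully in our finite-length setting using \eqref{eq10}.

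Finally, the full embedding of the Leavitt category is a clean consequence: if $M$ is a Leavitt module, then $T(M) = 0$ forces the canonical map $M \to F(M)$ to be injective, and the description of morphisms collapses, since both $M$ and $N$ are $\frak T^*$-torsion-free, to the bijection $\Hom_\Leo(F(M), F(N)) \cong \Hom_{\Lambda^*}(M, N)$; taking $M = N$ yields $\End_\Leo(F(M)) \cong \End_{\Lambda^*}(M)$. For the last sentence, if $A$ is a finite-dimensional factor algebra of $\Lambda^*$ such that $I^* \to A$ is surjective, equivalently $\ker(\Lambda^* \to A) + I^* = \Lambda^*$, then the trivial representation $K = \Lambda^*/I^*$ cannot occur as a composition factor of any $A$-module. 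Hence every finite-dimensional $A$-module, viewed as a $\Lambda^*$-module, is a Leavitt module, and the preceding full embedding applies verbatim.
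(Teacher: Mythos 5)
Your proposal follows essentially the same route as the paper: the exact functor $F=\Leo\otimes_{\Lambda^*}-$, the identification of $\ker F$ with $\frak{LT}$ via Corollary~\ref{flatpro}(3) and exactness, the morphism bookkeeping via the identity \eqref{eq10}, and then the specialization to Leavitt modules and to factor algebras $A$ with $J+I^*=\Lambda^*$. One small imprecision worth noting: in the full-embedding step you invoke ``both $M$ and $N$ are $\frak T^*$-torsion-free'' as the reason the morphism description collapses, but torsion-freeness only gives $N/T(N)=N$; to conclude $\underline M=M$ you actually need the stronger Leavitt property of $M$ (so that $M/\underline M$, being simultaneously a quotient of a Leavitt module and Leavitt-torsion, vanishes) --- and the paper itself emphasizes that torsion-free does not imply Leavitt. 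Since your argument opens with ``if $M$ is a Leavitt module'' the hypothesis is in place, so this is a misattribution of the reason rather than a gap.
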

\begin{remarks}\label{import1}
\begin{enumerate}
\item {Although finitely generated $\Leo$-modules are cyclic by Proposition \ref{mfir}, finitely generated Leavitt $\Lambda^*$-modules are not necessarily cyclic. {For example, if $J\neq I^*$ is a maximal finite-codimensional two-sided ideal of $\Lambda^*$ and $M=(\Lambda^*/J)^l$ with $l\geq 2$, then $M$ is clearly a finitely generated non-cyclic Leavitt $\Lambda^*$-module.} The submodule lattice of $M$ is isomorphic to that of the $\Leo$-module $\frak M=\Leo\otimes_{\Lambda^*}M$, which is a cyclic module. So $\frak M$ is isomorphic to $\Leo/\frak L$ for a suitable principal left ideal $\frak L$, by the fact that $M$ is finitely presented. However, $M$ and $\Lambda^*/L, \, (L=\Lambda^*\cap \frak L)$ are  not isomorphic $\Lambda^*$-modules.}
%  \textcolor{red}{For instance, consider any maximal finite-codimensional two-sided ideal $J\neq I^*$ of $\Lambda^*$ and put $M=(\Lambda^*/J)^l$ with $l\geq 2$. Then ${\frak M}\cong \Leo/{\frak L}$ and so we obtain a left ideal $L=\frak L\cap \Lambda^*$ of $\Lambda^*$ such that ${\frak M}\cong \Leo\otimes \Lambda^*/L$. This shows that there are simple subfactors of $M$ isomorphic to the trivial module $K$.}
\item  There are several ways to realize a particular finite dimensional $K$-algebra as a factor algebra of $\Lambda^*$. For example, to every point $\underline{k}=(k_1, \cdots, k_n)$ of the affine space $K^n$ the factor algebra $\Lambda^*/I_{\underline{k}}, \,\, I_{\underline{k}}=\sum \Lambda (x^*_i-k_i)$ is isomorphic to $K$.
\item There are left ideals $L\subseteq \Lambda^*$ of finite codimension not containing any power of $I^*$ satisfying 
$L\neq \Leo L\cap \Lambda^*$ which are maximal left ideals. For example, $I^*\cap \sum\limits_{i=1}^n \Lambda^*(x^*_i-1)=L\neq \Leo L\cap \Lambda^*= \sum\limits_{i=1}^n \Lambda^*(x^*_i-1)$.\end{enumerate}
\end{remarks}

Corollary \ref{arafp} raises an important but vague question about {how  Leavitt modules can be constructed and classified. It is a big surprise that finite-dimensional $\Lambda$-modules $V_\l$ induced by noncommutative polynomials $\l \in \Lambda$ provide a simple transparent partial answer with remarkable consequences  via Theorem \ref{goal1}.}

We conclude this section with the following easy result which will be very useful in the sequel.

\begin{lemma}\label{pre2} If $\l \in \Lambda$ is a polynomial without constant, then  $\Leo\l=\sum \Leo{\gamma_i}$ for suitable polynomials $\gamma_i$'s with constant  and of degree lower than $\lambda$.
\end{lemma}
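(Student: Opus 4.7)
The plan is to exploit the Leavitt relations $x_j^*x_i=\delta_{ij}$ to reduce $\Leo\l$ to the left ideals generated by the cofactors of $\l$, and then iterate on degree. Write the (constant-free) polynomial $\l$ in its normal form $\l=\sum_{i=1}^n x_i\l_{x_i}$. Multiplying on the left by $x_j^{*}$ inside $\Leo$ gives
\begin{equation*}
x_j^{*}\l=\sum_{i=1}^n(x_j^{*}x_i)\l_{x_i}=\l_{x_j},
\end{equation*}
so each cofactor $\l_{x_j}$ lies in $\Leo\l$. Conversely the identity $\l=\sum_i x_i\l_{x_i}$ immediately places $\l$ in $\sum_j \Leo \l_{x_j}$. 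Combining the two inclusions yields
\begin{equation*}
\Leo\l=\sum_{j=1}^n \Leo\,\l_{x_j},
\end{equation*}
where every $\l_{x_j}$ has degree at most $|\l|-1$.

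Next I would run an induction on $|\l|$. The base case $|\l|=1$ is immediate: $\l=\sum k_j x_j$ with not all $k_j=0$, and then $\l_{x_j}=k_j\in K$, so $\Leo\l=\sum_j \Leo k_j$ is a sum of $\Leo\gamma_i$ with $\gamma_i\in K\setminus\{0\}$ (nonzero constants, of degree $0<1$). For the inductive step, split the cofactors $\l_{x_j}$ into two classes: those with nonzero constant term may be taken as the desired $\gamma_i$ directly, since their degree is strictly less than $|\l|$; those with zero constant term are themselves polynomials without constant of degree strictly less than $|\l|$, so the induction hypothesis applies and writes each such $\Leo\l_{x_j}$ as a sum $\sum_k \Leo\gamma_{j,k}$ with $\gamma_{j,k}$ having nonzero constant and $|\gamma_{j,k}|<|\l_{x_j}|<|\l|$. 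Zero cofactors contribute nothing and are dropped. Assembling the pieces gives the required representation of $\Leo\l$.

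I do not anticipate a real obstacle: the only subtlety is the transparent but decisive observation that, because $\l$ has no constant term, the Leavitt ``partial inverses'' $x_j^{*}$ actually produce elements of $\Lambda$ (not just of $\Leo$) when applied to $\l$, so the $\Leo$-left ideal generated by $\l$ is controlled by the cofactors. Everything else is bookkeeping, and the induction closes because forming cofactors strictly lowers the degree, so the recursion must terminate at cofactors that are either zero or have nonzero constant.
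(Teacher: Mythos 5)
Your proof is correct and follows essentially the same route as the paper: write $\l=\sum_i x_i\l_{x_i}$, observe that $x_j^*\l=\l_{x_j}$ gives $\Leo\l=\sum_j\Leo\l_{x_j}$, and iterate on degree. The paper states this more tersely (``repeat the argument until the $b_j$'s are monomials of $\l$ of minimal degree''), whereas you make the induction, the split into cofactors with and without constant term, and the termination explicit; this is a cleaner bookkeeping of the same idea.
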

\begin{proof}
Assume $\lambda=\sum\limits_{i=1}^n x_i\lambda_{x_i}$, where the $\lambda_{x_i}$'s are the cofactors of $\lambda$ defined in the previous section. Then $\Leo\l=\sum\limits_{i=1}^n \Leo\l_{x_i}$ and we can repeat the argument until we get  $\Leo \l=\sum\limits_{j=1}^l \Leo \l_{b_j}$ for the  $b_j$'s   monomials of $\l$ of minimal degree. \end{proof}

\section{The proof}
\label{pro}
In this section we are going to  prove Theorem \ref{goal1}, by means of  the results on the Leavitt algebra $\Leo$ previously  presented. Recall in particular that both $\Lambda=K\langle x_1, \cdots, x_n\rangle$ and $\Lambda^*=K\langle x^*_1, \cdots, x^*_n\rangle$ are free subalgebras of $\Leo$. Without loss of generality $\l\in \Lambda$ is assumed comonic and  $|\l|\geq 1$, i.e., $\l=1+\sum x_i\l_{x_i}=1+{\underline \l}\neq 1$. Once $\lambda$ is fixed, we set   $L_\l=\Lambda^*\cap \Leo\l$ and $ {M_\l}=\Leo/\Leo\l$. 
\begin{lemma}\label{pre1} Let $\l \in \Lambda$ be a comonic polynomial. If $\a \in \Leo\l \cap \Lambda$, then $\a \in \Lambda\l$.  Hence $\Leo\l\cap \Lambda=\Lambda\l$.
\end{lemma}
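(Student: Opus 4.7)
The plan is to establish $\Leo\lambda \cap \Lambda \subseteq \Lambda\lambda$, the reverse inclusion being immediate. So given $\alpha = r\lambda \in \Lambda$ with $r \in \Leo$, I must exhibit $s \in \Lambda$ satisfying $s\lambda = \alpha$. I will do this by reducing the length of a star-monomial representation of $r$.

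The central computational ingredient is an explicit formula for $b^*\lambda$, $b\in\bB$. Since $\lambda = 1+\underline\lambda$ is comonic, the Leavitt relations $x_i^*x_j=\delta_{ij}$ applied to $\underline\lambda = \sum_j x_j\lambda_{x_j}$ give at once
\begin{align*}
x_i^*\lambda \;=\; x_i^* + \lambda_{x_i}.
\end{align*}
Iterating on $b = x_{i_1}\cdots x_{i_l}$ (so $b^* = x_{i_l}^*\cdots x_{i_1}^*$), a straightforward induction on $|b|$ yields
\begin{align*}
b^*\lambda \;=\; \lambda_b \;+\; b^* \;+\; \sum_{s=1}^{|b|-1} k_{h_b(s)}\,t_b(s)^*,
\end{align*}
where $k_{h_b(s)}\in K$ denotes the constant term of the iterated cofactor $\lambda_{h_b(s)}$. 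The features to exploit are: $\lambda_b\in\Lambda$; the diagonal term $b^*$ reproduces itself; and every remaining $t_b(s)^*$ has length strictly less than $|b|$.

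By Lemma~\ref{sub1} I pick $l$ large enough that $r = \sum_{|b|=l} r_b\,b^*$ with $r_b := rb \in \Lambda$, and induct on this $l$. The base case $l=0$ gives $r\in\Lambda$ and the conclusion is trivial. For $l\geq 1$, applying the formula above term by term produces
\begin{align*}
\alpha \;=\; r\lambda \;=\; \sum_{|b|=l} r_b\lambda_b \;+\; r \;+\; u,
\end{align*}
where $u := \sum_{|b|=l}\sum_{s=1}^{l-1} r_b\,k_{h_b(s)}\,t_b(s)^*$ admits a representation as a sum $\sum_{|c|\leq l-1}\mu_c c^*$ with $\mu_c\in\Lambda$, i.e.\ in the next lower tier of star-length.

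Setting $s_0 := \alpha - \sum_{|b|=l} r_b\lambda_b$, which visibly lies in $\Lambda$, the identity rearranges to $r = s_0 - u$, whence $u\lambda = s_0\lambda - \alpha$ again lies in $\Lambda$ as the difference of two elements of $\Lambda$. The inductive hypothesis applied to $u$ then yields $u\lambda \in \Lambda\lambda$, and so $\alpha = s_0\lambda - u\lambda \in \Lambda\lambda$, as desired. The only genuinely delicate step is the derivation of the explicit expansion of $b^*\lambda$, which is precisely where the comonic hypothesis intervenes; once that formula is in hand, the inductive bookkeeping is routine.
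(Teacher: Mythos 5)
Your proof is correct, but it takes a genuinely different route from the paper's. The paper writes $\lambda = 1-\delta$ with $\delta = -\underline{\lambda}$ and observes the telescoping identity $r = \alpha + \alpha\delta + \cdots + \alpha\delta^{l} + r\delta^{l+1}$ directly from $r = \alpha + r\delta$; choosing $l$ large enough that $r\delta^{l+1} \in \Lambda$ then forces $r\in\Lambda$, so that $\alpha = r\lambda$ lies in $\Lambda\lambda$ with the explicit left cofactor $r$ itself. Your argument instead expands $b^*\lambda = \lambda_b + b^* + \sum_{s=1}^{|b|-1}k_{h_b(s)}t_b(s)^*$ --- this is exactly the computation underlying congruence \eqref{eq3} in the proof of Lemma \ref{pre4}, which you have in effect rederived here --- and inducts on the star-length $l$ of a representation $r = \sum_{|b|=l}r_b\,b^*$ supplied by Lemma \ref{sub1}. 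Both approaches rest on the same finiteness fact from Lemma \ref{sub1} (that $rI^m\subseteq\Lambda$ for large $m$), but the paper's version is shorter and gives the strictly stronger conclusion that $r$ itself lies in $\Lambda$, whereas yours exhibits some $s\in\Lambda$ with $s\lambda=\alpha$ without identifying $s$ with $r$. What your version buys is an explicit formula that makes visible the combinatorial structure of $b^*\lambda$, which the paper needs anyway in Lemma~\ref{pre4}.

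One step deserves more care than you gave it. You note that $u=\sum_{|c|\leq l-1}\mu_c c^*$ with $\mu_c\in\Lambda$ and then apply the inductive hypothesis, which is formulated for an element written in a single tier $\sum_{|c|=m}\nu_c c^*$. The gap closes, but one must argue it: for any monomial $c$ of degree $l-1$ and any $1\leq s\leq l-1$ one has $t_b(s)^*c\in\Lambda$ because $|t_b(s)| = l-s \leq l-1 = |c|$, so $uc\in\Lambda$, and therefore $u=\sum_{|c|=l-1}(uc)\,c^*$ does lie in the tier $m=l-1<l$. As written, the proposal elides this verification, so you should make it explicit.
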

\begin{proof} Put ${\underline \l}=-\delta$ for convenience. For $r\in \Leo$ with $\a=r\l$ one has $\a=r-r\delta$, so that  $r=\a + r\delta=\a+(\a+r\delta)\delta=\cdots=\a+\a \delta+\cdots +\a \delta^l+r\delta^{l+1}$ for all $l\in \N$. Since  there exists $l$ large enough such that  $r\delta^{l+1}\in \Lambda$, we get   $r\in \Lambda$ and so $\a\in \Lambda\l$.
\end{proof} 

\begin{corollary}\label{proper}
If $\lambda\in \Lambda$ is comonic and $|\l|\geq 1$, then $\Leo\l$ is a proper left ideal of $\Leo$ and $L_{\lambda}$ is a proper left ideal of $\Lambda^*$. In particular, if $\Leo \g=\Leo \g$ for nonzero polynomials $\d, \g \in \Lambda$ with constant, then $\g$ is a scalar multiple of $\d$.  
\end{corollary}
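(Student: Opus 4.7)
My plan is to derive both claims directly from Lemma \ref{pre1}, which identifies $\Leo\l\cap \Lambda$ with $\Lambda\l$. Beyond that, the only inputs I need are the standard facts that a free associative algebra $\Lambda$ is a domain and that its group of units equals $K^*$ (i.e., a polynomial of positive degree is never a unit in $\Lambda$).

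For the properness of $\Leo\l$, I argue by contradiction: if $\Leo\l=\Leo$, then $1\in \Leo\l\cap \Lambda$, which by Lemma \ref{pre1} equals $\Lambda\l$; so $1=s\l$ for some $s\in \Lambda$, making $\l$ a left unit of the domain $\Lambda$, hence a unit, hence a nonzero scalar. This contradicts $|\l|\geq 1$. For $L_\l=\Lambda^*\cap \Leo\l$ the statement is then immediate: if $1\in L_\l$ then $1\in \Leo\l$ and the previous paragraph applies, so $L_\l\subsetneq \Lambda^*$.

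For the final claim (which I read as asserting that $\Leo\g=\Leo\d$ forces $\g\in K^*\d$), I first rescale to the comonic case. Writing $\d=k_\d+\underline{\d}$ and $\g=k_\g+\underline{\g}$ with $k_\d,k_\g\in K^*$, pass to $\d'=k_\d^{-1}\d$ and $\g'=k_\g^{-1}\g$, both comonic, satisfying $\Leo\d'=\Leo\d=\Leo\g=\Leo\g'$. Applying Lemma \ref{pre1} to each of $\d'$ and $\g'$ gives $\d'\in \Leo\g'\cap\Lambda=\Lambda\g'$ and $\g'\in \Lambda\d'$, so there exist $a,b\in \Lambda$ with $\d'=a\g'$ and $\g'=b\d'$. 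Then $\d'=(ab)\d'$ in the domain $\Lambda$ with $\d'\neq 0$ forces $ab=1$, so $a\in K^*$; matching constant terms on both sides of $\d'=a\g'$ (both sides comonic) forces $a=1$. Hence $\d'=\g'$, and unrolling the rescaling gives $\g=(k_\g/k_\d)\d$.

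I do not anticipate a serious obstacle. The only point that requires a moment of care is the reduction to comonic polynomials, because Lemma \ref{pre1} is phrased only for comonic $\l$; but the scaling is harmless since $\Leo\d=\Leo(k_\d^{-1}\d)$ for any nonzero scalar, and constant terms transform transparently. Everything else is a direct consequence of Lemma \ref{pre1} together with the elementary structure of $\Lambda$.
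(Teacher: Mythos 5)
Your proof is correct and relies, as the paper's does, on Lemma \ref{pre1} together with the fact that $\Lambda$ is a domain with unit group $K^*$. The only stylistic difference is in establishing that $L_\l$ is proper: you argue directly ($1\in L_\l$ would force $1\in\Leo\l$), whereas the paper cites Corollary \ref{flatpro}(1); and you make explicit the rescaling-plus-constant-term argument for the final claim, which the paper leaves implicit. Both routes are essentially the same and equally valid.
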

\begin{proof}
By the previous lemma,  polynomials in $\Lambda$ with constant of degree $\leq |\l|-1$ do not belong to $\Lambda\l$ and also not to $\Leo\l$. Hence $\Leo\l$ is a proper left ideal of $\Leo$. Moroever, $L_{\lambda}$ is  a proper left ideal of $\Lambda^*$ by Corollary \ref{flatpro}(1).
\end{proof}

\begin{lemma}\label{pre3} If $\l$ is comonic, then $\Lambda+\Leo\l=\Leo$, whence $\Lambda/\Lambda\l \cong \Leo/\Leo\l$ as $\Lambda$-modules.
\end{lemma}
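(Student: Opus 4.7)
The plan is to show the stronger statement that modulo $\Leo\lambda$ every $c^{*}$ (for $c\in\bB$) is congruent to an element of $\Lambda$, and then combine this with the Lemma \ref{sub1} decomposition $r=\sum_{|b|=|c|=l}(rb)c^{*}$ with $rb\in\Lambda$ for $l$ large enough. Once $\Lambda+\Leo\lambda=\Leo$ is in hand, the isomorphism is immediate from Lemma \ref{pre1}.

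First I would extract the key identity. Writing $\lambda=1+\underline{\lambda}$ with $\underline{\lambda}=\sum x_{i}\lambda_{x_{i}}$, and using $x^{*}_{i}x_{j}=\delta_{ij}$, we get
\begin{equation*}
x^{*}_{i}\lambda \;=\; x^{*}_{i}+x^{*}_{i}\underline{\lambda}\;=\;x^{*}_{i}+\lambda_{x_{i}},
\end{equation*}
so $x^{*}_{i}\equiv -\lambda_{x_{i}}\pmod{\Leo\lambda}$ and in particular $x^{*}_{i}\in\Lambda+\Leo\lambda$ for every $i$.

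Next I would promote this to all starred monomials by induction on $|c|$. Suppose $(c_{0})^{*}\equiv\mu\pmod{\Leo\lambda}$ with $\mu\in\Lambda$, and let $c^{*}=x^{*}_{j}(c_{0})^{*}$. Decomposing $\mu=k+\sum x_{i}\mu_{x_{i}}$ in Cohn normal form and using $x^{*}_{j}x_{i}=\delta_{ji}$ together with the previous step,
\begin{equation*}
c^{*}\;\equiv\; x^{*}_{j}\mu \;=\; kx^{*}_{j}+\mu_{x_{j}}\;\equiv\;-k\lambda_{x_{j}}+\mu_{x_{j}}\pmod{\Leo\lambda},
\end{equation*}
and the right-hand side lies in $\Lambda$. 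Now for arbitrary $r\in\Leo$, Lemma \ref{sub1} furnishes an $l$ with $rb\in\Lambda$ for every monomial $b$ of length $l$, and $r=\sum_{|b|=|c|=l}(rb)c^{*}$. Since each $c^{*}\equiv\mu_{c}\pmod{\Leo\lambda}$ with $\mu_{c}\in\Lambda$, and $\Lambda\cdot\Leo\lambda\subseteq\Leo\lambda$, we conclude $r\in\Lambda+\Leo\lambda$, so $\Lambda+\Leo\lambda=\Leo$.

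Finally, the composite $\Lambda\hookrightarrow\Leo\twoheadrightarrow\Leo/\Leo\lambda$ is a $\Lambda$-module homomorphism; it is surjective by the first claim and its kernel is $\Lambda\cap\Leo\lambda=\Lambda\lambda$ by Lemma \ref{pre1}, yielding $\Lambda/\Lambda\lambda\cong\Leo/\Leo\lambda$. The only subtle point is the inductive step: one must perform the Cohn normal form expansion \emph{inside} $\Lambda$ at each stage so that the relation $x^{*}_{j}x_{i}=\delta_{ji}$ can be applied; but given the earlier setup this is routine, and no further obstacle arises.
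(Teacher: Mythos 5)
Your proof is correct, but it takes a genuinely different route from the paper's. The paper's own argument is shorter: writing $\underline{\lambda}=-\delta$, it observes $1\equiv\delta\pmod{\Leo\lambda}$, hence $1\equiv\delta^{l}$ and $r\equiv r\delta^{l}$ for every $l$, and then chooses $l$ large enough that $r\delta^{l}\in\Lambda$ (possible because $\delta$ has no constant term, so $\delta^{l}\in I^{l}$, and the starred parts of $r$ get absorbed). That is one line of congruences plus a degree count. What you do instead is prove, by induction on $|c|$, that every starred monomial $c^{*}$ is congruent mod $\Leo\lambda$ to an explicit element of $\Lambda$, and then feed this into the decomposition $r=\sum(rb)c^{*}$ of Lemma~\ref{sub1}. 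The trade-off: the paper's $\delta^{l}$ trick is more economical and parallels the proof of Lemma~\ref{pre1}, while your inductive step is exactly the formula $x^{*}_{j}\mu\equiv -k\lambda_{x_{j}}+\mu_{x_{j}}$ that the paper only makes explicit afterwards in Remark~\ref{star} when defining the $\star_{\lambda}$-action on $\Lambda/\Lambda\lambda$. In that sense your argument is more constructive and anticipates the module structure the paper needs later, at the cost of an induction that the paper avoids. Both arguments are sound; the final passage to the isomorphism via Lemma~\ref{pre1} and Noether's isomorphism theorem is identical in both.
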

\begin{proof} For ${\underline \l}=-\delta$, the congruence $1\equiv \delta \mod \Leo\l$ implies $1\equiv \delta^l \mod \Leo\l$ and hence $r\equiv r\delta^l \mod \Leo \l$  for any $l\in \N$ and any $r\in \Leo$. Consequently, choosing $l$ large enough such that $r\delta^l \in \Lambda$, we get $(r+\Leo\l)\cap \Lambda\neq \emptyset$ for all $r\in \Leo$. Hence $\Lambda+\Leo\l=\Leo$ holds and  $\Lambda/\Lambda\l \cong \Leo/\Leo\l$ as $\Lambda$-modules by the equality $\Leo\l \cap \Lambda=\Lambda\l$ of Lemma \ref{pre1} and the basic isomorphism theorem of Noether.
\end{proof}
\begin{remark}\label{star} Notice that $\Lambda/\Lambda\l$ becomes an $\Leo$-module by putting 
$$x^*_j(\g+\Lambda\l)=(x_j\star_{\lambda}\g)+\Lambda\l=-k\l_j+\g_j+\Lambda\l$$ for every $\g=k+\sum x_i\g_i\in \Lambda$, where $\g_j$ and $\l_j$ are the $x_j$-th cofactors of $\g$ and  $\l$, respectively. This definition is well-posed because $x_i\star_\l \l=0$ and $x_i\star_\l \Lambda \l \subseteq \Lambda \l$ hold for all $i=1, \cdots, n$, as it is routine to check.  In fact, the Leavitt algebra $\Leo$ with Lemma \ref{pre3} is a proper motivation for the  $\star_\l$-action defined in Section \ref{prif}. 

 Notice that  the $\star_{\lambda}$-action endows  $\Lambda\l$ and $\Lambda$  with a structure of left $\Lambda^*$-modules  {where multiplication by $x^*_i$ is exactly the $\star_\l$ action of $x_i$, that is,} 
$$x_i^*\times r=x_i\star_{\lambda} r$$
for any $r\in \Lambda$. However they are not modules over $\Leo$ because $\sum x_ix^*_i$ is not the identity on them.
\end{remark}

Facts on the  $\star_\l$-action listed in Section~\ref{prif} which are useful in  studying   injective envelopes of finite-dimensional $\Lambda$-modules are summarized as

\begin{corollary}\label{ess1} Let $\l=1+\sum x_i\lambda_{i}$. Then $\Lambda$ is a left $\Lambda$-module via the  $\star_\l$-action $x_j\star_{\lambda} \g=-k \l_j+\g_{j}$ for  $\g=k+\sum x_i\g_i\in \Lambda$. If $|\l|\geq 1$, then $V_\l\neq 0$ is essentially embedded in $\Lambda/\Lambda\l$.
\end{corollary}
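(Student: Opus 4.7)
The first assertion, that the formula $x_j\star_\l\g=-k\l_j+\g_j$ (for $\g=k+\sum x_i\g_i$) makes $\Lambda$ a left $\Lambda$-module, is merely the linear extension of the defining rules \eqref{de1} to an arbitrary polynomial and requires no further argument. That $V_\l\neq 0$ follows from $|\l|\geq 1$: any maximal monomial $b$ of $\l$ of positive length has cofactor $\l_b$ equal to a nonzero scalar, so $K\subseteq V_\l$. The injection $V_\l\hookrightarrow\Lambda/\Lambda\l$ is immediate from the equality $V_\l\cap\Lambda\l=0$ recorded just before the statement, which holds because $\Lambda$ is a domain and every element of $V_\l$ has degree strictly less than $|\l|$.

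For essentiality, given $\g\in\Lambda\setminus\Lambda\l$ my plan is to exhibit a monomial $\mu\in\bB$ with $|\mu|\geq|\g|$ such that $\mu\star_\l\g$ is a nonzero element of $V_\l$. The argument splits into two steps.

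\emph{Step (i) (the action lands in $V_\l$).} I claim $b\star_\l\g\in V_\l$ for every monomial $b$ with $|b|\geq|\g|$. I prove the sharper congruence $b\star_\l\g\equiv b\star\g\pmod{V_\l}$ by induction on $|b|$, where $\star$ is the untwisted action of Proposition~\ref{leavittes0}. The inductive step uses the identity $x_i\star_\l\nu-x_i\star\nu=-\nu(0)\l_{x_i}\in V_\l$ (valid for every $\nu\in\Lambda$) together with closure of $V_\l$ under $\star_\l$. For $|b|=|\g|$ the computation of the untwisted action gives $b\star\g=k_{{}^\tau b}\in K\subseteq V_\l$ and for $|b|>|\g|$ gives $b\star\g=0$, so in either case the congruence places $b\star_\l\g$ in $V_\l$.

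\emph{Step (ii) (the action is nonzero mod $\Lambda\l$).} From the relation $x_i^*\l=x_i^*+\l_{x_i}$ in $\Leo$ one computes $x_i\star_\l\g-x_i^*\g=-\g(0)\,x_i^*\l\in\Leo\l$, and iteration yields the congruence $\mu\star_\l\g\equiv({}^\tau\mu)^*\g\pmod{\Leo\l}$ for every monomial $\mu$. Setting $l=|\g|$ and applying the Leavitt identity $1=\sum_{|b|=|c|=l}bc^*$ from the proof of Lemma~\ref{sub1} to $\g$: if every $c^*\g$ with $|c|=l$ lay in $\Leo\l$ then $\g=\sum b(c^*\g)\in\Leo\l\cap\Lambda=\Lambda\l$ by Lemma~\ref{pre1}, contradicting the hypothesis. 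Choosing $c$ with $c^*\g\notin\Leo\l$ and setting $\mu={}^\tau c$, Step (i) places $\mu\star_\l\g$ in $V_\l$, while the congruence together with $c^*\g\notin\Leo\l$ places $\mu\star_\l\g$ outside $\Lambda\l$. Thus $\mu\star_\l\g$ represents a nonzero element of $V_\l$ in the cyclic $\Lambda$-submodule of $\Lambda/\Lambda\l$ generated by $\bar\g$, proving essentiality. I expect the main obstacle to be Step (i), whose induction must carefully propagate the $\l$-twist through composition of $\star_\l$ actions; Step (ii) is then a clean application of the Leavitt identity together with the transfer $\Lambda/\Lambda\l\cong\Leo/\Leo\l$ of Lemma~\ref{pre3}.
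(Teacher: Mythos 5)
Your proof is correct and follows essentially the same route the paper intends: the paper's one-line justification ("$b\star_\l\g\in V_\l$ for any monomial $b$ of length $|\g|$") is precisely your Step~(i), and your Step~(ii) — using the identity $1=\sum_{|b|=|c|=l}bc^*$ together with $\Leo\l\cap\Lambda=\Lambda\l$ (Lemma~\ref{pre1}) to produce some monomial whose twisted action on $\g$ is nonzero modulo $\Lambda\l$ — is the natural completion left implicit in the text. The details you supply (the induction $b\star_\l\g\equiv b\star\g\pmod{V_\l}$, and the congruence $\mu\star_\l\g\equiv({}^\tau\mu)^*\g\pmod{\Leo\l}$) are correct and make the omitted reasoning explicit.
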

The last claim of Corollary \ref{ess1} follows easily since $b\star_\l \g\in V_{\lambda}$ for any  monomial $b\in \bB$ of length $|\g|$.

\begin{lemma}\label{pre4} In $\Leo$ the equalities $V_\l+\Leo\l=V_\l \oplus \Leo\l=\Lambda^*+\Leo\l$ hold. Moreover $L_{\lambda}$ is a finite codimensional left ideal of $\Lambda^*$ and $V_\l \cong \Lambda^*/(\Leo\l\cap \Lambda^*)=\Lambda^*/L_\l$. In particular, $\codim L_{\lambda}= \dim V_\l$.
\end{lemma}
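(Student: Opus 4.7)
The plan is to leverage the Leavitt relation $x_i^* x_j = \delta_{ij}$ to convert the abstract $\star_\l$-action into concrete identities in $\Leo$, and then to conclude by a short degree comparison and the second isomorphism theorem. The basic observation is that for any $\mu = k + \sum_j x_j \mu_{x_j} \in \Lambda$ one has $x_i^* \mu = k x_i^* + \mu_{x_i}$ in $\Leo$; applied to the comonic $\l$ this gives $x_i^* \l = x_i^* + \l_{x_i}$. I would iterate this by induction on $|b|$: writing $b = b' x_{i_l}$ and $b^* = x_{i_l}^* (b')^*$, one produces a polynomial $\nu_b \in \Lambda^*$ satisfying $b^* \l = \nu_b + \l_b$, given recursively by $\nu_{x_i} = x_i^*$ and $\nu_b = x_{i_l}^* \nu_{b'} + k_{b'} x_{i_l}^*$, where $k_{b'}$ is the constant term of the cofactor $\l_{b'}$. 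Inspection of this recurrence shows $\nu_b = b^* + (\text{a } K\text{-linear combination of } c^* \text{ with } |c| < |b|)$, so $\{1\} \cup \{\nu_b : b \in \bB\}$ differs from the canonical basis $\{1\} \cup \{b^* : b \in \bB\}$ of $\Lambda^*$ by a triangular change of basis and still spans $\Lambda^*$.

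Reading $b^* \l = \nu_b + \l_b$ modulo $\Leo\l$ gives both $\l_b \equiv -\nu_b$ and $\nu_b \equiv -\l_b$, so every cofactor of $\l$ lies in $\Lambda^* + \Leo\l$ and every $\nu_b$ lies in $V_\l + \Leo\l$. Combined with $1 \in V_\l$ (any comonic polynomial of positive degree has $1$ as an iterated cofactor) and the spanning statement above, this yields the first equality $V_\l + \Leo\l = \Lambda^* + \Leo\l$. For the direct sum, cofactors are polynomials, so $V_\l \subseteq \Lambda$ and hence $V_\l \cap \Leo\l \subseteq \Lambda \cap \Leo\l = \Lambda\l$ by Lemma \ref{pre1}; but the bound $\deg \l_b \le |\l| - |b| \le |\l| - 1$ forces every nonzero element of $V_\l$ to have degree strictly less than $|\l|$, whereas every nonzero element of $\Lambda\l$ has degree at least $|\l|$ because the free algebra is a domain. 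Therefore $V_\l \cap \Lambda\l = 0$ and the sum is indeed direct.

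Finally, the second isomorphism theorem produces
$$V_\l \;\cong\; \frac{V_\l + \Leo\l}{\Leo\l} \;=\; \frac{\Lambda^* + \Leo\l}{\Leo\l} \;\cong\; \frac{\Lambda^*}{\Lambda^* \cap \Leo\l} \;=\; \frac{\Lambda^*}{L_\l},$$
and the finite-dimensionality of $V_\l$ (noted immediately after Definition \ref{de2}) immediately yields $\codim L_\l = \dim V_\l < \infty$. The main obstacle is the first step: setting up the recurrence for $\nu_b$ and verifying its triangular form $\nu_b = b^* + (\text{lower-degree } c^*\text{-terms})$ so that the $\nu_b$'s span $\Lambda^*$; once this bookkeeping is in hand, the two inclusions, the degree-based direct sum argument, and the concluding isomorphism fall out routinely.
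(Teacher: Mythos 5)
Your proof is correct and takes essentially the same route as the paper: your recursive identity $b^*\l = \nu_b + \l_b$ with $\nu_b = b^* + (\text{lower-degree }c^*\text{-terms})$ is exactly the paper's congruence $-b^* - \sum_j k_j t_b(j)^* \equiv \l_b \pmod{\Leo\l}$, just built up inductively from the base relation $x_i^*\mu = k x_i^* + \mu_{x_i}$ rather than written in closed form. The only cosmetic difference is the direct-sum step, where you compare degrees of elements of $V_\l$ and of $\Lambda\l$ instead of invoking Lemma \ref{pre1} on the individual cofactors, but both are one-line observations.
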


\begin{proof} 
Since $-1\equiv  {\underline \l} \mod \Leo\lambda$, for any $b^*\, (b\in \bB)$ we have  $-b^*\equiv b^*{\underline \l} \mod \Leo\l$. Hence  
any $b^*\, (b\in \bB)$ with $|b|\geq |\l|$ is either in $L_{\lambda}$ or congruent modulo $L_{\lambda}$ to linear combinations of $c^*\, (c\in \bB)$ with $|c|\leq |\l| -1$. Consequently, $\Lambda^*$ is the sum of $L_{\lambda}$ and the $K$-space generated by the finite set $\{b^*\,|\, |b|\leq |\l|-1, b\in \bB\}$ and hence $L_{\lambda}$ is a finite codimensional left ideal of $\Lambda^*$. In particular, if $|\l|=1$ then  $L_{\lambda}$ and hence $\Leo\l$ (by Corollary~\ref{arafp}) are maximal left ideals of $\Lambda^*$ and $\Leo$, respectively.

Assume now  that a cofactor $\l_b$ of $\l$ is not zero for some monomial $b\in \bB$, that is, $b$ is the head of some maximal monomial in $\l$. Then
\begin{align}\label{eq3}  
-b^*\equiv b^*{\underline \l}=\sum k_jt_b(j)^*+\l_b \Longrightarrow  -b^*-\sum k_jt_b(j)^*\equiv \l_b  \mod \Leo\l,
\end{align}
where $k_j$ is the coefficient of a proper head $h_b(j)\, (j\in \{1, \cdots, |b|-1\})$ of $b$ which is also a monomial in $\l$. This 
implies $\l_b\in \Lambda^*+\Leo\l$.
  Since by Lemma \ref{pre1} we get $\l_b\notin\Leo\lambda$,  we conclude     that $V_\l$ is canonically  embedded into  $\Lambda^*/L_{\lambda}$.
On the other hand, the congruences $-x^*_i\equiv \l_{x_i}$ modulo $\Leo\l$ and an obvious induction based on congruences \eqref{eq3},   imply that any monomial $c^*$ of length $\leq |\l|$ is a linear combination of cofactors $\l_b$ of $\l$ modulo $\Leo\l$, so that $V_\l$ maps surjectively onto $\Lambda^*/L_{\lambda}$. So we have that the $K$-vector spaces $V_{\lambda}$ and $\Lambda^*/L_{\lambda}$ are isomorphic, and hence $\codim L_{\lambda}= \dim V_\l$.  
Finally, by  congruences \eqref{eq3}, we easily get that  $V_\l+\Leo{\lambda}=V_\l \oplus \Leo\lambda= \Lambda^*+ \Leo\l$. 
\end{proof}
Notice that the proof of Lemma \ref{pre4} implies the following important equality
$$\Lambda^*+\Lambda^*\l=V_\l+\Lambda^*\l=V_\l \oplus \Lambda^*\l.$$

Together with the algorithm described in the proof of  \cite[Lemma~ 3.2]{rr}, the proof of Lemma \ref{pre4} yields  an algorithm to write down a strong Schreier basis of $\Lambda^*/L_{\lambda}$ which generates a direct complement $V$ of $L_{\lambda}$ with respect to $\Lambda^*$. Then one can use the projection of $\Lambda^*$ onto $V$ along $L_{\lambda}$ to write down a basis of $L_{\lambda}$. This algorithm for getting bases of finitely-codimensional left ideals plays a crucial role in finding polynomials similar to one with nonzero  constant term, {for instance see Examples \ref{ex0} and \ref{ex1}}.  

As a direct consequence of the proof to Lemma \ref{pre3} we have
\begin{corollary}\label{iso1} The  ${\Lambda}$-module $V_\l$ and the $\Lambda^*$-module $ {\Lambda^*}/{L_{\lambda}}$ are isomorphic with respect to the canonical ring isomorphism $\Lambda^* \cong \Lambda$. %defined  by the $\ast$-omitting rule $b^*\mapsto {^\tau}b\,\, (b\in \bB)$. 
Moreover, if $\l$ is linear, then $\codim{L_{\lambda}}=1$  and $\Leo/\Leo\l$ is a simple $\Leo$-module whose endomorphism ring is $K$.
\end{corollary}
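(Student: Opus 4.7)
The plan for the first assertion is to upgrade Lemma \ref{pre4}, which already provides a $K$-linear isomorphism $V_\l \cong \Lambda^*/L_\l$ realized by the composition $V_\l \hookrightarrow \Leo \to \Leo/\Leo\l \cong (\Lambda^*+\Leo\l)/\Leo\l$, to a module isomorphism with respect to the canonical ring isomorphism $\Lambda \cong \Lambda^*$ sending $x_i \mapsto x_i^*$. First I would exploit comonicity: from $\l = 1 + \underline\l$ we get $1 \equiv -\underline\l = -\sum_j x_j \l_{x_j} \pmod{\Leo\l}$, and multiplying by $x_i^*$ on the left yields the key congruence
$$x_i^* \equiv -\l_{x_i} \pmod{\Leo\l}.$$
Then a direct computation for an arbitrary $\g = k + \sum_j x_j \g_{x_j} \in \Lambda$ gives
$$x_i^* \cdot \g = k\, x_i^* + \g_{x_i} \equiv -k\l_{x_i} + \g_{x_i} = x_i \star_\l \g \pmod{\Leo\l},$$
so the $\star_\l$-action of $x_i$ on $\g$ agrees with left multiplication by $x_i^*$ on its residue class. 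Restricting to $V_\l$ delivers the desired module isomorphism.

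For the \emph{moreover} part, when $\l$ is linear we may write $\l = 1 + \sum_{i=1}^n k_i x_i$ with some $k_i \neq 0$. The only nonzero iterated cofactors of $\l$ are the scalars $k_i$, hence $V_\l = K$ is one-dimensional and Lemma \ref{pre4} yields $\codim L_\l = 1$. Thus $L_\l$ is a maximal left ideal of $\Lambda^*$, and by the first part $\Lambda^*/L_\l \cong V_\l = K$ carries the action $x_i^* \cdot 1 = -k_i$. Since not all $k_i$ vanish, this one-dimensional simple module is not isomorphic to the trivial module $\Lambda^*/I^*$, so it is a simple Leavitt module whose annihilator is a finite-codimensional maximal two-sided ideal distinct from $I^*$.

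By Corollary \ref{flatpro}(1) one has $\Leo L_\l = \Leo\l$, and Corollary \ref{arafp} guarantees that the functor $M \mapsto \Leo \otimes_{\Lambda^*} M$ fully faithfully embeds the Leavitt category into $\frak{FPL}$ while preserving endomorphism rings. Applying it to the simple Leavitt module $\Lambda^*/L_\l$ produces $\Leo/\Leo\l$, which is therefore a simple $\Leo$-module with
$$\End_\Leo(\Leo/\Leo\l) \cong \End_{\Lambda^*}(\Lambda^*/L_\l) \subseteq \End_K(K) = K;$$
since $K$ acts by scalar multiplication, the containment is an equality. The only delicate point I foresee is tracking the equivariance of the Lemma \ref{pre4} isomorphism through the canonical identification $\Lambda \cong \Lambda^*$; once the congruence $x_i^* \equiv -\l_{x_i} \pmod{\Leo\l}$ is in hand, the remainder is a straightforward appeal to the Leavitt-algebra results already assembled in Section \ref{leat}.
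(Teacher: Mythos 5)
Your proof is correct and takes essentially the same route as the paper: you make explicit the equivariance check that the paper dismisses as ``straightforward to verify,'' via the key congruence $x_i^*\equiv -\l_{x_i}\pmod{\Leo\l}$, and then handle the \emph{moreover} part with the Leavitt-algebra machinery of Section~\ref{leat}. The paper cites Corollary~\ref{flatpro} where you cite Corollary~\ref{arafp}, but both invocations draw on the same localization argument.
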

\begin{proof}
It is straightforward to verify that the $K$-isomorphism between $V_{\lambda}$ and $\Lambda^*/L_{\lambda}$ given in Lemma \ref{pre4} is $\Lambda$-linear  with respect to the canonical ring isomorphism $\Lambda \cong \Lambda^*$. %defined by the $\ast$-adding rule $b=x_{i_1}\cdots x_{i_l}\mapsto \,\,x^*_{i_1}\cdots x^*_{i_l} (b\in \bB)$. 
The last statement follows by Corollary~\ref{flatpro}.
\end{proof}
By the results in Section~\ref{leat}, we immediately get the following 
\begin{corollary}\label{proof3} The $\Leo$-module $M_\l=\Leo/\Leo\l$ is finitely presented and of finite length. 
\end{corollary}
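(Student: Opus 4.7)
The plan is to deduce both claims directly from results already established earlier in Section \ref{leat} together with Lemma \ref{pre4}, with no further computation needed.

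First I would observe that $\Leo \l$ is by construction a principal (hence finitely generated) left ideal of $\Leo$. Therefore the short exact sequence
\[
0 \longrightarrow \Leo\l \longrightarrow \Leo \longrightarrow M_\l \longrightarrow 0
\]
exhibits $M_\l$ as the cokernel of a map between finitely generated free $\Leo$-modules, so $M_\l$ is finitely presented. This is the easy half.

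For the finite length statement, I would apply Corollary \ref{flatpro}(4), which asserts that a left ideal $\frak L$ of $\Leo$ has finite colength if and only if $L = \Lambda^* \cap \frak L$ has finite codimension in $\Lambda^*$. Setting $\frak L = \Leo \l$, the corresponding intersection is precisely $L_\l = \Lambda^* \cap \Leo\l$, and Lemma \ref{pre4} tells us that $L_\l$ is finite-codimensional in $\Lambda^*$ (with codimension equal to $\dim V_\l$, although that refinement is not needed here). Combining these two facts gives that $\Leo\l$ has finite colength, i.e. $M_\l = \Leo/\Leo\l$ has finite length.

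There is no real obstacle: the entire content of the corollary has been packaged into the preceding lemmas and corollaries, and the proof is merely a citation. The only conceptual point worth noting is that finite presentation relies on $\Leo\l$ being principal (an artifact of our setup) while finite length passes through the bridge between $\Leo$-ideals and their $\Lambda^*$-intersections supplied by Corollary \ref{flatpro} together with the explicit finite-codimensional description of $L_\l$ obtained in the proof of Lemma \ref{pre4}.
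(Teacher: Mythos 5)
Your proof is correct and is the natural filling-in of the paper's terse ``by the results in Section \ref{leat}.'' Finite presentation comes for free from $\Leo\l$ being principal, and finite length follows from Corollary \ref{flatpro}(4) once one knows $L_\l=\Lambda^*\cap\Leo\l$ is finite-codimensional, which is exactly what Lemma \ref{pre4} supplies; that lemma lives in Section \ref{pro} rather than Section \ref{leat}, so the paper's pointer is a touch loose, but your citation chain is the right one and no step is missing.
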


\smallskip

It is time to better understand why and how interrelations between $\Lambda, \Lambda^*$ and the  modules $\Leo/\Leo\l=M_\l$,   $V_\l$, $\Lambda^*/L_{\lambda}$ are crucial for the aim of this work. First, as we already mentioned, the rule $x_i\mapsto x^*_i$ induces both the natural canonical algebra isomorphism and the canonical anti-isomorphism between $\Lambda$ and $\Lambda^*$.  The canonical isomorphism induces the module isomorphism between $V_{\Lambda}$ and ${\Lambda^*}/L_{\lambda}$ as we already mentioned in Remark \ref{nre1} in the case of one variable . Moreover, thanks to results in Section~\ref{leat}, the $\Lambda^*$-module  $\Lambda^*/L_{\lambda}$ and the $\Leo$-module $\Leo/\Leo\l$ share important features via localization, and also in terms of finiteness  or dimensions. Thus  investigating the $\Lambda$-module  $V_\l$ or the $\Leo$-module  $\Leo/\Leo\l$  leads to a deeper study of $\l$ and turns out to be a direct, more appropriate way to study the factorizations of $\l$ than what is proposed by Cohn in \cite{c2}, \cite{c4}. In fact, it provides a way to find polynomials similar to $\l$, to check irreducibility to $\l$ as well as its factorization. The canonical anti-isomorphism induces the symmetry between left and right modules over free algebras, so that results on factorization and divisibility  we will get on the right  can be stated also on the left.
\smallskip
{As a first application of these ideas we have an easily used criterion for divisibility.
\begin{theorem}\label{test} If $\l$ is a {comonic} polynomial  and $\g$ is an arbitrary polynomial in $\Lambda$, then $\l$ is a right factor of $\g$ if and only if $b\star_\l \g=0$ for all monomials $b\in \bB$ of degree $|\g|$. There is an algorithm to find the left quotient $\d$ of $\g$ satsisfying $\g=\d\l$. Symmetrically, $\l$ is a left factor of $\g$ if and only if $\g \star_\l b=0$ for all monomials $b\in \bB$ of degree $|\g|$ and one can find successively the right quotient $\d$ of $\g$ satisfying $\g=\l \d$.
\end{theorem}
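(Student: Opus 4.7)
The plan is to exploit the $\Leo$-module structure on $\Lambda/\Lambda\l$ transferred from Lemma~\ref{pre3} and Remark~\ref{star}, under which $x_j^*$ acts as $\gamma+\Lambda\l\mapsto (x_j\star_\l\gamma)+\Lambda\l$. Combining this with the involution of $\Leo$ from Corollary~\ref{sub2}, for any monomial $b=x_{i_1}\cdots x_{i_l}\in\bB$ with transpose ${}^\tau b=x_{i_l}\cdots x_{i_1}$, the action of $b^*=x_{i_l}^*\cdots x_{i_1}^*$ unfolds to $b^*\cdot(\gamma+\Lambda\l)=({}^\tau b\star_\l\gamma)+\Lambda\l$ by repeated application of the module axiom $(ab)\star_\l m=a\star_\l(b\star_\l m)$. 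This compatibility between the two involutions is the technical linchpin connecting the $\star_\l$-action to divisibility.

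For the forward direction, assuming $\g=\d\l$, I would verify by induction on $|b|$ that $b\star_\l(\d\l)=\d_{{}^\tau b}\,\l$ for every $b\in\bB$. The base case $|b|=1$ is the identity $x_j\star_\l(\d\l)=\d_j\l$ already noted in Section~\ref{prif}, which follows directly from the definition of $\star_\l$ together with $k_\l=1$. The inductive step writes $b=x_i c$, so that ${}^\tau b=({}^\tau c)x_i$, and chains $b\star_\l(\d\l)=x_i\star_\l(c\star_\l(\d\l))=x_i\star_\l(\d_{{}^\tau c}\l)=(\d_{{}^\tau c})_i\l=\d_{{}^\tau b}\l$, the middle step being the base case applied to $\d_{{}^\tau c}$ in place of $\d$. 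Taking $|b|=|\g|$, we get $|{}^\tau b|=|\d|+|\l|>|\d|$, so $\d_{{}^\tau b}=0$ by the cofactor convention, and hence $b\star_\l\g=0$.

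For the reverse direction, I would apply the Leavitt relation $1=\sum_{|b|=|\g|} bb^*$ to $\g+\Lambda\l$ inside the $\Leo$-module $\Lambda/\Lambda\l$, obtaining
$$\g+\Lambda\l\;=\;\sum_{|b|=|\g|} b\cdot\bigl(({}^\tau b\star_\l\g)+\Lambda\l\bigr).$$
Since $b\mapsto{}^\tau b$ is a bijection on monomials of a fixed length, the hypothesis forces every summand to vanish, so $\g\in\Lambda\l$ and $\g=\d\l$ for a uniquely determined $\d\in\Lambda$ (uniqueness by Lemma~\ref{pre1} and Corollary~\ref{proper}). The explicit algorithm for $\d$ falls out of the forward direction: since $\l$ is comonic, the constant term of $b\star_\l\g=\d_{{}^\tau b}\l$ coincides with the constant term of $\d_{{}^\tau b}$, which is precisely the coefficient of the monomial ${}^\tau b$ in $\d$. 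Hence the coefficients of $\d$ are recovered one at a time by computing $({}^\tau c)\star_\l\g$ through iterated application of the defining rule and extracting its constant term, for each monomial $c$ with $|c|\leq|\g|-|\l|$. The symmetric statement about left factors follows either by applying the canonical anti-isomorphism of Corollary~\ref{sub2}, or by repeating the argument verbatim with the right $\star_\l$-action and right cofactors introduced in Remark~\ref{leftright}.

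The only delicate point is faithfully tracking the transposition throughout: the involution of $\Leo$ sends $b\mapsto b^*$, which in turn corresponds to the ${}^\tau b$-action under $\star_\l$, and one must keep this bookkeeping consistent so that the inductive identity $b\star_\l(\d\l)=\d_{{}^\tau b}\l$ matches the expansion coming from $1=\sum bb^*$. Once this is in place, both directions of the criterion are essentially formal consequences of the Leavitt relations combined with the flat-localization identifications of Section~\ref{leat}.
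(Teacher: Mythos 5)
Your proof is correct and follows essentially the same route as the paper's: both hinge on the $\Leo$-module structure transferred to $\Lambda/\Lambda\l$ (Lemma~\ref{pre3}, Remark~\ref{star}), the congruence $b^*\g\equiv{}^\tau b\star_\l\g \pmod{\Leo\l}$, and the partition of unity $1=\sum_{|b|=l}bb^*$ together with Lemma~\ref{pre1} to pass between $\Leo\l$ and $\Lambda\l$. You make the induction and transpose bookkeeping more explicit, and your coefficient-extraction description of the algorithm is a slightly more concrete variant of the paper's recursive reduction $x_i\star_\l\g=\d_i\l$, but there is no genuine difference of method.
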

\begin{proof} By Lemma \ref{pre1}, $\l$ is a right factor of $\g$ if and only if $\g \in \Leo \l$. In view of the equality \eqref{eq10}, $\g \in \Leo \l$ holds if and only if $b^*\g \in \Leo\l$ for all monomials $b\in \bB$ of  degree $l$, where $l$ is an arbitrary positive integer. However $b^*\g$ is clearly congruent to $^{\tau}b\star_\l \g$ modulo $\Leo\l$, as it follows by Lemma \ref{pre4} and the obvious induction. Therefore $\l$ is a right factor of $\g$ if and only if $b\star_\l \g \in \Leo\l$ for all monomials $b$ with $|b|=|\g|$. Since the degree of $b\star_\l \g$ is at most $|\l|-1$, Lemma \ref{pre1} implies $b\star_\l \g=0$ if $|b|=|\g|$. On the other hand,
the equality $\g=\d\l$ implies $x_i\star_\l \g=\g_i\l$ as we have already seen. This reduces the computation of the left quotient $\d$ of $\g$ of one of lower degree. Thus the proof is complete.
\end{proof}}  

The following theorem is the  key tool in proving Theorem \ref{goal1}  and it is, in fact, an algorithm to find a greatest common divisor of two comonic  polynomials in $\Lambda$.

\begin{theorem}\label{main1} Let  $\g$ and  $\d$  be comonic polynomials in $\Lambda$. Then  there is a comonic polynomial $\l \in \Lambda$ such that  $\Leo \d+\Leo \g =\Leo \l$.
\end{theorem}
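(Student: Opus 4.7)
The plan is to proceed by strong induction on the total degree $|\delta|+|\gamma|$. The crucial observation is that since $\gamma$ and $\delta$ are both comonic, their difference $\gamma-\delta$ has zero constant term, and Lemma~\ref{pre2} allows one to replace the left ideal generated by such a polynomial by a sum of principal left ideals whose generators are comonic and of strictly lower degree.

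Assume without loss of generality that $|\delta|\le|\gamma|$. The base and trivial cases are immediate: if $|\delta|=0$ then $\delta=1$ is a unit and $\Leo\delta=\Leo$, so $\lambda=1$ works; and if $\gamma=\delta$ then $\lambda=\delta$ works. Otherwise, set $\eta=\gamma-\delta$. Then $\eta$ has zero constant term, $|\eta|\le|\gamma|$, and clearly $\Leo\delta+\Leo\gamma=\Leo\delta+\Leo\eta$. Applying Lemma~\ref{pre2} to $\eta$ and rescaling the resulting generators, one obtains finitely many comonic polynomials $\mu_1,\dots,\mu_k\in\Lambda$ with $|\mu_j|<|\eta|\le|\gamma|$ such that
\[
\Leo\delta+\Leo\gamma=\Leo\delta+\sum_{j=1}^k\Leo\mu_j.
\]
I would then combine $\delta$ with the $\mu_j$'s two at a time: set $\lambda_0=\delta$ and, for $i=1,\dots,k$, choose inductively a comonic $\lambda_i\in\Lambda$ with $\Leo\lambda_i=\Leo\lambda_{i-1}+\Leo\mu_i$; the polynomial $\lambda:=\lambda_k$ is then the desired generator.

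To justify each inductive call I must verify the size bound $|\lambda_{i-1}|+|\mu_i|<|\delta|+|\gamma|$. Since $\Leo\delta\subseteq\Leo\lambda_{i-1}$ and $\lambda_{i-1}$ is comonic, Lemma~\ref{pre1} yields $\delta\in\Leo\lambda_{i-1}\cap\Lambda=\Lambda\lambda_{i-1}$, and the degree additivity in the free algebra $\Lambda$ then gives $|\lambda_{i-1}|\le|\delta|$; combined with $|\mu_i|<|\gamma|$, this delivers the required bound.

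The main conceptual obstacle is that the naive Euclidean reduction ``write $\gamma=q\delta+r$ with $r$ of lower degree'' is unavailable in this noncommutative setting: the difference $\gamma-\delta$ need not have lower degree than $\gamma$. What rescues the argument is that $\gamma-\delta$ has zero constant term instead, and Lemma~\ref{pre2} converts this constantless defect into a strict drop in degree at the price of possibly enlarging the number of generators; the induction on total degree absorbs this enlargement because each intermediate comonic generator $\lambda_{i-1}$ is forced, via Lemma~\ref{pre1}, to be a right divisor of $\delta$ in $\Lambda$ and hence to have degree at most $|\delta|$.
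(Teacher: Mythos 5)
Your proof is correct, and it takes a genuinely different route from the paper's. The paper's proof is an explicit Euclidean-style algorithm: it inducts on $m=\max(|\delta|,|\gamma|)$, and in the inductive step uses Theorem~\ref{test} (and hence the $\star_\delta$-action and the module $V_\delta$) to produce a nonzero remainder $b_1\star_\delta\gamma\in V_\delta$ of degree at most $|\delta|-1$; this remainder replaces $\gamma$ after being rendered comonic via Lemma~\ref{pre2}, and iterating gives a strictly decreasing chain of degrees. Your argument bypasses the $\star_\delta$-machinery and $V_\delta$ entirely, as well as Corollary~\ref{iso1} for the base case, and uses only the elementary Lemmas~\ref{pre1} and~\ref{pre2}. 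Your key observations are: (i) the difference $\eta=\gamma-\delta$ has zero constant term, so Lemma~\ref{pre2} replaces $\Leo\eta$ by a sum of principal left ideals with comonic generators of degree strictly below $|\gamma|$; and (ii) the degree control $|\lambda_{i-1}|\le|\delta|$ holds because $\Leo\delta\subseteq\Leo\lambda_{i-1}$ forces $\delta\in\Lambda\lambda_{i-1}$ by Lemma~\ref{pre1}, so $\lambda_{i-1}$ right-divides $\delta$ in $\Lambda$ and degree additivity bounds it. That bound is exactly what lets the induction on $|\delta|+|\gamma|$ absorb the blow-up in the number of generators. The trade-off is one of constructiveness: the paper's proof exhibits Theorem~\ref{main1} as a concrete Euclidean algorithm, iterated remainders in $V_\delta$, which the authors lean on directly for the division/membership results (Theorem~\ref{divi} and the surrounding discussion); your proof is shorter and more self-contained as a pure existence argument, but unwinding its nested recursion through the pairs $(\lambda_{i-1},\mu_i)$ would give a less transparent procedure. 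Both are valid; yours is the more economical existence proof, the paper's the more useful algorithm.
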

\begin{proof} Without loss of generality one can assume that $\d$ and $\g$ are proper polynomials with $|\d| \leq |\g|=m$ and none of them is contained in the left ideal of $\Leo$ generated by the other. We use induction on $m$. The claim is obvious in view of Corollary \ref{iso1} if $m=1$. So assume $m>1$ and  put $\fL=\Leo \d+\Leo \g$. 
If $\fL=\Leo$, one takes $\l=1$. Hence one can assume $\fL\neq \Leo$. Since $\g$ does not belong to $\Leo\d$, by Theorem~\ref{test}  there exits a monomial $b_1\in \bB$ with $|b_1|=\deg \g$ such that $0\neq b_1\star_\d \g=\d_1 \in V_\d$ and $|\d_1|\leq |\d|-1$. Since $\d_1=b_1\star_\d \g \in b^*_1\d + \Leo\d$ one has $\Leo\d_1\subseteq \Leo \g+\Leo\d=\fL$, and so $\d_1\in \fL$.
%Moreover, by Remark~\ref{star}, since the $\star_\d$ action endows $\Lambda/\Lambda\d$ with a $\Leo$-module structure, we get that $b_1\star_\d \gamma+\Lambda\d$ belongs to $\Leo\gamma+\Leo\d$, and so $\delta_1\in \fL$. 
Hence we conclude $\Leo\delta_1+\Leo\delta\leq \fL$.
%Lemmata \ref{pre3}, \ref{pre4} together with the equality \eqref{eq10} and the remark after Lemma \ref{pre3} there is a monomial $b_1\in \bB$ with $|b_1|=\deg \g$ such that $0\neq b_1\star_\d \g=\d_1 \in V_\d \cap \frak L$. Hence $|\d_1|\leq |\d|-1$ and $\Leo\delta_1+\Leo\delta\leq \fL$.
 It is possible that $\d_1$ is a polynomial without constant. However, by Lemma \ref{pre2} one can replace $\d_1$ by a comonic polynomial in $\Leo\d_1 \cap \Lambda$. Thus one can assume  $\d_1$ comonic. If $\g \notin \Lambda \d_1$, then one obtains in the same manner $0\neq b_2\star_{\d_1} \g=\d_2 \in V_{\d_1}\cap \frak L$ comonic, $|\d_2|\leq |\d_1|-1\leq |\d|-2$ and  $\Leo\d_2+\Leo\d\leq  \frak L$. If $\g \notin \Leo\d_2$ we continue this process and after at most $|\g|-1$ steps we find $\m \in \Lambda\cap \frak L$ with constant such that $|\m|\leq |\d|-1$ with $\g \in \Lambda \m$, so that $\Leo \m+\Leo\delta=\fL$.   
If $|\d| \leq m-1$, then the theorem follows from the induction hypothesis. If $|\d|=m$, then changing the role of $\d$ with $\m$ and $\g$ with $\d$ we will obtain again the claim after repeating the above argument. Therefore the proof is complete.    
\end{proof}

Thanks to Theorem~\ref{main1}, one obtains immediately the first claim of Theorem \ref{goal1} describing comonic irreducible polynomials in terms of simple modules,  as for one variable.

\begin{corollary}\label{step1}
Let $\lambda$ be a comonic polynomial in $\Lambda$. 
\begin{enumerate}
\item   $\lambda$ is irreducible if and only if $\Leo/\Leo\l$ is a simple left $\Leo$-module.
\item $\lambda$ is irreducible if and only if $\Lambda^*/L_\l$ is a simple $\Lambda^*$-module.
\item $\lambda$ is irreducible if and only if $V_\l$  is a simple $\Lambda$-module.  
\end{enumerate}
In particular, if $\lambda$ is irreducible, then  $D=\End(M_\l)\cong \End(\Lambda^*/L_\l)\cong \End(V_\l)$ is a finite-dimensional division $K$-algebra, hence  $D\cong K$ if $K$ is algebraically closed.
\end{corollary}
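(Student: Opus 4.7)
The plan is to establish the three equivalences in the order $(1)\Rightarrow(2)\Rightarrow(3)$, using Theorem~\ref{main1} as the main engine for~(1), a flatness argument together with a crucial non-triviality observation for~(2), and the identification of Corollary~\ref{iso1} for~(3). The division-algebra assertions will then follow from Schur's lemma combined with Corollary~\ref{arafp}.

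For~(1), the easy direction is that a nontrivial factorization $\l=\rho\mu$, renormalized so that $\mu$ is comonic of positive degree, produces a strict chain $\Leo\l\subsetneq\Leo\mu\subsetneq\Leo$ by Corollary~\ref{proper}. The converse is the heart of the argument. Given $\Leo\l\subsetneq\frak L\subsetneq\Leo$, I first extract a comonic polynomial $\gamma\in\frak L\setminus\Leo\l$: pick any $r\in\frak L\setminus\Leo\l$, replace it by an element of $\Lambda$ in the same coset modulo $\Leo\l$ using Lemma~\ref{pre3}, and, if that element has zero constant term, replace it by one of its comonic lower-degree summands furnished by Lemma~\ref{pre2} (at least one of which must remain outside $\Leo\l$). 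Theorem~\ref{main1} now produces a comonic $\mu\in\Lambda$ with $\Leo\l+\Leo\gamma=\Leo\mu\subseteq\frak L$, and Lemma~\ref{pre1} applied to $\l\in\Leo\mu\cap\Lambda$ yields a polynomial factorization $\l=\rho\mu$. The strict chain $\Leo\l\subsetneq\Leo\mu\subsetneq\Leo$ together with Corollary~\ref{proper} forces both factors to be non-units, contradicting irreducibility.

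For~(2), the reducibility direction reuses the factorization $\l=\rho\mu$: setting $L_\mu=\Lambda^*\cap\Leo\mu$ yields $L_\l\subsetneq L_\mu\subsetneq\Lambda^*$. For the converse, suppose $L_\l\subsetneq L'\subsetneq\Lambda^*$. Then $\Leo L'\supsetneq\Leo\l$, because $\Leo L'=\Leo\l$ would give $L'\subseteq\Lambda^*\cap\Leo\l=L_\l$, and the simplicity of $\Leo/\Leo\l$ from~(1) forces $\Leo L'=\Leo$, hence $(I^*)^l\subseteq L'$ for some $l$ by Corollary~\ref{flatpro}(3). Consequently $\Lambda^*/L'$ is a nonzero Leavitt-torsion quotient of $\Lambda^*/L_\l$, so it admits a trivial simple quotient $K=\Lambda^*/I^*$, equivalently $L_\l\subseteq I^*$. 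This is the main obstacle, since naive lattice comparison does not suffice: the operation $L'\mapsto\Leo L'$ can collapse proper inclusions into $\Leo L'=\Leo$. It is overcome by an explicit witness from congruence~\eqref{eq3}: for any maximal monomial $b$ of $\l$ with coefficient $k\neq 0$, the polynomial $k+b^*+\sum k_j t_b(j)^*$ lies in $L_\l$ with nonzero constant term $k$, so $L_\l\not\subseteq I^*$, which is the required contradiction. The equivalence~(3) follows immediately from~(2) via Corollary~\ref{iso1}, which identifies $V_\l$ with $\Lambda^*/L_\l$ as modules along the canonical ring isomorphism $\Lambda\cong\Lambda^*$.

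For the final assertions, Schur's lemma applied to the simple $\Lambda$-module $V_\l$ shows that $D=\End(V_\l)$ is a division $K$-algebra, finite-dimensional because $V_\l$ is. Corollary~\ref{iso1} supplies $\End(V_\l)\cong\End(\Lambda^*/L_\l)$, and since $\Lambda^*/L_\l$ is simple and not isomorphic to the trivial module $K$ (for $L_\l=I^*$ would force $\Leo\l=\Leo$ in conflict with Corollary~\ref{proper}), it is a Leavitt module; Corollary~\ref{arafp} then supplies $\End(\Lambda^*/L_\l)\cong\End(M_\l)$, completing the chain $D\cong\End(M_\l)\cong\End(\Lambda^*/L_\l)\cong\End(V_\l)$. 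When $K$ is algebraically closed, the only finite-dimensional division $K$-algebra is $K$ itself, so $D\cong K$.
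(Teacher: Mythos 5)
Your proposal is correct and follows essentially the same route as the paper: both use Theorem~\ref{main1} (via Lemmas~\ref{pre1}, \ref{pre2}, \ref{pre3}) to settle~(1), Corollary~\ref{flatpro}(3) combined with the nonzero-constant witness $b^*\l$ from congruence~\eqref{eq3} to settle~(2), and Corollary~\ref{iso1} for~(3) and the endomorphism-ring identifications. The only minor cosmetic difference is in part~(2), where the paper concludes directly that the image of $J$ in the finite local ring $\Lambda^*/(I^*)^l$ is everything because it contains a unit, whereas you reach the same contradiction by observing that the Leavitt-torsion structure of $\Lambda^*/L'$ would force $L_\l\subseteq I^*$, against the same witness; these are two phrasings of the same step.
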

\begin{proof}
$1)$ Assume $\lambda$ irreducible.  In order to show that $\Leo\lambda$ is maximal  we have to see that $\Leo\lambda+\Leo l=\Leo$ for any $l\in \Leo$.
By Lemma~\ref{pre3} we can reduce to show that $\Leo\lambda+\Leo\alpha=\Leo$ for any $\alpha\in \Lambda$. By Lemma~\ref{pre2} we can assume $\alpha$ comonic, and so the result follow from Theorem~\ref{main1}. 

Conversely,  if $\l$ is reducible, then $\l=\gamma_1\gamma_2$, with $\gamma_i$ comonic and of degrees strictly less than $\lambda$. Hence $\Leo\lambda\leq \Leo\gamma_2\leq \Leo$ and, by Corollary~\ref{proper}, the inclusions are proper since $\gamma_2\notin \Lambda\lambda$. So, if  $\Leo\lambda$ is a maximal left ideal, then $f$ has to be irreducible.

$2)$ Assume $\l$ irreducibile.  Let $J$ be any proper left ideal of $\Lambda^*$ strictly containing $L_\l$.
Then $\Leo J=\Leo$ holds because $\Leo\l$ is a maximal left ideal of $\Leo$ by the previous point. Consequently, Corollary \ref{flatpro}(3) shows that $J$ contains a power $(I^*)^l$ for some $l\in \N$. This means that ${\overline J}=J/(I^*)^l$ is a left ideal of $A=\Lambda^*/(I^*)^l$ which is a finite-dimensional local ring. However, since by assumption $\l$ is comonic, its constant term is $1$ and  hence for a maximal monomial $b$ of $\l$ we get $b^*\l= b^*+\cdots +k_b$ where $k_b$ is the non-zero coefficient of $b$ in $\l$. So $b^*\l$ is in $L_\l$ and its  image in $A$ is a unit of $A$. Thus   ${\overline J}=A$ holds. This implies $J=\Lambda^*$, proving that $L_\l$ is maximal in $\Lambda^*$.

Conversely, if $L_\l$ is maximal in $\Lambda^*$, then $\Leo\l$ is maximal by results in Section~\ref{leat} and Corollary \ref{proper}. 

$3)$ This follows  immediately since $V_\d$ and $\Lambda^*/L_\d$ are isomorphic  with respect to the $\ast$-omitting ring isomorphism

\end{proof}

We note that when $n=1$, then the field $D$ of Corollary \ref{step1} is precisely the field extension $K[x]/K[x]\l$ of $K$ by $\l$.

\begin{corollary}\label{acc}If a left ideal $\frak L$ of $\Leo$ contains some $\g \in \Lambda$, then $\frak L=\Leo\l$ for some comonic $\l \in \Lambda$. In particular,  it  has finite colength and $\End(\Leo /\frak L)$ is finite dimensional.
\end{corollary}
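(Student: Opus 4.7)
The plan is to reduce $\frak L$ in stages until it has a single comonic generator in $\Lambda$, after which the ``in particular'' claims follow from the machinery already set up. First, I would ensure $\g$ itself is comonic: if $\g$ has zero constant term, Lemma~\ref{pre2} writes $\Leo\g = \sum \Leo\gamma_i$ with each $\gamma_i$ comonic (after scaling), and each $\gamma_i \in \Leo\g \subseteq \frak L$, so I can replace $\g$ by any such $\gamma_i$. Next, I would show $\frak L$ is finitely generated of finite colength: by Lemma~\ref{pre4}, $L_\g = \Lambda^* \cap \Leo\g$ has finite codimension in $\Lambda^*$, and since $\Lambda^* \cap \frak L \supseteq L_\g$, it too has finite codimension, so Corollary~\ref{flatpro}(4) applies.

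Next, I would produce comonic generators in $\Lambda$. Writing $\frak L = \sum_{i=1}^N \Leo t_i$ and using $\Lambda + \Leo\g = \Leo$ from Lemma~\ref{pre3}, I can decompose $t_i = l_i + s_i\g$ with $l_i \in \Lambda$ and $s_i \in \Leo$. Since both $t_i$ and $\g$ lie in $\frak L$, so does $l_i = t_i - s_i\g$, and $\Leo t_i \subseteq \Leo l_i + \Leo\g$ gives $\frak L = \Leo\g + \sum_i \Leo l_i$ with $\g, l_i \in \Lambda \cap \frak L$. A second application of Lemma~\ref{pre2} to any $l_i$ without constant term then yields a finite generating set $\mu_1, \ldots, \mu_M \in \Lambda$ of $\frak L$ with every $\mu_k$ comonic. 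Finally, iterating Theorem~\ref{main1}---which fuses two comonic polynomials $\mu_1, \mu_2$ into a single comonic $\nu$ with $\Leo\mu_1 + \Leo\mu_2 = \Leo\nu$---after $M-1$ steps collapses the sum to $\frak L = \Leo\l$ for some comonic $\l \in \Lambda$, the main assertion.

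Finite colength is already in hand and also follows directly from Corollary~\ref{proof3}. For $\End_\Leo(\Leo/\frak L) = \End_\Leo(\Leo/\Leo\l)$, I would observe that $\Leo/\Leo\l$ is a finitely presented $\Leo$-module of finite length, whose simple composition factors $\Leo/\Leo\p_i$ (coming from the irreducible factors $\p_i$ of $\l$) have finite-dimensional division endomorphism algebras over $K$ by Corollary~\ref{step1}; a routine filtration argument, or equivalently the category equivalence of Corollary~\ref{arafp}, then gives finite-dimensionality of $\End_\Leo(\Leo/\Leo\l)$. The only delicate bookkeeping I anticipate is verifying that the Lemma~\ref{pre2} reductions and the adjustment modulo $\Leo\g$ both preserve membership in $\frak L$; this is automatic from $\Leo\g \subseteq \frak L$ and $\Leo l_i \subseteq \frak L$ whenever $l_i \in \frak L$, so the whole reduction is essentially mechanical once the three ingredients---Lemma~\ref{pre2}, Lemma~\ref{pre3}/Lemma~\ref{pre4}, and Theorem~\ref{main1}---are strung together in the right order.
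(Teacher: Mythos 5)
Your proof is correct and takes a genuinely different route to the same endpoint. Both you and the paper ultimately produce finitely many comonic polynomials in $\Lambda\cap\frak L$ and then collapse them into one generator by iterating Theorem~\ref{main1}, and both invoke Lemma~\ref{pre2} to fix the zero-constant cases; the difference is how the finite generating set in $\Lambda$ is obtained. The paper intersects $\frak L$ with the direct-sum decomposition $\Lambda^*+\Leo\g = V_\g\oplus\Leo\g$ from Lemma~\ref{pre4} and applies the modular law to get $\frak L\cap(V_\g\oplus\Leo\g)=W\oplus\Leo\g$ with $W=V_\g\cap\frak L$, then uses Corollary~\ref{flatpro}(1) to pull back to $\frak L=\Leo W+\Leo\g$; since $W$ is a finite-dimensional subspace of $\Lambda$ a basis of $W$ does the job. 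You instead first observe that $L_\g\subseteq\Lambda^*\cap\frak L$ has finite codimension by Lemma~\ref{pre4}, so Corollary~\ref{flatpro}(4) gives $\frak L=\sum_{i=1}^N\Leo t_i$ finitely generated, and then you use the surjection $\Lambda+\Leo\g=\Leo$ of Lemma~\ref{pre3} to split each $t_i=l_i+s_i\g$ with $l_i\in\Lambda\cap\frak L$; this yields $\frak L=\Leo\g+\sum_i\Leo l_i$ with $\Lambda$-generators. Your route is slightly longer (it first passes through finite generation of $\frak L$ as an $\Leo$-ideal) but it is a bit more transparent, since the paper's modular-law step together with Corollary~\ref{flatpro}(1) requires a short hidden argument to conclude $\frak L=\Leo W+\Leo\g$, whereas your splitting is explicit. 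On the other hand, the paper's approach highlights the role of $V_\g$ as a designated finite-dimensional ``space of remainders'' that already carries the relevant information. Your handling of the ``in particular'' part (via Corollary~\ref{proof3} or a filtration of $\Leo/\Leo\l$ by simple factors $\Leo/\Leo\p_i$ with finite-dimensional endomorphism rings, or by Corollary~\ref{arafp}) is correct and in line with the paper's implicit reliance on the preceding results.
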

\begin{proof} By assumption $\frak L \cap \Lambda^*$ contains $L_\g=\Leo\g \cap \Lambda^*$. By Lemma~\ref{pre2} we can assume $\g$ comonic. By Lemma \ref{pre4}, applying the modular law we get $\frak L\cap (\Lambda^*+ \Leo\g)=\frak L \cap (V_\g+\Leo\g)=W+\Leo\g$ where $W=V_\g \cap \frak L$. Hence Corollary \ref{flatpro}(1) yields $\frak L=\Leo\g+\Leo W$. Since $W$ is a finite-dimensional subspace of $\Lambda$ the proof is immediately completed by successive applications of Theorem \ref{main1}.
\end{proof}
The previous result has a nice important consequence.
\begin{corollary}\label{distri} If $\l \in \Lambda$ is comonic, then there is a bijection between right factors of $\l$ (up to a scalar) and left ideals of $\Leo$ containing
$\l$. Hence the submodule lattice of $\Leo/\Leo\l$ is isomorphic to the distributive lattice ${\bf L}(\Lambda\l, \Lambda)$ of principal left ideals of $\Lambda$ containing $\l$.
\end{corollary}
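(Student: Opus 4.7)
The plan is to pair each right factor $\d$ of $\l$ with the left ideal $\Leo\d$, and then to pass to the truncation $\Leo\d\cap\Lambda=\Lambda\d$ in order to land in ${\bf L}(\Lambda\l,\Lambda)$. The whole argument is essentially a triangulation of Lemma~\ref{pre1}, Corollary~\ref{proper} and Corollary~\ref{acc}.

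First I would establish the bijection between scalar equivalence classes of right factors of $\l$ and left ideals of $\Leo$ containing $\l$. Since $\l$ is comonic, any right factor $\d$ has nonzero constant term, so each scalar class contains a unique comonic representative, and rescaling does not alter $\Leo\d$. If $\l=\a\d$, then clearly $\l\in\Lambda\d\subseteq\Leo\d$. Conversely, given a left ideal $\frak L$ of $\Leo$ with $\l\in\frak L$, Corollary~\ref{acc} produces a comonic $\m\in\Lambda$ with $\frak L=\Leo\m$, and Lemma~\ref{pre1} then forces $\l\in\Leo\m\cap\Lambda=\Lambda\m$, exhibiting $\m$ as a right factor of $\l$. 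Injectivity of $\m\mapsto\Leo\m$ on comonic right factors is precisely Corollary~\ref{proper}, so the assignment is a bijection.

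For the lattice statement, the left ideals of $\Leo$ containing $\l$ coincide with those containing $\Leo\l$, hence correspond order-preservingly to submodules of $\Leo/\Leo\l$. Define $\Phi(\Leo\d)=\Lambda\d$. By Lemma~\ref{pre1} this equals $\Leo\d\cap\Lambda$, which makes $\Phi$ both inclusion-preserving and inclusion-reflecting: $\Leo\d_1\subseteq\Leo\d_2$ iff $\d_1\in\Leo\d_2\cap\Lambda=\Lambda\d_2$ iff $\Lambda\d_1\subseteq\Lambda\d_2$. Surjectivity onto ${\bf L}(\Lambda\l,\Lambda)$ is immediate, since a principal left ideal $\Lambda\d$ contains $\l$ precisely when (the comonic rescaling of) $\d$ is a right factor of $\l$, and then $\Phi(\Leo\d)=\Lambda\d$. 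Hence $\Phi$ is a lattice isomorphism between the submodule lattice of $\Leo/\Leo\l$ and ${\bf L}(\Lambda\l,\Lambda)$.

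The main obstacle is not the lattice isomorphism itself — which is a routine assembly of the lemmas just proved — but the word \emph{distributive}, which is not delivered by the construction above. This is an external input from Cohn's theory of free ideal rings, where for any element in a 2-fir the lattice of its principal one-sided factors is known to be distributive. Granting that, distributivity transports through $\Phi$ to the submodule lattice of $\Leo/\Leo\l$, completing the proof.
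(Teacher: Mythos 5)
Your proposal is correct and follows essentially the same route as the paper: the surjectivity onto left ideals containing $\Leo\l$ via Corollary~\ref{acc}, the truncation $\Leo\d\cap\Lambda=\Lambda\d$ via Lemma~\ref{pre1}, injectivity via Corollary~\ref{proper}, and the explicit deferral of distributivity to Cohn — the paper cites \cite[Theorem 4.3.3]{c2} for exactly that in the remark following the corollary. You have merely unpacked the order-preserving/order-reflecting bookkeeping that the paper leaves implicit.
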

\begin{proof}
If $\frak L$ is a left ideal of $\Leo$ containing $\Leo\l$, then by Corollary~\ref{acc} there exists $\delta\in \Lambda$ such that $\frak L=\Leo \delta$, so that $\delta$ is a right divisor of $\l$.
\end{proof}
Note that the distributivity of the lattice ${\bf L}(\Lambda\l, \Lambda)$ is a deep result of Cohn \cite[Theorem 4.3.3]{c2}.

\smallskip
The next corollary shows that irreducible polynomials without constant generate $\Leo$.

\begin{corollary}\label{pritri} If $\p \in \Lambda$ is irreducible  without constant, then $\Leo\p=\Leo$. In particular, $\Leo\l=\Leo$ if $\l$ is a product of irreducible polynomials without constant.
\end{corollary}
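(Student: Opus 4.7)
My plan is to reduce $\Leo\p$ to $\Leo\mu$ for a single comonic $\mu$ using the tools developed so far, then exploit irreducibility of $\p$ to force $\mu$ to be a unit.

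First, since $\p$ has zero constant term, I would apply Lemma \ref{pre2} to write $\Leo\p = \sum_{j=1}^l \Leo\gamma_j$ where each $\gamma_j$ is a cofactor of $\p$ at a minimal-degree monomial of $\p$; these have nonzero constant term, and after scaling may be taken to be comonic, with $|\gamma_j|<|\p|$. Next, I would apply Theorem \ref{main1} iteratively to this finite sum: $\Leo\gamma_1+\Leo\gamma_2=\Leo\mu_2$ for some comonic $\mu_2$, then $\Leo\mu_2+\Leo\gamma_3=\Leo\mu_3$, and so on, eventually obtaining a comonic $\mu\in\Lambda$ with $\Leo\p = \Leo\mu$.

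Now $\p\in\Leo\mu\cap\Lambda$, so Lemma \ref{pre1} (applied to the comonic $\mu$) gives $\p=\alpha\mu$ for some $\alpha\in\Lambda$. Since $\p$ is irreducible, one of $\alpha,\mu$ must be a unit of $\Lambda$, and units are nonzero scalars in $K$. If $\alpha\in K\setminus\{0\}$, then $\p$ inherits a nonzero constant term from $\mu$ (which is comonic), contradicting the hypothesis on $\p$. Hence $\mu\in K\setminus\{0\}$; being comonic forces $\mu=1$, so $\Leo\p=\Leo\mu=\Leo$.

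For the final claim, write $\l=\p_1\cdots\p_k$ with each $\p_i$ irreducible and without constant. By what was just proved, each $\Leo\p_i=\Leo$, so each $\p_i$ admits a left inverse $r_i\in\Leo$ with $r_i\p_i=1$. Multiplying on the left by $r_1$ yields $r_1\p_1\p_2\cdots\p_k=\p_2\cdots\p_k$, so $\Leo\l=\Leo\p_2\cdots\p_k$; iterating this stripping procedure collapses the product successively to $\Leo\p_k=\Leo$. The only subtle point is the iterative use of Theorem \ref{main1} in the first step, but that is essentially a straightforward induction on the number of summands since each application preserves the property of being a principal left ideal generated by a comonic element.
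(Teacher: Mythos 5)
Your proof is correct and follows essentially the same route as the paper: reduce $\Leo\p$ to $\Leo\mu$ for a comonic $\mu$, then apply Lemma \ref{pre1} together with the irreducibility of $\p$ to force $\mu=1$. The only difference is that where you re-derive the needed equality $\Leo\p=\Leo\mu$ from scratch (via Lemma \ref{pre2} and repeated applications of Theorem \ref{main1}), the paper simply invokes Corollary \ref{acc}, which already packages exactly that argument; and you spell out the ``in particular'' part with an explicit left-inverse stripping argument, while the paper leaves it implicit.
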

\begin{proof}
Let  $\p \in I$ be  irreducible. By Corollary \ref{acc}, $\Leo\pi=\Leo\lambda$  for  a comonic  polynomial $\lambda\notin I $. By Lemma~\ref{pre1} we get that $\p$ is a multiple of $\lambda$. Thus the irreducibility of $\pi$   implies $\lambda=1$.
\end{proof}

As a  next  step to the proof Theorem \ref{goal1}, we show the equivalence of the similarity over $\Leo$ and $\Lambda$.

\begin{corollary}\label{unique1} Let $\l\in \Lambda$ be comonic and $\l=\d_1\cdots \d_m$ be a  factorization of $\l$ into irreducible factors. Then:
\begin{enumerate}
\item The $\Leo$-module $M_\l=\Leo/\Leo\l$ has a composition series  with composition factors the simple modules $\Leo/\Leo \d_i$, for $ i=1, \cdots, m$. 
\item The composition factors of $\Lambda$-module $V_\l$ contains  the simple modules $V_{\d_i}$, for $i=1, \cdots, m$. 
\item Given a monic polynomial $\g\in \Lambda$, $\g$ and $\l$  are similar   over 
$\Lambda$ 
if and only if they are similar over $\Leo$ .
\end{enumerate}
\end{corollary}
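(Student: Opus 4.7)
The plan is: (1) by induction plus projective-freeness; (3) by the Leavitt relation $\sum x_ix_i^*=1$; and (2) by flat base change to $\Leo$. Throughout set $\l_i=\d_i\d_{i+1}\cdots\d_m$, so $\l_1=\l$, $\l_m=\d_m$, and $\l_i=\d_i\l_{i+1}$.

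I would prove (1) by induction on $m$, with base case $m=1$ being Corollary~\ref{step1}(1). For the step, consider the short exact sequence $0\to\Leo\l_2/\Leo\l\to\Leo/\Leo\l\to\Leo/\Leo\l_2\to 0$ and the map $\f\colon\Leo/\Leo\d_1\to\Leo\l_2/\Leo\l$, $r+\Leo\d_1\mapsto r\l_2+\Leo\l$. Well-definedness and surjectivity are immediate; the delicate point is injectivity, which reduces to right multiplication by $\l_2$ being injective on $\Leo$. This is precisely where the Leavitt framework pays off: the nonzero principal left ideal $\Leo\l_2$ is projective by hereditariness (Corollary~\ref{flatpro}(2)) and cyclic, hence free of rank one in the projective-free ring $\Leo$, so $r\mapsto r\l_2$ is already an isomorphism $\Leo\cong\Leo\l_2$, in particular injective. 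Thus $\Leo\l_2/\Leo\l\cong\Leo/\Leo\d_1$ is simple by Corollary~\ref{step1}(1), and splicing with the inductive hypothesis for $\l_2$ finishes the composition series.

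For (3), Lemma~\ref{pre3} identifies $\Leo/\Leo\l\cong\Lambda/\Lambda\l$ and $\Leo/\Leo\g\cong\Lambda/\Lambda\g$ as $\Lambda$-modules. I would then argue that any $\Lambda$-linear map $f\colon M\to N$ between two $\Leo$-modules is automatically $\Leo$-linear: using $\sum_j x_jx_j^*=1$, every $m\in M$ decomposes as $m=\sum_j x_j(x_j^*m)$, whence $f(m)=\sum_j x_j f(x_j^*m)$; since the relations $x_i^*x_j=\delta_{ij}$ make the decomposition in $N$ unique, we obtain $x_j^*f(m)=f(x_j^*m)$ for every $j$. Hence $\l$ and $\g$ are similar over $\Lambda$ iff $\Leo/\Leo\l\cong\Leo/\Leo\g$ as $\Lambda$-modules iff as $\Leo$-modules, i.e., iff they are similar over $\Leo$.

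For (2), intersect the $\Leo$-chain $\Leo\l\subseteq\Leo\l_2\subseteq\cdots\subseteq\Leo\l_m\subseteq\Leo$ with $\Lambda^*$ to obtain $L_\l\subseteq L_{\l_2}\subseteq\cdots\subseteq L_{\l_m}\subseteq\Lambda^*$, and transport this through the $\Lambda$-module isomorphism $V_\l\cong\Lambda^*/L_\l$ of Corollary~\ref{iso1} to a filtration of $V_\l$. By Corollary~\ref{flatpro}(1) we have $\Leo L_{\l_j}=\Leo\l_j$, so flatness of $\Leo$ over $\Lambda^*$ (Theorem~\ref{quo1}) yields
\[
\Leo\otimes_{\Lambda^*}\bigl(L_{\l_{i+1}}/L_{\l_i}\bigr)\cong \Leo\l_{i+1}/\Leo\l_i\cong \Leo/\Leo\d_i,
\]
which is simple by (1). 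The fully faithful embedding of the Leavitt category into $\frak{FPL}$ (Corollary~\ref{arafp}) then forces the unique non-trivial simple composition factor of $L_{\l_{i+1}}/L_{\l_i}$ to be $V_{\d_i}$, since $\Leo\otimes_{\Lambda^*}V_{\d_i}\cong\Leo/\Leo\d_i$. Together with the top quotient $\Lambda^*/L_{\l_m}=V_{\d_m}$, each $V_{\d_i}$ occurs among the composition factors of $V_\l$. The main obstacle is precisely the injectivity step in (1) — the non-domain character of $\Leo$ makes this look worrisome but is defused by projective-freeness — and the secondary subtlety in (2) that $V_\l$ might a priori carry extra Leavitt-torsion composition factors not matched by any $V_{\d_i}$, which is consistent with the statement's use of ``contains''.
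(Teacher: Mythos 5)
Your parts (2) and (3) are sound, and your part (3) is actually slicker than the paper's proof: the paper invokes Cohn's Proposition~3.3.4 to produce a coprime relation $\g\a=\b\l$ and then runs a length-comparison argument, whereas your observation that every $\Lambda$-linear map between left $\Leo$-modules is automatically $\Leo$-linear (via $f(m)=\sum_j x_jf(x_j^*m)$ and then hitting with $x_i^*$) settles the equivalence in one stroke. Part (2) follows the paper's route through the chain $L_{\l_i}=\Lambda^*\cap\Leo\l_i$ and the localization; that matches.

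The problem is the injectivity step in (1). You reduce to showing that right multiplication by $\l_2$ is injective on $\Leo$, and justify it by: ``$\Leo\l_2$ is projective and cyclic, hence free of rank one in the projective-free ring $\Leo$, so $r\mapsto r\l_2$ is already an isomorphism.'' This does not follow. The paper's notion of projective-free (projectives are free) carries no uniqueness of rank, and indeed $\Leo=L(1,n)$ fails IBN: $\Leo^1\cong\Leo^n$, so a free cyclic module need not have ``rank one'' in any usable sense. More fatally, even knowing $\Leo\l_2\cong\Leo$ as modules does not make the specific surjection $r\mapsto r\l_2$ injective; that would require stable finiteness (cancellation of a free summand), and $\Leo$ is purely infinite simple, hence as far from stably finite as possible — one has $\Leo\oplus\Leo^{n-1}\cong\Leo$ with $\Leo^{n-1}\neq 0$. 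So the hereditary/projective-free combination genuinely does not defuse the non-domain worry here.

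The conclusion is nevertheless true, and there are two clean ways to get it. The paper avoids injectivity entirely: $T_1:=\Leo\l_2/\Leo\l$ is a nonzero cyclic $\Leo$-module (nonzero by Lemma~\ref{pre1} applied to the comonic $\l$) generated by $\bar\l_2$, and $\d_1\bar\l_2=\bar\l=0$, so $T_1$ is a nonzero quotient of the simple module $\Leo/\Leo\d_1$ (simple by Corollary~\ref{step1}), hence $T_1\cong\Leo/\Leo\d_1$. Alternatively, if you insist on injectivity of right multiplication by a comonic $\m=1-\delta$ ($\delta$ without constant), argue as in Lemma~\ref{pre1}: $r\m=0$ gives $r=r\delta=r\delta^\ell$ for all $\ell$, and for $\ell$ large $r\delta^\ell\in\Lambda$, so $r\in\Lambda$; since $\Lambda$ is a domain embedded in $\Leo$ and $\m\neq0$, this forces $r=0$. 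Either repair closes the gap; what you cannot do is lean on projective-freeness alone.
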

\begin{remark}\label{nre2} Together with Corollary \ref{flatpro}(3), Claim (3) of Corollary \ref{unique1} says that $V_\l$ has  length $\geq m$ and all the other possible composition factors are isomorphic to the trivial module $K$. We will show that there are no more composition factors and so $|V_\l|=m$ in Corollary \ref{sim0}.
\end{remark}
\begin{proof} $(1)$ Since $\l$ is comonic,   all  the $\d_i$'s  have constant term and we can assume they are comonic.
By Lemma~\ref{pre1},  $\Leo$ admits a descending chain $\Leo \supset \Leo\d_l \supset \Leo \d_{l-1}\d_l \supset \cdots \supset \Leo\d_2\cdots \d_l \supset \Leo \l$ of pairwise different left ideals. For $i=1, \cdots, m$ let $\g_i=\d_i\cdots \d_l$, so $\g_1=\l, \, \g_l=\d_l$ and $\g_{l+1}=1$. For any $i=1, \cdots, m$ let $T_i=\Leo \g_{i+1}/\Leo\g_i$.  Then  $T_i$ is a  non-zero left $\Leo$-module and if we denote by $\a_{i+1}$ the image of $\g_{i+1}$ in $T_i$, then $T_i=\Leo a_{i+1}$ and $\d_{i}a_{i+1}=0$ holds. So  we get $T_i\cong \Leo/\Leo{\d_i}$, and   $\Leo/\Leo\delta_i$ is simple since  $\d_i$  is irreducible. 

$(2)$ 
Put $L_i=\Lambda^* \cap \Leo \g_i \,(i=1, \cdots, l)$.
By Corollary~\ref{iso1} $L_l$ is a maximal left ideal of $\Lambda^*$ and $V_{\d_l}\cong \Lambda^*/L_l$ is a simple composition factor of $V_\l$. From Claim $(1)$ and by Corollary~\ref{flatpro} together with Corollary~\ref{iso1} again, for each $i=1, \cdots, l-1$ the $\Lambda^*$-module $L_{i+1}/L_i$ has one composition factor isomorphic to $V_i$. Furthermore by the comments after Corollary~\ref{flatpro} all possible other composition factors of all $L_{i+1}/L_i \,(i=1, \cdots, l-1)$ are the trivial $\Lambda^*$-module $K$. Consequently, Claim $(2)$ is verified.

%The statement follows from the previous one,  by applying Corollary~\ref{flatpro} and Corollary~\ref{iso1}.

$(3)$ By Proposition 3.3.4 \cite{c2} the similarity of $\g$ and $\l$ over $\Lambda$ implies an equality
$\g \a=\b \l$ for some $\a, \b \in \Lambda$ such that $\g, \b$ have no common left factor while $\a, \l$ have no common right factor. By $\Lambda\alpha \cap \Lambda\lambda \neq 0$ one sees that $\Lambda\alpha+\Lambda\lambda=\Lambda\m$ for some $\m \in \Lambda$ whence $\m$ must be a unit because $\alpha$ and $\lambda$ have no common right factor. 
Consequently, $\Lambda\a+\Lambda\l=\Lambda$ and hence the endomorphism $1\mapsto \a$ of $_{\Leo}\Leo$ induces a surjective homomorphism $f:{M_\g}\rightarrow {M_\l}$ by the equality $(\Leo \g)\a=\Leo (\g \a)=\Leo (\b \l)\subseteq \Leo\l$. Since both $M_\g$ and $M_\l$ have finite length one obtains $\lg({M_\l})\leq \lg( {M_\g})$ whence $\lg({M_\l})=\lg( {M_\g})$ by symmetry. This shows that $f$ is an isomorphism and so $\g$ and $\l$ are similar over $\Leo$. On the other hand,  an isomorphism between
$\Leo$-modules $M_{\g}$ and $M_{\l}$ is clearly a $\Lambda$-isomorphism between $\Lambda/\Lambda\g$ and $\Lambda/\Lambda\l$ in view of Lemma \ref{pre3}, whence the similarity over $\Leo$ implies the one over $\Lambda$. This completes  the proof.
\end{proof}
 
 \begin{remark}\label{n=1}% Notice that the last point of Corollary~\ref{unique1} holds in the assumption we fixed from the beginning, i.e. $n>1$.
It is important to see clearly the role of $\l$ and $V_\l$ even in the case $n=1$. As we have already seen in Remark \ref{nre1}, for a comonic polynomial $\l$ and  its  reciprocal  polynomial  $\g$, the module $V_\l\cong K[x]/K[x]\g$ is not in general isomorphic to $K[x]/K[x]\l$.  Hence passing from $\Lambda=K[x]$ to $\Leo=K[x, x^{-1}]$ we have the parallel appearance of both $\l$ and $\g$ but they are still not similar. To make them similar, or more generally to make all irreducible polynomials defining the same field extension of $K$ similar, which is reasonable, one needs a more general notion of similarity of linear transformations. By passing to the corresponding companion matrix, it is proved in \cite[Proposition 1.6 (2)]{aa} that they, i.e., the associated companion matrices, are indeed similar matrices.
%with so that $\Lambda=K[x]$ and $\Leo=K[x, x^{-1}]$, we can have polynomials which are similar in $\Leo$ but not in $\Lambda$. Consider for instance a comonic polynomial $\l$ such that $\l$ is not a scalar multiple of its reciprocal polynomial $\g=x^l\l(x^{-1})$. Then $\l$ and $\g$ are not similar in $K[x]$, indeed  $K[x]/K[x]\l$ and $K[x]/K[x]\g$ are not isomorphic as $K[x]$-modules, even if they are isomorphic as  $K$-algebras.  Nevertheless $K[x, x^{-1}]/K[x, x^{-1}]\l(x)=K[x, x^{-1}]/K[x, x^{-1}]\g(x^{-1})\cong K[x, x^{-1}]/K[x, x^{-1}]\g(x)$, so $\l$ and $\g$ are similar in $K[x, x^{-1}]$.
%Moreover, since for any $\l\in K[x]$ we get the $K$-algebra isomorphism $K[x]/K[x]\l\cong K[x, x^{-1}]/K[x, x^{-1}]\l$, we conclude that two irreducible polynomials in $\Lambda$ define the same algebraic field extension if and only if they are similar in $K[x, x^{-1}]$.
\end{remark}

By Corollaries \ref{step1} and \ref{unique1},  the proof of Theorem \ref{goal1} is complete by the verification of
\begin{corollary}\label{sim0} Two comonic polynomials $\g$ and $ \l \in \Lambda$ are similar if and only if the $\Lambda$-modules $V_\g$ and $V_\l$ are isomorphic.
\end{corollary}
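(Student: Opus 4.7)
The forward direction is quick: $V_\g \cong V_\l$ as $\Lambda$-modules corresponds, via the canonical isomorphism $\Lambda \cong \Lambda^*$ and Corollary~\ref{iso1}, to $\Lambda^*/L_\g \cong \Lambda^*/L_\l$ as $\Lambda^*$-modules. Tensoring with $\Leo$ (flat over $\Lambda^*$ by Theorem~\ref{quo1}) and using $\Leo L_\l = \Leo\l$ from Corollary~\ref{flatpro}(1) gives $M_\g \cong M_\l$ as $\Leo$-modules, whence similarity by Corollary~\ref{unique1}(3).

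For the converse I would aim to show that for every comonic $\l$ of positive degree, $V_\l$ is a Leavitt $\Lambda^*$-module. Once this is established, the full embedding of Corollary~\ref{arafp} translates the isomorphism $M_\g \cong M_\l$ (provided by Corollary~\ref{unique1}(3)) back to an isomorphism $V_\g \cong V_\l$; it also yields $|V_\l| = m$ as promised in Remark~\ref{nre2}, since a Leavitt module and its $\Leo$-tensor have the same length.

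My plan for proving $V_\l$ is Leavitt is induction on the number $m$ of atomic factors of $\l$. The base $m = 1$ is handled by Corollary~\ref{step1} together with the observation that if $V_\l$ were the trivial simple module, then $\l_{x_i} = 0$ for every $i$, contradicting $|\l| \geq 1$. For the inductive step, write $\l = \p\l'$ with $\p$ irreducible; because $\Lambda$ is a domain, a short argument using Lemma~\ref{sub1} shows that $\l'$ is a non-zero-divisor in $\Leo$ (if $\l' t = 0$, pick $b \in \bB$ with $tb \in \Lambda$; then $\l'(tb) = 0$ in $\Lambda$ forces $tb = 0$, hence $t = 0$ after expanding $t$ via Lemma~\ref{sub1}). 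This yields a short exact sequence $0 \to M_\p \to M_\l \to M_{\l'} \to 0$ of $\Leo$-modules, via $r + \Leo\p \mapsto r\l' + \Leo\l$, and restricting the surjection to the canonical cyclic $\Lambda^*$-generators produces a surjection $V_\l \twoheadrightarrow V_{\l'}$ whose kernel I would identify with $V_\p$, realising $V_\l$ as an extension of Leavitt modules and hence Leavitt itself.

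The main obstacle is the identification of the kernel $V_\l \cap M_\p$ with $V_\p$. The inclusion $V_\p \subseteq V_\l \cap M_\p$ amounts to $\l' + \Leo\l \in V_\l$ and can be extracted from a Leibniz-type rule $(pq)_{x_i} = p_{x_i}q + p(0)q_{x_i}$ for cofactors, iterated along $\l = \p\l'$, which expresses each cofactor of $\l$ as a combination of terms of the form $\p_b\l'$ together with cofactors of $\l'$. The reverse inclusion would combine the essentialness of $V_\p$ in $M_\p$ from Corollary~\ref{ess1} with a dimension count forcing $\dim V_\l = \dim V_\p + \dim V_{\l'}$, ruling out extra finite-dimensional $\Lambda^*$-submodules of $M_\p$ realised inside $V_\l$; that is the delicate point where the argument will need the most care.
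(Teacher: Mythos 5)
Your forward direction matches the paper's. For the converse, the overall plan — show by induction on the number of atomic factors that $V_\l$ has no trivial simple subfactors (so is a Leavitt module) and then invoke full-faithfulness from Corollary~\ref{arafp} — coincides in substance with the paper's, which states the same goal as "the length of $N_\l=\Lambda^*/L_\l$ equals the number of atoms." The genuine difference is the inductive step. The paper shows directly that $L_{\overline{\l}}/L_\l$ is simple by contradiction, using the annihilator of torsion subquotients and powers of $I^*$. You aim instead for a two-sided dimension estimate; this is a valid route and moreover yields the explicit formula $\dim V_\l=\sum_i\dim V_{\p_i}$, but your sketch attaches the two halves to the wrong tools. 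The lower bound $\dim(L_{\l'}/L_\l)\geq\dim V_\p$ comes cheaply from essentialness, not from the Leibniz rule: $N_\l$ is torsion-free because $L_\l=\Lambda^*\cap\Leo L_\l$, so $L_{\l'}/L_\l$ embeds in $\Leo\l'/\Leo\l\cong M_\p$ and therefore contains the simple essential socle $V_\p$ of Corollary~\ref{ess1}. The Leibniz rule $(pq)_{x_i}=p_{x_i}q+p(0)\,q_{x_i}$ is what supplies the upper bound: iterating it along $\l=\p\l'$ shows every iterated cofactor $\l_b$ lies in $V_\p\l'+V_{\l'}$, a sum that is direct because nonzero elements of $V_\p\l'$ have degree $\geq|\l'|$ while those of $V_{\l'}$ have degree $<|\l'|$; hence $\dim V_\l\leq\dim V_\p+\dim V_{\l'}$, i.e.\ $\dim(L_{\l'}/L_\l)\leq\dim V_\p$, which closes the sandwich. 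Finally, a sign-of-the-times slip: your regularity check establishes $\l' t=0\Rightarrow t=0$, whereas injectivity of $r+\Leo\p\mapsto r\l'+\Leo\l$ would need $u\l'=0\Rightarrow u=0$; fortunately neither is required, since by Corollary~\ref{unique1} both $\Leo/\Leo\p$ and $\Leo\l'/\Leo\l$ have $\Leo$-length one, so the evident surjection between them is automatically an isomorphism.
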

\begin{proof} As usual, let  $L_\g=\Lambda^*\cap \Leo\g$ and  $ L_\l=\Lambda^*\cap \Leo\l$. By Corollary~\ref{iso1}, we show that $\g, \l$ are similar if and only if $N_\g=\Lambda^*/L_\g, N_\l=\Lambda^*/L_\l$ are isomorphic $\Lambda^*$-modules. Since the sufficiency is immediate, we need only to show the necessity, that is, $N_\g$ and $ N_\l$ are isomorphic if $M_\g$ and $ M_\l$ are isomorphic $\Leo$-modules. In view of Corollary \ref{arafp} it is enough to see that the length of $N_\l$ is $l$ if $\l=\p_1\cdots \p_l$ is a product of $l$ irreducible polynomials. We use induction on $l$. The claim for $l=1$ is exactly Corollary \ref{step1}. Assume the claim for $l-1\geq 1$ and put $\l=\p_1\cdots \p_l=\p_1{\overline \l}$. Denoting by $L_{\overline \l}=\Lambda^*\cap \Leo{\overline \l}$,  by  induction  $N_{\overline \l}=\Lambda^*/L_{\overline \l}$  has  length $l-1$ . To complete the induction it is enough to see that $L_{\overline \l}/L_\l$ is a  simple $\Lambda^*$-module. First notice that   $L_{\overline \l}/L_\l$ is not the zero module, since $L_{\lambda}\neq L{\overline{\lambda}}$ by Corollary~\ref{flatpro}. Consider the right multiplication by $\overline{\l}$ in $\Leo$. It induces an isomorphism between $\Leo/\Leo\pi_1$ and $\Leo\overline{\l}/\Leo\l$. Thanks to Corollary \ref{arafp} and Corollary \ref{step1}, we get an isomorphism between the simple module $\Lambda^*/L_{\p_1}$ and $L_{\overline{\l}}/L_\l$, from which we deduce that there exists a $y\in L_{\overline{\l}}$ such that its annihilator is $L_{\p_1}$. Assume now $L_{\overline{\l}}/L_\l$ is not simple and let us  conclude obtaining  a contradiction. Indeed, for any ideal $L$ properly between $L_{\l}$ and $L_{\overline{\l}}$, $\Leo L=\Leo\overline{\l}$ by Corollary \ref{flatpro}. Since $L_{\overline{\l}}/L_\l$ is finite dimensional, this means that its annihilator ideal contains a power $(I^*)^m$ for a suitable $m>0$. Consequently, $(I^*)^my\in L_\l$, so that $(I^*)^m\leq L_{\p_1}$. Then, arguing as in Corollary~\ref{step1}(2), in the local ring $\Lambda^*/(I^*)^m$ the left ideal $L_{\p_1}/(I^*)^m$ contains a unit and hence $\Lambda^*=L_{\pi_1}$. This is contradiction since $\pi_1$ is comonic and irreducible, so we conclude the proof.
\end{proof}

We conclude this section emphasizing how our approach of considering polynomials in $\Lambda$ as elements of $\Leo$ and studying their associated modules could be an effective tool for the study of the arithmetic of polynomials in noncommutating
variables. First of all, given  any finitely many arbitrary polynomials $\l_1, \cdots, \l_l\in K\langle x_1, \cdots x_n\rangle$, we can define  their \emph{generalized greatest common divisor}, shortly \emph{greatest common divisor}, as the  polynomial with constant $\d \in K\langle x_1, \cdots, x_n\rangle$  which is a generator of the left ideal $\sum\limits_{i=1}^l \Leo\l_i$. As an immediate consequence of Corollary \ref{acc}, this greatest common divisor $\d$ exists and is unique up to a scalar, even in the case that the ordinary least common multiple of the $\l_i$'s  is $0$. It is important to note the following: while the greatest common divisor of a polynomial with constant is itself up to a scalar, they are different in the case of a polynomial without constant, since  the greatest common divisor has constant  by definition. {For example, the equality $\Leo(x_1+x^3_1+x_1x^2_2+x_2+x_2x^2_1+x^3_2)=\Leo(1+x^2_1+x^2_2)$ shows by the definition that the greatest common divisor of the polynomial $\l=x_1+x^3_1+x_1x^2_2+x_2+x_2x^2_1+x^3_2$ is the comonic polynomial $1+x^2_1+x^2_2$.} Theorem \ref{main1} offers also an algorithm to find $\l$. Because of its importance, we rephrase this fact together with Theorem \ref{main1} as

\begin{theorem}
Given $\l$ and $\g$ monic polynomials in $\Lambda$, then there exists a unique monic polynomial $\d$ in $\Lambda$ such that $\delta$ is a right divisor of $\l$ and $\g$, and any other common right divisor is a right divisor of $\d$.
\end{theorem}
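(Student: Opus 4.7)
The plan is to apply Theorem~\ref{main1} directly: it produces a comonic polynomial $\d \in \Lambda$ with $\Leo\l + \Leo\g = \Leo\d$, and I will show that this $\d$ is exactly the greatest common right divisor in the sense of the statement. (I interpret ``monic'' here as \emph{comonic}, consistently with the paper's convention.)

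First I would verify that $\d$ is a common right divisor. Since $\Leo\l \subseteq \Leo\d$, the polynomial $\l$ lies in $\Leo\d \cap \Lambda$, which equals $\Lambda\d$ by Lemma~\ref{pre1}. Hence $\l \in \Lambda\d$, i.e., $\d$ is a right divisor of $\l$. The same reasoning applied to $\g$ gives $\g \in \Lambda\d$, so $\d$ is a common right divisor.

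Next, I would show the universal property. Let $\mu$ be any common right divisor of $\l$ and $\g$; after rescaling (since $\l, \g$ are comonic, the constant term of $\mu$ is nonzero) we may take $\mu$ comonic. Then $\l, \g \in \Lambda\mu \subseteq \Leo\mu$, hence $\Leo\l + \Leo\g \subseteq \Leo\mu$, i.e., $\Leo\d \subseteq \Leo\mu$. A second application of Lemma~\ref{pre1} yields $\d \in \Leo\mu \cap \Lambda = \Lambda\mu$, so $\mu$ is a right divisor of $\d$, as required.

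For uniqueness, suppose $\d'$ is another comonic polynomial satisfying both properties. By the universal property of $\d$ applied to $\d'$, we get $\d' \mid \d$; symmetrically, $\d \mid \d'$. Hence $\Leo\d = \Leo\d'$ (each generator is a left multiple of the other in $\Leo$), and by Corollary~\ref{proper} the polynomials $\d$ and $\d'$ differ by a scalar. Since both are comonic the scalar must be $1$, giving $\d = \d'$. I do not foresee a substantive obstacle: essentially everything reduces to Theorem~\ref{main1} together with the two recurring tools $\Leo\l \cap \Lambda = \Lambda\l$ and the scalar-rigidity of comonic generators (Corollary~\ref{proper}); the only minor point to handle carefully is the rescaling of an arbitrary common right divisor to be comonic, which is immediate from the multiplicativity of constant terms.
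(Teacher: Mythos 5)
Your proof is correct and takes essentially the same route the paper indicates: the paper presents this theorem as an immediate consequence of Theorem~\ref{main1} (and Corollary~\ref{acc}), together with $\Leo\l\cap\Lambda=\Lambda\l$ from Lemma~\ref{pre1} and the scalar-rigidity of comonic generators from Corollary~\ref{proper}, which are exactly the ingredients you assemble. Your reading of ``monic'' as ``comonic'' and your rescaling of an arbitrary common right divisor are both consistent with the paper's conventions.
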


{For example, let $\l=1+x^2_1+x^2_2$. Then $V_\l=K+Kx_1+Kx_2$ has dimension 3. It is easy to see that $V_\l$ is a simple $\Lambda$-module with $x_1\star_\l 1=-x_1$ and $x_2\star_\l 1=-x_2$. Hence $\l$ is irreducible. Let $\g=1+x_1+x^2_1$. Then $x^2_1\star_\l \g=(x^2_1\star_\l 1)+(x^2_1\star_\l x_1)+(x^2_1\star_\l x^2_1)=-1-x_1+1=-x_1\neq 0$ implies that the greates common divisor of $\l$ and $\g$ is 1. If $\a=1+x^2_1+x_2+x_2x^2_1+x^2_2+x^3_2$, then one can check routinely that $b\star_\l \a=0$ for all monomials $b$ in $x_1, x_2$ of degree 3. Hence $\l$ is the greatest common divisor of $\a, \l$. Actually, $\a=(1+x_2)\l$ holds.}

Moreover one can solve the so-called  "membership problem" in $\Lambda$ by a relatively simple and quick algorithm {as an easy consequence of Theorem \ref{test}. Namely, let $\g=1+\sum x_i\g_i=1+\underline{\g}$ be a comonic polynomial  in $\Lambda$ and $\l=k_\l+\sum x_i\l_i$ an arbitrary polynomial of degree $m$. Let $R_\l=\{\d_b=b\star_\g \l\neq 0\ \,\, | b\in \bB, \, |b|=|\l|\}\subseteq V_\g$. According to Theorem \ref{test} $\g$ is a right factor of $\l$ if and only if this set $R_\l$ is empty. Hence $R_\l$ behaves like a set of "generalized remainders of $\l$ at division on the left by $\g$. If $R_\l$ is not empty, then 
$$\Leo\g+\Leo\l=\Leo\g+\sum\limits_{\d_b \in R_\l}\d_b$$
and for $\d_\l=\sum\limits_{\delta_b\in R_\l} b\d_ b$ one can find algorithmically $\r_\l \in \Lambda$ such that $\l=\r_\l\g+\d_\l$.}

{Hence we have an expression $\l=\r_\l\g+\d_\l$ as a final result of dividing $\l$ by $\g$. We summarize these results in the following theorem.}

\begin{theorem}\label{divi} Let $\g\in \Lambda$ be a comonic   polynomial   and $\l \in \Lambda$ of degree $m$. Then $\l \in \Lambda\g$ if and only if $b\star_\g \l=0$ holds for every monomial $b\in \bB$ of degree $m$. Moreover, if $R_\l=\{\d_b=b\star_\g \l\neq 0 \, |\, |b|=m\}$, then there is a unique  polynomial $\r_\l\in \Lambda$ satisfying  $\l=\r_\l\g+\d_\l$ where $\d_\l=\sum\limits_{\delta_b\in R_\l} b\d_ b$. {By symmetry, one has an algorithm to determine the right quotient $_\l\r$ and the set of remainders ${_\l}R=\{_b\d=\l\star_\g b\neq 0 \, |\, |b|=m\}$ together with the expression $\l=\g({_\l\r})+_\l\d$ where $_\l\d=\sum\limits_{_b\delta\in {_\l}R}{_b\d}b$.}
\end{theorem}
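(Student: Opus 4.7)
The first biconditional is a direct restatement of Theorem \ref{test}: saying that $\g$ is a right factor of $\l$ means precisely $\l \in \Lambda\g$, and the criterion $b \star_\g \l = 0$ for all $b$ with $|b| = m$ is what that theorem provides when the comonic divisor is $\g$ and the dividend has degree $m$. So this equivalence requires no fresh argument.

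For the division identity, the plan is to work inside $\Leo$ and apply the Leavitt relation
\[
\sum_{b\in\bB,\; |b|=m} b\, b^{*} = 1
\]
from \eqref{eq10}. Multiplying on the right by $\l$ yields $\l = \sum_{|b|=m} b\,(b^{*}\l)$ in $\Leo$. The key step is to rewrite each $b^{*}\l$ modulo $\Leo\g$ as an element of $V_\g$: by Lemma \ref{pre3} combined with Remark \ref{star}, the action of the dual variable $x_i^{*}$ on $\Lambda/\Lambda\g$ coincides with the $\star_\g$-action of $x_i$, so iterating this identification for the word $b^{*}$ shows that $b^{*}\l$ is congruent modulo $\Leo\g$ to the corresponding cofactor in $V_\g$ (up to the reindexing $b \leftrightarrow {^\tau}b$ caused by the order reversal between $b^{*}$ and $b$). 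After this substitution the sum runs over exactly the nonzero contributions $R_\l$, and one obtains $\l \equiv \sum_{b} b\, \d_b = \d_\l \pmod{\Leo\g}$.

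Since $\l - \d_\l$ lies both in $\Leo\g$ and in $\Lambda$, Lemma \ref{pre1} forces $\l - \d_\l \in \Leo\g\cap\Lambda = \Lambda\g$, yielding some $\r_\l \in \Lambda$ with $\l = \r_\l \g + \d_\l$. Uniqueness of $\r_\l$ is immediate since $\Lambda$ is a domain and $\g \neq 0$. The algorithmic content is then clear: the $\d_b$, and hence $\r_\l$, are read off from the explicit $\star_\g$-rule in finitely many steps.

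The symmetric right-hand statement is obtained by transporting the argument through the canonical antiisomorphism $\Lambda \cong \Lambda^{*}$, or equivalently by using the right $\Lambda$-module structure $_{\l}\Lambda$ introduced in Remark \ref{leftright}, which swaps left and right cofactors and converts ``right factor'' into ``left factor.'' The main technical obstacle I anticipate is the careful tracking of the order reversal between multiplication by $b^{*}$ in $\Leo$ and the iterated cofactor computation encoded by $b\star_\g(-)$: once the resulting bookkeeping of ${^\tau}b$ in the congruence above is pinned down, everything reduces to Lemmas \ref{pre1} and \ref{pre3} together with the Leavitt relation \eqref{eq10}.
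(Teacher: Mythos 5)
Your overall strategy matches the paper's: the first biconditional is indeed a direct instance of Theorem~\ref{test}, the existence of $\r_\l$ comes from writing $\l=\sum_{|b|=m}b(b^*\l)$ via \eqref{eq10} and reducing modulo $\Leo\g$ using Remark~\ref{star} and Lemma~\ref{pre1}, uniqueness follows because $\Lambda$ is a domain, and the right-sided version transports through the canonical anti-isomorphism. So the route is the one the paper sketches in the paragraph preceding the theorem.

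The ``bookkeeping of ${^\tau}b$'' that you flag and then set aside is not a harmless reindexing — it actually changes the formula, and you should pin it down rather than assert that the sum comes out as $\sum_{b}b\,\d_b$. Since $b^*=x^*_{i_l}\cdots x^*_{i_1}$ for $b=x_{i_1}\cdots x_{i_l}$ and $x^*_j$ acts on $\Lambda/\Lambda\g$ as $x_j\star_\g(-)$, one has $b^*\l\equiv ({^\tau}b)\star_\g\l \pmod{\Leo\g}$, hence
\[
\l=\sum_{|b|=m}b(b^*\l)\equiv\sum_{|b|=m}b\bigl(({^\tau}b)\star_\g\l\bigr)=\sum_{|c|=m}{^\tau}c\,(c\star_\g\l)\pmod{\Leo\g},
\]
so the remainder is $\d_\l=\sum_{\d_c\in R_\l}{^\tau}c\,\d_c$, with the transpose, and not $\sum_{\d_c\in R_\l}c\,\d_c$ as you (and the statement you are proving) write. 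A quick test: take $\g=1-x_1$ and $\l=x_1^2+x_2$. Then $\d_{x_1^2}=1$, $\d_{x_1x_2}=1$ and all other $\d_b=0$. The untransposed formula gives $\d_\l=x_1^2+x_1x_2$ and $\l-\d_\l=(1-x_1)x_2=\g x_2\notin\Lambda\g$, whereas the transposed formula gives $\d_\l=x_1^2+x_2x_1$ and $\l-\d_\l=x_2(1-x_1)=x_2\g\in\Lambda\g$, with $\r_\l=x_2$. So the ``technical obstacle'' you postpone is precisely where the proof has to be careful; resolving it shows the displayed expression for $\d_\l$ must carry the transpose (equivalently, one should index by ${^\tau}b$, or redefine $\d_b$ as ${^\tau}b\star_\g\l$). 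With that correction the rest of your argument — Lemma~\ref{pre1} to land in $\Lambda\g$, the domain argument for uniqueness, and the anti-isomorphism for the symmetric version — goes through.
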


\begin{remarks}\label{algo1}
\begin{enumerate}
\item{For the particular computation it is worth to note that it is not necessarily to compute all $b\star_\g \l$ with $|b|=m=|\l|$. It is enough to take such monomials $b$ of a minimal degree with $b\star_\g \l=0$ or $b\star_\g \l \in V_\g$, respectively. The constraint $|b|=m$ is required only to grant that $b\star_\g \l$ is an element of $V_\g$ when it is not zero.}
\item Although the "membership problem" is solved, the division algorithm is completely different from the case of polynomials in commutating variables  despite of the fact that it looks formally almost the same. First the "generalized remainder" $\d_\l$ by dividing $\l$ by $\g$ is, in general, not an element of $V_\g$. Secondly, the degree of $\d_\l$ can be arbitrary. In fact, we have no idea to describe the set $\{\d_\l=\sum b\d_b \, | \, \l \in \Lambda\}$. It is clear that every polynomial $\m \in \Lambda$ with $|\m|\leq |\g|$ can appear as a remainder at a division by $\g$ if $\m$ is {not} a scalar multiple of $\g$ in case of equal degree. For example, if $\g=1-x_1$, then $\m=1-x_2\notin \Lambda \g$ but its "strict remainder" in our sense is $x^*_1\m=x^*_1\equiv 1$ mod $\Leo\g$. Even for $m=1-x^l_1x_2$ we have $(x^*_1)^l\m=(x^*_1)^l-x_2\equiv 1-x_2\equiv 1$ mod $\Leo\g$. {Therefore for the particular use it is reasonable to determine only the set $R_\l$.}

\item To check the simplicity of  $\Leo/\Leo\p$ where $\p$ has constant, one needs compute $\Leo\p+\Leo \a$ where $\a$ runs over all nonzero elements of $V_\p$. This is a definitely harder, exhaustive job than the memership problem. According to Theorem \ref{main1} it can be verified in finitely many steps for each element $\a \in V_\p$. Hence the exercise can be carried out in finitely many steps when the base field $K$ is finite. If $K$ is infinite, then $V_\p$ has unfortunately infinitely many elements although it has a finite dimension. Consequently, finding right factors and irreducible factorizations of a polynomial is more difficult job.

\end{enumerate}
\end{remarks} 

Finally, by Theorem~\ref{goal1} we know that,  given $\lambda\in\Lambda$,  its irreducible decompositions are unique up to similarity. For some particular polynomials we can improve this result obtaining  the uniqueness of the irreducible factorization in the classical sense.

\begin{corollary}\label{unis} If $\l=\p_1\p_2 \cdots \p_l$ is a product of polynomials $\p_i$ similar to an irreduclible polynomial $\p$, then this factorization of $\l$ is unique up to a scalar, that is, if $\l=\d_1\cdots \d_l$ is another prime factorization of $\l$, then each $\d_j$ is a scalar multiple of $\p_j$ for $j=1, \cdots, l$.
\end{corollary}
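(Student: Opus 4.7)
The plan is to reduce the statement to the claim that the $\Leo$-module $M_\l = \Leo/\Leo\l$ has a \emph{unique} minimal nonzero submodule, and then to conclude by induction on $l$. For the preliminary reduction I would rescale so that $\p$ and all the factors $\p_i$, $\d_j$ are comonic: since $\l$ must be comonic, every irreducible factor in any factorization of $\l$ has nonzero constant term, and a scalar rescaling at the very end transports the conclusion back to the original polynomials.

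The simple-socle claim for $M_\l$ is the heart of the proof. By Corollary~\ref{unique1}(1) the composition factors of $M_\l$ are the simple modules $\Leo/\Leo\p_i$, and the similarity $\p_i\sim\p$ in $\Lambda$, equivalent to similarity in $\Leo$ by Corollary~\ref{unique1}(3), forces every composition factor, and in particular every simple submodule, of $M_\l$ to be isomorphic to the single simple module $\Leo/\Leo\p$. On the other hand Corollary~\ref{distri} asserts that the submodule lattice of $M_\l$ is distributive. The key observation is that a distributive module cannot contain two distinct isomorphic simple submodules $S_1 \cong S_2$: given an isomorphism $\f\colon S_1\to S_2$, the graph $\Delta = \{s+\f(s)\mid s\in S_1\}$ is a third simple submodule satisfying $S_1\cap(\Delta+S_2) = S_1$ while $(S_1\cap\Delta)+(S_1\cap S_2) = 0$, violating distributivity. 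Combining these two ingredients, $M_\l$ has a unique simple submodule.

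I would then induct on $l$, the case $l=1$ being trivial. For $l\ge 2$, each of the two factorizations yields via Corollary~\ref{unique1}(1) a composition series of $M_\l$, whose bottommost nonzero term is a simple submodule of $M_\l$, namely $\Leo\p_2\cdots\p_l/\Leo\l$ from one side and $\Leo\d_2\cdots\d_l/\Leo\l$ from the other. By uniqueness of the simple submodule these coincide, so $\Leo\p_2\cdots\p_l = \Leo\d_2\cdots\d_l$; since both generators are comonic, Corollary~\ref{proper} forces the stronger equality $\p_2\cdots\p_l = \d_2\cdots\d_l$. Left-cancelling this common right factor in the domain $\Lambda$ from $\p_1(\p_2\cdots\p_l) = \d_1(\p_2\cdots\p_l)$ gives $\p_1 = \d_1$, and the induction hypothesis applied to the length $l-1$ product $\p_2\cdots\p_l$ completes the proof. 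The only step requiring genuine thought is the diagonal-graph trick isolating the simple socle; everything else is either a direct invocation of results from this section or a routine cancellation.
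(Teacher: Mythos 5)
Your proof is correct and follows essentially the same route as the paper: both invoke the distributivity of the submodule lattice from Corollary~\ref{distri}, combine it with the fact from Corollary~\ref{unique1} that all composition factors are isomorphic to conclude uniseriality of $M_\l$, and then finish with Corollary~\ref{proper} to turn equality of principal left ideals into equality of comonic generators. The only difference is one of exposition: where the paper asserts as standard that a distributive module with pairwise isomorphic simple subfactors is uniserial, you prove the underlying lemma via the diagonal-graph argument (a distributive module cannot contain two distinct isomorphic simple submodules) and then make the induction on $l$ explicit; this is exactly the inductive unpacking of ``uniserial implies unique composition series,'' not a genuinely different strategy. One tiny terminological slip: what you call ``left-cancelling the common right factor'' is right cancellation in the domain $\Lambda$, but the step itself is valid since $\Lambda$ is a domain.
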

\begin{proof} Since $V_\l$ has the distributive submodule lattice by Corollary \ref{distri}, 
one obtains immediately that $V_\l$ and $\Leo/\Leo\l$ are uniserial modules, that is, their submodules form a unique finite chain if all of their simple subfactors are isomorphic. We then conclude the proof recalling that if $\Leo\d=\Leo\gamma$ for $\g$ and $\d$ in $\Lambda$ with constant, then by Corollary~\ref{proper} we get that 
$\delta$ is a scalar multiple of $\g$ 
\end{proof}

\section{Applications to factorizations of polynomials}
\label{fire}
 In the next two sections we give some hints for future directions that can be explored. 
As  shown by results and techniques exposed in the previous sections, free algebras and Leavitt algebras are deeply interrelated. Facts in free algebras are used to
investigate Leavitt algebras and vice versa. %Although interactive results become easily understandable and  transparently stated, 
However, there is sometime the need to treat them separately in their own language,  to better understand their  applications and their relevance in the setting we choose to focus on. For instance,  from this point of view,  results of Section
\ref{pro} provide an algorithm to find factorizations of $\l \in \Lambda$ starting from the right. By symmetry or by using the canonical $^*$ involution of $\Leo$ and the anti-isomorphism between $\Lambda$ and $\Lambda^*$, they yield also a way to find  factorizations of $\l$ starting from the left.
Moreover, given a comonic polynomial $\l\in\Lambda$,  instead of
 $\Leo/\Leo\l$ and $\Lambda^*/L$ one can use $V_\l$ and the algorithm described in proof of Theorem \ref{main1} to decide irreducibility, similarity, factorization, and more generally, to do standard arithmetic of non-commutative polynomials. Since $\End_{\Lambda}(V_\l)$ is a finite-dimensional division $K$-algebra and $V_\l$ has a quite easily handled $K$-basis it is interesting to  closely study $\End_{\Lambda}(V_\l)$.  The first question arising  is  which non-algebraically closed field $K$ has a comonic irreducible polynomial $\l \in \Lambda$ such that  $\End_{\Lambda}(V_\l)$ is non-commutative.  The next one is to  determine  non-commutative finite-dimensional division $K$-algebras which are not representable as  $\End_{\Lambda}(V_\l)$ of irreducible polynomials $\l$, and so on.

\subsection{Companions to a polynomial: similarity, remainders etc.}\label{sip1}
Polynomials in not necessarily commuting variables form perhaps the most classical and quite hard subject of algebra. By definition,  a polynomial $\l$ is dependent only on variables appearing in its expression. Hence it  does not depend on the carrying free algebra $\Lambda$, which may have even infinitely many variables. From this point of view the finite-dimensional space $V_\l$ seems to be a good tool, a companion data structure as a simple combinatorial invariant. It plays the role of  the module of remainders and it is easy to describe. As we have already noticed,  it can be used to determine whether $\l$ is irreducible and then to find, at least theoretically, its factorizations  together with other related structures like its endomorphism ring and  similar polynomials. By  definition, to find a polynomial $\g$ similar to $\l$ one needs first to compute the factor module $M{_\l}=\Lambda/\Lambda\l\cong \Leo/\Leo\l$, then find another generator $m$ of $M_{\l}$  such that  $\ann_{\Leo}m=\{r\in \Leo \, |\, rm=0\in M_\l\} \neq \Leo\l$, and finally look for a polynomial $\g\in \Lambda$ such that $\ann_{\Leo}m=\Leo\g$. Hence $\g$ is similar to $\l$.

However, this direct approach with a few exception is not feasible. Fortunately, for a comonic polynomial $\l$ one can use  $V_\l$ to provide a way to compute all polynomials similar to $\l$, that is, to determine the similarity class $\frak{S}\lambda$ of $\lambda$. Since $V_\l$ is finite-dimensional, it is theoretically feasible to find all the generators of $V_\l$ and then to any generator $m\in V_\l$ the unique comonic polynomial $\g_m$  similar to $\l$ satisfying $\Leo \g_m=\Leo L_m$, where $L_m$ is the image of  $\ann_{\Lambda}m=\{\m\in \Lambda\,\, |\,\, \m\star m=0\}$ under the canonical isomorphism from $\Lambda$ onto $\Lambda^*$, provided that such a $\g_m$ exists.
These results are presented in the next  claim.
\begin{proposition}\label{simpo} Let $\l\in \Lambda$ be a comonic polynomial. Then,
up to non-zero scalars and automorphisms of $V_\l$, every comonic polynomial $\g$  similar to $\l$ determines uniquely a generator $m_\g$ of $V_\l$.
\end{proposition}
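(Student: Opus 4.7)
The plan is to invoke Corollary~\ref{sim0} (equivalently Theorem~\ref{goal1}(3)), which says that two comonic polynomials $\g, \l \in \Lambda$ are similar exactly when there is a $\Lambda$-module isomorphism $V_\g \cong V_\l$. The module $V_\g$ is cyclic: by Lemma~\ref{pre4} there is an identification $V_\g \cong \Lambda^*/L_\g$, whose canonical generator $1 + L_\g$ corresponds in $V_\g$ to the constant cofactor $\g_1$, which equals $1$ because $\g$ is comonic. Hence $1 \in V_\g$ is a distinguished cyclic generator intrinsically attached to $\g$, and this is the input the construction needs.

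Given this, the construction is forced: pick any $\Lambda$-linear isomorphism $\phi \colon V_\g \to V_\l$ and set $m_\g := \phi(1)$. Since $\phi$ is an isomorphism and $1$ generates $V_\g$ cyclically, $m_\g$ generates $V_\l$. For uniqueness, observe that any two such isomorphisms $\phi, \psi \colon V_\g \to V_\l$ differ by an element $\alpha := \psi \phi^{-1} \in \Aut_\Lambda(V_\l)$, so $\psi(1) = \alpha(m_\g)$. Hence $m_\g$ is well defined modulo $\Aut_\Lambda(V_\l)$. The nonzero scalars $K^*$ always act on $V_\l$ by $\Lambda$-automorphisms, so scalar rescalings are automatically subsumed in this ambiguity; they are listed separately in the statement because they form the whole of the ambiguity exactly when $\End_\Lambda(V_\l) = K$, for example when $\l$ is irreducible (Corollary~\ref{step1}).

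The main subtlety, and the only real obstacle, is pinning down the distinguished generator $1 \in V_\g$ canonically from $\g$ itself; comonicity is precisely what makes this unambiguous, since otherwise rescaling $\g$ would already distort the identification with $\Lambda^*/L_\g$. Any alternative choice of cyclic generator of $V_\g$ would only compose $\phi$ with an element of $\Aut_\Lambda(V_\g)$, which transports through $\phi$ to an automorphism of $V_\l$, leaving the resulting $m_\g$ in the same $\Aut_\Lambda(V_\l)$-orbit. This is the essential check. For completeness, the reverse correspondence (not claimed in the proposition but clarifying the picture) goes as follows: starting from a cyclic generator $m \in V_\l$, the ideal $L_m := \ann_{\Lambda^*}(m)$ is a finite-codimensional left ideal of $\Lambda^*$, and by Corollary~\ref{acc} applied via the canonical identification $\Lambda^* \cong \Lambda$, the extended left ideal $\Leo L_m$ of $\Leo$ is principal, generated by a unique-up-to-scalar comonic polynomial $\g_m \in \Lambda$, which is then similar to $\l$.
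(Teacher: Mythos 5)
Your proof is correct and follows essentially the same route as the paper's own (very terse, one-sentence) proof: invoke Corollary~\ref{sim0} to get $V_\g \cong V_\l$, then transport a canonical cyclic generator and observe that the choice of isomorphism is absorbed into $\Aut_\Lambda(V_\l)$. The only minor imprecision is the phrase ``the constant cofactor $\g_1$, which equals $1$ because $\g$ is comonic''---$\g_1$ is not a symbol the paper defines, and the constant-valued cofactors of $\g$ are the coefficients $k_b$ of its maximal monomials, which need not equal $1$; what is true (and suffices for your argument) is that $V_\g$ contains nonzero constants, hence contains $1$, and under the identification $V_\g \cong \Lambda^*/L_\g$ of Lemma~\ref{pre4} the element $1 \in V_\g \subseteq \Lambda^*$ is sent to $1+L_\g$.
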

\begin{proof}  
 By Corollary \ref{sim0},  the $\Lambda$-modules $V_\g$ and $V_\l$ are isomorphic, whence $\g$ determines uniquely a generator $m_\g$ of $V_\l$ up to automorphisms of $V_\l$, where the automorphism group of $V_\l$ is isomorphic to that of the $\Leo$-module ${\Leo}/{\Leo\l}$. 
\end{proof}

\begin{remark}\label{simpo1} This assertion can be considered as a starting step describing similarity classes of polynomials. As the first trivial application one sees immediately that the similarity class of a linear polynomial with constant has exactly one element up to  scalars. In view of  \cite[1st ed. Proposition 3.3.4]{c2} similar polynomials provide a method to find different not permutated factorizations. %Moreover, similar polynomials underline an important hidden fact on simple algebraic field extensions. There are clearly different irreducible polynomials $p\in K[x]$ defining the same simple algebraic field extension $F$ of $K$. By Remark~\ref{n=1} all these polynomials are similar. 
In particular, until Proposition \ref{simpo1} there is no practical method finding similar polynomials. A thorough study of similar polynomials will be a subject of forthcoming works.   
\end{remark}
\begin{example}\label{ex0} {In commutative algebra it is well-known that  the maximal ideals of the polynomial algebra $A=K[x_1, \cdots, x_n]$ of codimension 1, are the ideals $\sum\limits_{i=1}^n A(x_i-k_i)$ where ${\frak k}=(k_1, \cdots, k_n)\in K^n$ runs over the points in the $n$-dimensional affine space $K^n$. For free associative algebra $\Lambda_n$, by passing to the corresponding factors, non-trivial one-dimensional $\Lambda_n$-modules are clearly parameterized by the polynomials $1-\sum\limits_{k_i\neq 0}k_ix_i$ for points ${\frak k}=(k_1, \cdots, k_n)\in K^n$. }
\end{example} 
\begin{example}\label{ex1} Let $\l=1+x_1x_2 \in \Lambda=\Lambda_2$. We compute the similarity class of $\l$. By definition the 2-dimensional $K$-space $V_\l=\{1, x_2\}$ is a $\Lambda_2$-module with respect to $x_1\star 1=-x_2$ and $x_2\star 1=0$. One can see that $\l$ is irreducible and $\{1+kx_2,\, x_2\,\, |\,\,  k\in K\}$ is the set of all generators of $V_\l$. Put $v=1\in V_\l$, then $\ann_{\Lambda}v$ is a $\Lambda_2$-module freely generated by $\{x_2, x^2_1, 1+x_2x_1\}$ whence $L_v$ is freely generated by $\{x^*_2, (x^*_1)^2,\, 1+x^*_2x^*_1\}$. Consequently, $\Leo L_v=\Leo\l$ because $\l=x_2x^*_2+x^2_1(x^*_1)^2+x_1x_2(1+x^*_2x^*_1)$. For $w=x_2\in V_\l$ we see that $\ann w$ is freely generated by $\{x_1, x^2_2, 1+x_1x_2\}$ and so by symmetry $\Leo L_w=\Leo(1+x_2x_1)$, that is, $\l$ is similar to $1+x_2x_1$. 

To describe the similarity class of $\l$, for any $0\neq k\in K$ let $v_k=1+kx_2$
be an arbitrary generator of $V_\l$ different from $1$ and $x_2$. We intend to look for the unique comonic polynomial $\g_k\in \Lambda$  such that  $\Leo\g_k=\Leo L_k$, where $L_k$ is the image of $\ann v_k$ in $\Lambda^*$ with respect to the canonical isomorphism from $\Lambda$ onto $\Lambda^*$. If such a $\g_k$ exists,  since $V_{\g_k}$ contains 1 and has $K$-dimension 2, $\g_k$ must have degree 2 in view of the definition of $V_{\g_k}$. Moreover, $V_{\g_k}$ has a $K$-basis $\{1, u=k_1x_1+k_2x_2\neq 0 \}$ whence at least one of the $k_1, k_2 \in K$ must be not zero. Furthermore $\g_k=1+x_1\g_1+x_2\g_2$ where $\g_1=l_1+l_2u, \g_2=m_1+m_2u$ with appropriate coefficients $l_1, l_2$ and $m_1, m_2$ from $K$ and at least one of the coefficients $l_2, m_2$ is not zero. Expressing
$$\g_k=\g=1+x_1[l_1+l_2(k_1x_1+k_2x_2)]+x_2[m_1+m_2(k_1x_1+k_2x_2)]$$
we have
$$\g_k=\g=1+l_1x_1+k_1l_2x^2_1+k_2l_2x_1x_2+m_1x_2+k_1m_2x_2x_1+k_2m_2x^2_2.$$
Hence $\g$ generates $\Leo L_k$ if $\Lambda^*\cap \Leo\g \subseteq L_k$, i.e.,  $\{x^*_ix^*_j\g \,\,|\, i, j=1, 2\} \subseteq L_k$. In  other words, we have to see whether
$$\{(x^*_1)^2+l_1x^*_1+l_1l_2, (x^*_2)^2+m_1x^*_2+m_2k_2, x^*_1x^*_2+m_1x^*_1+k_1m_2, x^*_2x^*_1+l_1x^*_2+k_2l_2\}$$
is contained in $L_k$. This means whether 
$$\{x^2_1+l_1x_1+l_1l_2, x^2_2+m_1x_2+m_2k_2, x_1x_2+m_1x_1+k_1m_2, x_2x_1+l_1x_2+k_2l_2\}$$
annihilates $v_k\in V_\l$.   
By $u\star v_k=(k_1x_1+k_2x_2)\star (1+kx_2)=-k_1x_2+kk_2$, the equality $0=(x^2_1+l_1x_1+l_2l_1)\star v_k=(-l_1+k)x_2+l_1l_2$ implies $l_1=k\neq 0$ and $l_2=0$. Furthermore $0=(x^2_2+m_1x_2+m_2k_2)\star v_k=m_1k+m_2k_2+kk_2m_2x_2$ implies $k_2m_2=0$ whence $m_1k=0$ and so
$m_1=0$ follows. Consequently, $0=(x_1x_2+m_1x_1+k_1m_2)\star v_k=k_1m_2+(kk_1m_2-k)x_2$ implies $k=0$ which contradicts  the assumption $k\neq 0$. This shows that $\Leo L_k$ cannot be generated by a polynomial from $\Lambda$ and so the similarity class of $\l$ has just two  comonic elements, namely, $\l$ and $1+x_2x_1$. 

This example shows two important facts. First not every principal
left ideal of $\Leo$ whose factor module is isomorphic to some $V_\l$ of a polynomial $\l \in \Lambda$ with constant (consider for instance $L_k$ as above), has a generator which is a polynomial with constant from $\Lambda$.  Consequently, not every finite-codimensional maximal left ideal of $\Lambda^*$ can be represented as $V_\l=\Lambda^* \cap \Leo\l$ for an appropriate  comonic polynomial $\l$ in $\Lambda$. The second fact is that it provides a way to determine the similarity class of polynomials with constant. Last but not least this example suggests a project to classify generators $v$ of $V_\l$ satisfying $\Leo L_v=\Leo\g$ for an appropriate $\g \in \Lambda$, where $L_v$ denotes the image of $\ann v$ in $\Lambda^*$ under the canonical isomorphism from $\Lambda$ to $\Lambda^*$. {In the same manner one can decide which 2-dimensional $\Lambda_2$-module can be described as $V_\l$ for an appropriate comonic polynomial of degree 2, and one can ask the same question for {small }dimensions.} 
\end{example}

\subsection{Factorizations of  polynomials}\label{fac}
Using Leavitt algebras to investigate polynomials requires that the studied  polynomials must be with constant. In view of Lemma \ref{pre2}, a polynomial $\l\in \Lambda$ without constant induces a finite set $\{\l_b\}$ of its cofactors,   where the  $\l_b$'s run over monomials of minimal degree in $\l$ and they are with constant. So the $\l_b$'s admit a greatest common divisor $\d$ by Theorem~\ref{main1}. Hence
$\d$ divides $\l$ if $\d$ is not a constant. In this way one can find all right factors of $\l$ with constant and by symmetry left factors with constant, too. However, this argument does not work in  case $\d\in K$. One can avoid this restriction by an appropriate substitution. For example, one can pass from  the polynomial $x^2_1x^3_2$ to the polynomial $(x_1+1)^2(x_2+1)^3$. {The feasibility of this trick depends on a problem whether there exists a polynomial $\l$ with zero constant which remains with zero  constant term  at any possible substitution of variables.}This opens the question whether the intersection of nonunital subalgebras of codimension 1 is zero as we have already pointed out. 

In any case, using Leavitt algebra may help to find factors with constant of a polynomial $\l$ without constant when $\Leo\l\neq \Leo$. For example, let 
$$\l=x_1x_2x_3x_2x_1+x_1x_2x_3+x_1x_2x_1+x_1+x_3x_2x_1 +x_3$$ 
be the polynomial considered by Vol\v ci\v c \cite{v1}.  Then by Corollary \ref{pre2} $\Leo \l=\Leo\l_1+\Leo\l_3=\Leo\l_3$ by $\l_1=\l_3+x_2x_3\l_3$, where $\l_1=x_2(x_3x_2x_1+x_1+x_3)+1$ and $\l_3=x_2x_1+1$. Consequently, $\l_3$ is a right factor of $\l=(x_1x_2x_3+x_1+x_3)\l_3$. On the other hand, one can use left factors of $\l$, that is, left factors of the image of $\l$ with respect to the $\ast$-adding isomorphism to obtain another factorization of $\l=(x_1x_2+1)(x_3x_2x_1+x_3+x_1)$. Iterating the argument of this example one obtains immediately together with Cohn's characterization of similarity \cite[Proposition 3.3.4]{c2}  the following result on factorizations of polynomials without constant.
\begin{proposition}\label{constant1} Let $\l \in \Lambda$ be a polynomial without constant and $\overline \l$ the image of $\l$ by the canonical isomorphism from $\Lambda$ onto $\Lambda^*$. If $\Leo \l\neq \Leo\neq {\overline \l}\Leo$, then $\l$ has two different factorizations $\l=\a \b=\g \d$  such that
$\a, \d$ have constant and $\b, \g$ have no constant.
\end{proposition}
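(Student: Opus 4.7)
The plan is to extract each of the two factorizations from one of the two hypotheses separately. The hypothesis $\Leo\l \neq \Leo$ will produce the factorization $\l = \g\d$ whose right factor $\d$ carries the constant, while $\overline\l\,\Leo \neq \Leo$ will produce the factorization $\l = \a\b$ whose left factor $\a$ carries the constant; the two will then be automatically distinct.

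First I would unwind the left-ideal side. Since $\l$ has zero constant, Lemma~\ref{pre2} writes $\Leo\l = \sum_j \Leo\,\gamma_j$ with each $\gamma_j$ a cofactor of $\l$ carrying a nonzero constant, and iterating Theorem~\ref{main1} reduces this finite sum to a single principal left ideal $\Leo\d$ for a comonic $\d \in \Lambda$. The hypothesis $\Leo\l \neq \Leo$ forces $\d$ to be a proper comonic polynomial (not a scalar). Lemma~\ref{pre1} then gives $\l \in \Leo\d \cap \Lambda = \Lambda\d$, so $\l = \g\d$ for a unique $\g \in \Lambda$. Comparing constant terms on both sides (zero on the left, and the nonzero constant of $\d$ times the constant of $\g$ on the right) forces $\g$ to have zero constant term.

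Next I would transfer the same argument to the right-ideal side via the canonical involution of $\Leo$ from Corollary~\ref{sub2}. That involution is an anti-automorphism of $\Leo$ exchanging left ideals with right ideals and $\Lambda$ with $\Lambda^*$, and it sends $\overline\l \in \Lambda^*$ to the transpose ${}^{\tau}\l \in \Lambda$. Thus $\overline\l\,\Leo \neq \Leo$ is equivalent to $\Leo\,{}^{\tau}\l \neq \Leo$, and the polynomial ${}^{\tau}\l \in \Lambda$ still has zero constant. Running the previous step on ${}^{\tau}\l$ produces ${}^{\tau}\l = \zeta\,\eta$ with $\eta \in \Lambda$ comonic and nontrivial, and $\zeta \in \Lambda$ without constant. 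Applying transposition (an anti-automorphism of $\Lambda$ preserving constant terms) to both sides yields $\l = \a\b$ with $\a = {}^{\tau}\eta$ comonic and $\b = {}^{\tau}\zeta$ without constant. The two factorizations $\l = \a\b = \g\d$ are then distinct, since $\a$ has a nonzero constant while $\g$ does not.

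The main obstacle, and the only point requiring real care, is the right-left transfer in the second step: one must verify that the canonical involution of Corollary~\ref{sub2} genuinely reduces the right-ideal condition to the already-established left-ideal case, which hinges on the identity $(\overline\l)^{*} = {}^{\tau}\l$ under that involution, on the preservation of the condition ``without constant'' by transposition, and on the preservation of comonicity by transposition. Once these are in place, the rest is a direct assembly. Cohn's \cite[Proposition~3.3.4]{c2} is not needed for the mere existence of the two factorizations, although it can be invoked afterward to match $\a$ with $\d$ and $\g$ with $\b$ up to similarity.
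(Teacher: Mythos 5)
Your proof is correct and follows the same strategy as the paper's very terse argument: the hypothesis $\Leo\l\neq\Leo$ yields $\l=\g\d$ via Lemma~\ref{pre2}, Theorem~\ref{main1} and Lemma~\ref{pre1}, while the other hypothesis is handled ``by symmetry.'' Your explicit rendering of that symmetry through the $*$-involution of Corollary~\ref{sub2} and the identity $(\overline\l)^{*}={}^{\tau}\l$ simply fills in what the paper leaves implicit.
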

\begin{proof} The assumption that $\Leo \l\neq \Leo$ implies that $\l$ has a factorization $\l=\g \d$ such that $\d$ has constant and $\g$ has no constant. The condition ${\overline \l}\Leo\neq \Leo$ implies by symmetry that ${\overline \l}={\overline a}{\overline \b}$, that is, $\l$ has a factorization $\l=\a \b$ where $\a$ has constant but $\b$ has no constant.
\end{proof} 
Therefore we have a way to produce polynomials having different factorizations. Just take a product  $\l=\a\b$ such that $\a$ is a product of irreducible polynomials with constant and $\b$ is a product of irreducible ones without constant. If $\Leo \l\neq \Leo$, then $\l$ admits another factorization $\l=\g \d$ where $\d$ has constant. In view of  the results
of Section \ref{pro},  it is a routine matter to check whether $\Leo\l=\Leo$ holds.  One can start with any polynomial $\l$ without constant and then use its left or right cofactors to check the inequalities $\Leo\l\neq \Leo\neq {\overline \l}\Leo$ where $\overline \l$ is the image of $\l$ by the canonical isomorphism in $\Lambda^*$. This argument can not be applied  for  polynomials $\l$ without constant satisfying $\Leo\l=\Leo=\l \Leo$ such that $\l$ remains without constant under any  variables substitution, even in the case when $\l$ has a factorization $\l=\a\b\g$ where $\a$ and $\g$ are polynomials without constant and $\b$ is one with constant. 

In order to find atomic factorizations of a comonic polynomial $\l\in \Lambda$, we focus first to a right comonic factor $\g\in \Lambda$ of $\l$. This means exactly that there is a positive integer $l\in \N$ such that $b\star_\g \l=0$ for all monomials $b\in \bB$ of degree $l$. So this is  a routine calculation to verify whether $\g$ is a right factor of another comonic polynomial. To compute the polynomial $\d$ satisfying $\l=\d \g$ we use the equality for each index $i\in \{1, 2, \cdots, n\}$
$$x_i\star_\g \l=x_i\star_\g \d\g=x_i\star_\g (\g+{\bar \d}\g)=\d_i\g.$$
Taking in account that $\d=1+{\bar \d}=1+\su x_i \d_i=1+\su x_i\d_{x_i}$ the determination of $\d$ reduces to compute
now a not necessarily comonic polynomial $\d_1$ with constant satisfying $\d_i \g=x_i\star_\g \l$ where for each $i\in \{1, 2, \cdots, n\}$ the polynomial $x_i\star_\g \l$ has lower degree. Thus in principle one can compute the left quotient $\d$ of $\l$ at the right division by $\g$.  

We are now in position to find atomic factorizations of $\l$. It is routine to compute $V_\l$. {Then one can continue as for integers. Take an arbitrary comonic polynomial $\g \in V_\l$ and verify if $\g$ does divide $\l$ by computing $b\star_\g \l$ for all monomials $b\in \bB$ with $|b|=|\l|$. In the positive case one finds further a factorization $\l=\d \g$, and the problem of factorization reduces to one of lower degree.  In principle this can be carry out without difficulty  in  case of a finite basic field $K$. However, there is a better way with possibly less computations as follows.}
By Corollary~\ref{unique1},  irreducible left factors of $\l$ correspond to minimal submodules of $V_\l$. So one can look for a minimal submodule $S$ of $V_\l$, then to a generator $m$ of $S$ and finally to an irreducible polynomial $\p$ determining $m$ as in \ref{sip1}. Next step is to check whether $\p$ divides $\l$ from the left. To check this matter, we need to use $W_\p$ which is the  simple finite-dimensional right $\Lambda$-module  associated to $\p$ defined in Remark \ref{leftright}, and verify whether $\l \star_\p b=0$ for all monomials $b \in \bB$ of degree $|\l|$. If $\p$ divides $\l$, then one can use the right symmetric version of the above described process to compute the right quotient $\l_1$ of $\l$ at the left division by $\p$, i.e., $\l=\p\l_1$ and continue in the same way for $\l_1$ instead of $\l$. If $\p$ does not divide $\l$, then 
look for the other  generators of $S$,  until we find a polynomial similar to $\p$ which divides $\l$. This method can be carried out step by step until an atomic factorization of $\l$ is found. This is a method to find atomic factorizations of $\l$ from the left hand side. To find right irreducible factors of $\l$, and then atomic factorizations of $\l$ starting from the right hand side, one need first to compute $W_\l$, generators of the simple submodules of $W_\l$ and then irreducible polynomials $\p$ representing them which become the right factors of $\l$. Of course the process can be very hard, complicated from a computational point of view. It is also worth to note that we have to use here both the left and right version of Theorem \ref{main1} to get all possible factorizations of $\l$.

\section{Final remarks and further open questions}
\label{infirank}
In this last section we continue a discussion of an individual polynomial in noncommuting variables. One companion structure of a polynomial is a primary decomposition. A polynomial $\l$ is called \emph{primary} if the associated module $V_\l$ is indecomposable. In contrast to the case of polynomials in commuting variables, primary polynomials in noncommuting variables have a great diversity. For example, $x_1x_2x_3$ and its permutated polynomials $x_ix_jx_k$ are all primary and none of them is similar to the other. Since $V_\l$ is finite-dimensional, it is a direct sum of its indecomposable submodules and this decomposition is unique up to isomorphism by Krull-Schmidt theorem. A challenging problem is to find factorizations of a comonic polynomial $\l$ into products of primary polynomials.

\subsection{Primary decomposition of a polynomial}\label{primary} For a comonic polynomial $\l \,(|\l|\geq 1)$  the $K$-vector space $V_\l$ is a  finite-dimensional $\Lambda$-module. Hence it decomposes uniquely as  a direct sum of indecomposable submodules $V_j$ up to  permutations. Indecomposable direct summands of $V_\l$ are called \emph{primary components} of $V_\l$. They are  determined by right factors $\p_j$ of $\l$ and can be considered as invariants of $\l$. However, we know almost nothing about them. For example we do not know whether $\l$ is their product. Since $V_\l$ has a distributive submodule lattice, Corollary~\ref{unis} shows  that, if $\l=\p_1\p_2 \cdots \p_l$ is a product of polynomials $\p_i$ similar to the same irreducible polynomial $\p$, then $V_\l$ is uniserial and hence it has a unique primary component.

It is an interesting project to study primary decompositions closely. For instance, in contrast to the commutative case, there are polynomials $\l$ such that $V_\l$ is indecomposable and their simple subfactors are pairwise non-isomorphic. For example $x_1x_2$ is a product of two non-similar irreducible factors  and $\Lambda/\Lambda x_1x_2$ is indecomposable. Substituting $x_1+1, x_2+1$ for $x_1. x_2$, respectively, one gets $\l=(1+x_1)(1+x_2)=1+x_1(1+x_2)+x_2$, a polynomial with constant. By definition $V_\l$ is a $K$-space generated by $1, 1+x_2$. It is now obvious by immediate calculation that $V_\l$ is indecomposable of length 2 with endomorphism ring isomorphic to $K$.

Finally,  one can  use the distributivity of the submodule lattice of $V_\l$ to show that not every finite-codimensional left ideal of $\Lambda^*$ can be represented as $L_\l=\Leo\l\cap \Lambda^*$ for some $\l \in \Lambda$, as we already  saw in Subsection \ref{sip1}. Namely, let $L_1, L_2$ be different finite-codimensional maximal left ideals of $\Lambda^*$ such that their associated primitive ideal is not $I^*$ and such that the factors $\Lambda^*/L_i \, (i=1, 2)$ are isomorphic. There are a great number of choices for such maximal left ideals, for example by $L_\l$ where $\l$ runs over similar irreducible polynomials with constant in $\Lambda$. Put $L=L_1\cap L_2$. Then $\Lambda^*/L=N$ and hence $\frak N=\Leo\otimes_{\Lambda^*}N$ is a direct sum of two  isomorphic simple modules. Therefore their submodule lattice is not distributive whence $L$ can not be represented as $L_\l$ for some $\l \in \Lambda$ with constant.

In fact, there are even maximal two-sided ideals of   $\Lambda^*$ with division factor algebras which cannot be represented as $V_\l$ for some comonic polynomial $\l \in \Lambda$. This claim is obvious for infinite-dimensional division factor algebras of $\Lambda^*$ because all $V_\l$ are finite-dimensional. Furthermore, if $K$ is the field $\mathbb{R}$ of the reals and $\mathbb{H}$ is the field of  the real quaternions given by the two-sided ideal $\langle{x_1^*}^2+1,  \ {x_2^*}^2+1, \  x^*_1x^*_2+x^*_2x^*_1 \rangle$, then $\mathbb{H}$ cannot be represented as $V_\l$ for any appropriate comonic polynomial $\l \in \mathbb{R} \langle x_1, x_2\rangle$. 
\subsection{Free algebra of arbitrary rank}\label{infi}
 It is clear that any element $\l$ of a free algebra contains only finitely many variables. Hence its study does not essentially depend on the other variables. Therefore from now on it is more
natural to assume that $\l$ is a polynomial in a free algebra $K\langle X\rangle$ on an arbitrary possibly infinite variable set $X$. In this case, $V_\l$ remains a finite-dimensional $\Lambda$-module but the equality $\Leo(X)\l=\Leo(X)(\Lambda^*\cap \Leo\l)$ does not hold. Here $\Leo(X)$ denote the Leavitt algebra by adding only a right inverse of the row matrix $X$, that is, it is generated by 
$\{x, x^*\, |\, x\in X\}$ subject to 
$$y^*x=\begin{cases}1& \text{if}\qquad x=y,\\ 0& \text{if}\qquad x\neq y\end{cases}$$
 for any two $x, y\in X$ when $X$ is infinite. For example, for infinite $X$ the right ideal $\Leo(X)(1-x)\cap \Lambda^*=\Lambda^*(1-x^*)+\sum\limits_{y\neq x}\Lambda^*y^*=L$ is still maximal but $\Leo(X)(1-x)\neq \Leo L$. However, $\Leo(X)\l$ is still a left ideal of colength $l$ if $\l=\p_1\cdots \p_l$ is a factorization of $\l$ into $l$ irreducible factors. To verify this result we need first an infinite version of Theorem \ref{main1}.
\begin{theorem}\label{main2} Let $\Lambda=K\langle X\rangle$ be the free algebra on an infinite set $X$ and $\Leo=\Leo(X)$ the corresponding Leavitt algebra. Then for any two polynomials $\d, \g \in \Lambda$ there is a comonic polynomial $\l \in \Lambda$  with $\Leo\d+\Leo\g=\Leo \l$. In particular, any submodule of the $\Leo$-module $_{\l}M=\Leo/\Leo\l\cong \Lambda/\Lambda\l$ is cyclic and can be generated by some comonic $\g\in \Lambda$. 
\end{theorem}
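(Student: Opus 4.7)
The plan for the first claim is to mimic the inductive argument of Theorem \ref{main1} verbatim, checking that each ingredient survives when $X$ is infinite. Concretely, I would first verify that Lemma \ref{pre1}, Lemma \ref{pre2} and Lemma \ref{pre3} still hold in $\Leo(X)$: their proofs only require that for $\d=-\underline{\l}$ the product $r\d^{l+1}$ eventually lands in $\Lambda$, which is true here because each right multiplication by $\d$ consumes one $x^*$ from the right of every monomial tail and annihilates any $x^*$ whose variable is outside the finite set $Y_\l$ of variables appearing in $\l$. Next, I would establish the central congruence $b^*\g\equiv{}^\tau b\star_\l\g\pmod{\Leo\l}$ for every monomial $b\in \bB(X)$ and every $\g\in\Lambda$ by induction on $|b|$, using only the orthogonality relations $y^*x=\delta_{xy}$ and the observation $y^*\equiv -\l_y\pmod{\Leo\l}$ in the base case. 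With this congruence the forward direction of Theorem \ref{test} follows from $V_\l\cap\Leo\l=0$ (Lemma \ref{pre1} plus the degree bound); the reverse direction is the delicate step, handled below. Once both directions of Theorem \ref{test} are available, the proof of Theorem \ref{main1} proceeds unchanged: given $\d,\g$ not necessarily comonic, reduce each by the Lemma \ref{pre2} extension to a finite sum of comonic polynomials, and then run the induction on $\max(|\d|,|\g|)$ using Theorem \ref{test} to extract $\d_1=b_1\star_\d\g$ of strictly smaller degree inside $\Leo\d+\Leo\g$.

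For the second claim, I would first deduce from the first claim that $\Leo/\Leo\l$ has finite length as an $\Leo$-module: the proofs of Corollary \ref{step1}(1) ($\l$ irreducible $\Rightarrow$ $\Leo/\Leo\l$ simple) and Corollary \ref{unique1}(1) (composition factors from an atomic factorization $\l=\p_1\cdots\p_m$) transfer intact once Theorem \ref{main1} is available, yielding a composition series of length $m$. Then, given $\frak L\supseteq\Leo\l$, I would build an ascending chain $\Leo\l=\Leo\m_0\subsetneq\Leo\m_1\subsetneq\cdots\subseteq\frak L$ of principal ideals with comonic generators recursively: if $\frak L\neq\Leo\m_i$, choose $x\in\frak L\setminus\Leo\m_i$, replace $x$ modulo $\Leo\m_i$ by an element of $\Lambda$ (Lemma \ref{pre3} extension), split that element into comonic summands (Lemma \ref{pre2} extension), and combine each summand with $\m_i$ via the first claim to obtain a comonic $\m_{i+1}$ with $\Leo\m_i\subsetneq\Leo\m_{i+1}\subseteq\frak L$. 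Finite length of $\Leo/\Leo\l$ forces the chain to terminate, giving $\frak L=\Leo\m$ for the desired comonic $\m$; this $\m$ is a right divisor of $\l$ by Lemma \ref{pre1}.

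The main obstacle throughout is the failure of the completeness identity $\sum_{x\in X}xx^*=1$ in $\Leo(X)$ when $X$ is infinite, an identity which in the finite-rank proofs underlay both Lemma \ref{sub1} (hence Corollary \ref{flatpro}(1) and Corollary \ref{acc}) and the reverse direction of Theorem \ref{test}. I would bypass the latter by invoking Theorem \ref{divi} directly: its division identity $\g=\r\l+\sum_{|b|=|\g|}b\cdot(b\star_\l\g)$ is a purely polynomial statement inside $\Lambda$ and needs no Leavitt completeness, so vanishing of every remainder gives $\g\in\Lambda\l\subseteq\Leo\l$. I would bypass the former by using finite length of $\Leo/\Leo\l$ to force termination of the chain above, rather than a Bezout-style recovery of $\frak L$ from $\frak L\cap\Lambda^*$. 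The most error-prone step I expect is the careful bookkeeping in the key congruence when $b$ contains letters $y\notin Y_\l$: one has to check that each such letter is either absorbed by orthogonality against a variable of $\l$ or carried through trivially without creating spurious terms in $V_\l$.
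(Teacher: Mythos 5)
Your proposal is correct and follows essentially the same strategy as the paper's proof. The paper's central observation is the congruence $\sum_{x\in X_\a} x(x\star_\l \a)\equiv \a \pmod{\Lambda\l}$ with $X_\a$ finite, which is exactly the one-step version of the division identity you invoke to sidestep the failure of $\sum_{x\in X}xx^*=1$; and your explicit ascending-chain argument for the second claim, terminating by finite length of $M_\l$, is precisely the mechanism the paper's terse ``in particular'' tacitly relies upon.
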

\begin{proof} From the definition one sees immediately that Lemma \ref{pre2} remains true whence by a trivial induction one can assume without loss of generality that both $\d, \l$ are comonic. Moreover, one can define a $\star_\l$-action of $\Lambda$ on itself for any  comonic polynomial $\l \in \Lambda$ in the same way as it was defined before Theorem \ref{goal1} whence $\Lambda/\Lambda\l$ becomes naturally a left $\Leo$-module isomorphic to $M_{\l}=\Leo/\Leo\l$. Therefore one has
$x\star_\l \a=-k_\a k^{-1}_\l \l_x+\a_x$ if $\l=k_\l+\sum\limits_{x\in X}x\l_x, \,\, \a=k_\a+\sum\limits_{x\in X}x\a_x, \, (0\neq k_\l, k_\a \in K)$ where $\b_x$ denotes the cofactor of $\b\in \Lambda$ with respect to $x\in X$. If we denote by $X_\a$ the subset of variables appearing in $\a$, then $X_\a$ is finite and $\sum\limits_{x\in X_\a}x^*$ acts as the identity on $\a$ mod $\Leo\l$, that is,
$$\sum\limits_{x\in X_\a}x(x\star_\l \a)\equiv \a \quad \mod \Lambda\l.$$
Consequently, if $b$ runs over the finite set of all monomials in $x\in X_\a$ of the degree $\a$, then at least one of these $b\star_\l \a$ is a non-zero element of $V_\l$, hence of degree $\leq |\l|-1$. These observations show that almost all results of Sections \ref{leat} and \ref{pro} together with their proof remain true if they are formulated by using $\Leo\l, V_{\l}$ and $M_{\l}(\cong \Lambda/\Lambda\l)$ and not involving $L_\l=\Lambda^*\cap \Leo\l, \, N_\l=\Lambda^*/L_\l$. Thus we have
\begin{corollary}\label{sup1} Let $\l \in \Lambda=K\langle X\rangle \,(|X|=\infty, \,\, |\l|\geq 1)$ comonic. Then $V_\l$ is a finite-dimensional
$\Lambda$-module with respect to the $\star_\l$-action which embeds essentially in $\Lambda/\Lambda\l \cong M_{\l}=\Leo/\Leo\l$. Both $V_\l$  and the $\Leo$-module $M_{\l}$ are simple with endomorphism ring isomorphic to $K$ if $\l$ is linear. For any $\a \in \Lambda$ not contained in $\Lambda\l$ there is a monomial $b$ in variables of $\a$ of degree $|\a|$ with $0\neq b\star_\l \a \in V_\l$. 
\end{corollary} 
In view of Corollary \ref{sup1} the proof of Theorem \ref{main1} works also in this case, and so Theorem \ref{main2} is verified.
\end{proof}
In view of Corollary \ref{sup1} and Theorem \ref{main2} the same argument in the last part of Section \ref{pro} implies the more general form of Theorem \ref{goal1} as follows.
\begin{theorem}\label{goal2} Let $\g, \l \in \Lambda=K\langle X\rangle \,$ be comonic polynomials  of positive degree where $X$ has an arbitrary rank, together with finite-dimensional left $\Lambda$-modules $V_\g, V_\l$  with respect to the $\star$-action defined by $\g, \l$, respectively. Let $\Leo(X)$ be the corresponding Leavitt algebra defined by
$\Lambda$.
\begin{enumerate}
\item $\l$ is an irreducible polynomial if and only if $M_{\l}=\Leo(X)/\Leo(X)\l$ and $V_\l$ are simple modules over $\Leo(X)$ and $\Lambda$, respectively. 
\item If $\l=\p_1\cdots\p_m$ is a factorization of $\l$ into a product of irreducible polynomials, then $m$ is the length of both $M_{\l}$ and ${_\Lambda}V_\l$ whence $m$ is an invariant of $\l$.
\item $V_\l\cong V_\g\Longleftrightarrow \Lambda/\Lambda\g \cong \Lambda/\Lambda\l\Longleftrightarrow M_{\g}\cong M_{\l}$, that is, $\g$ and $\l$ are similar over $\Lambda$ iff they are such over $\Leo(X)$.
\end{enumerate} 
\end{theorem}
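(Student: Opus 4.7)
The plan is to follow the blueprint of Section~\ref{pro} while systematically replacing every appeal to $L_\l=\Lambda^*\cap \Leo\l$ or $N_\l=\Lambda^*/L_\l$ by a direct argument using $V_\l$, $\Lambda/\Lambda\l$, and $M_\l=\Leo(X)/\Leo(X)\l$. Corollary~\ref{sup1} already supplies the key inputs: $V_\l$ is finite-dimensional, it embeds essentially in $M_\l\cong \Lambda/\Lambda\l$, and for every $\alpha\in\Lambda\setminus\Lambda\l$ there is a monomial $b$ of degree $|\alpha|$ with $0\neq b\star_\l\alpha\in V_\l$. These facts, combined with the Bezout-type Theorem~\ref{main2}, are all that is needed.

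For claim (1), I first verify the analog of Lemma~\ref{pre1}: the ``telescoping'' computation $r=\alpha+\alpha\delta+\cdots+\alpha\delta^l+r\delta^{l+1}$ (with $\delta=-\underline\l$) used only comonicity and works verbatim, giving $\Leo(X)\l\cap\Lambda=\Lambda\l$ and Lemma~\ref{pre3}'s conclusion $\Lambda+\Leo(X)\l=\Leo(X)$. Next, I show $\Leo(X)\l$ is maximal iff $\l$ is irreducible: if $\l=\g_1\g_2$ with $\g_i$ proper comonic, then $\Leo(X)\l\subsetneq\Leo(X)\g_2\subsetneq\Leo(X)$; conversely, Theorem~\ref{main2} reduces the verification of $\Leo(X)\l+\Leo(X)\alpha=\Leo(X)$ (for $\alpha\notin\Leo(X)\l$) to a finite induction on degree, precisely as in Corollary~\ref{step1}(1). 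The equivalence between simplicity of $M_\l$ and of $V_\l$ is then forced by Corollary~\ref{sup1}: if $M_\l$ is simple then so is its essential finite-dimensional submodule $V_\l$; conversely, if $V_\l$ is simple, any nonzero submodule $N\subseteq M_\l$ contains a nonzero $\alpha+\Lambda\l$ with $\alpha\in\Lambda$, and some $b\star_\l\alpha$ is a nonzero element of $V_\l\cap N$, so $V_\l\subseteq N$, and iterating the $\star_\l$-action on a generator one sees $N=M_\l$.

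For claim (2), I use the chain $\Leo(X)\supset\Leo(X)\p_l\supset\Leo(X)\p_{l-1}\p_l\supset\cdots\supset\Leo(X)\l$. Right multiplication by $\p_{i+1}\cdots\p_l$ induces an $\Leo(X)$-isomorphism $\Leo(X)/\Leo(X)\p_i\cong \Leo(X)\p_{i+1}\cdots\p_l/\Leo(X)\l$, so each factor is simple by claim (1). This shows $M_\l$ has length $m$. Passing to $V_\l$ via the essential embedding, the correspondence between submodules of $M_\l$ and left ideals of $\Leo(X)$ containing $\Leo(X)\l$ (which by Corollary~\ref{acc}'s analog, provable from Theorem~\ref{main2}, are all of the form $\Leo(X)\delta$ for comonic $\delta$) transfers to a lattice of submodules of $V_\l$ that matches the composition series of $M_\l$, so $V_\l$ also has length $m$.

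For claim (3), one direction is immediate from Lemma~\ref{pre3}'s analog $\Lambda/\Lambda\l\cong M_\l$. For the equivalence with $V_\l\cong V_\g$: if $V_\g\cong V_\l$ as $\Lambda$-modules, then tensoring up (or, more elementarily, extending via the essentialness of $V_\l$ in $M_\l$ and the fact that both $M_\g, M_\l$ are the injective hulls of the respective $V$'s in the localized category) yields $M_\g\cong M_\l$. Conversely, given $M_\g\cong M_\l$, restriction to $\Lambda$ and the essentialness of $V_\l\subseteq M_\l$ show the isomorphism carries $V_\g$ onto $V_\l$. The passage between similarity over $\Lambda$ and over $\Leo(X)$ then copies the argument of Corollary~\ref{unique1}(3) using \cite[Proposition~3.3.4]{c2}.

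The main obstacle is the failure of $\Leo(X)\l=\Leo(X)(\Lambda^*\cap\Leo(X)\l)$ in infinite rank, which rules out the clean $L_\l$ bookkeeping. The workaround, and the crucial conceptual point, is that Corollary~\ref{sup1} still guarantees $V_\l$ is essential and finite-dimensional inside $M_\l$; this single fact, together with Theorem~\ref{main2}, is strong enough to salvage each of the three claims without any reference to $\Lambda^*$.
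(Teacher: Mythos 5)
Your proposal correctly identifies the two ingredients the paper invokes (Corollary~\ref{sup1} and Theorem~\ref{main2}) and correctly flags the obstruction (failure of the $L_\l$-correspondence in infinite rank). The directions that go \emph{from} $V_\l$ \emph{to} $M_\l$ are sound: if $V_\l$ is simple then essentialness immediately forces $M_\l$ simple, and the telescoping chain argument giving $M_\l$ length $m$ is fine. The gap is in the reverse transfer, on which all three items hinge. You assert that "if $M_\l$ is simple then so is its essential finite-dimensional submodule $V_\l$" is \emph{forced} by Corollary~\ref{sup1}. It is not: essentialness of a $\Lambda$-submodule inside a simple $\Leo(X)$-module does not make that submodule simple over the smaller ring $\Lambda$. (Already in the finite-rank setting this is the entire content of Corollary~\ref{step1}(2), which is proved via $L_\l=\Lambda^*\cap\Leo\l$, Corollary~\ref{flatpro}(3), and the local ring $\Lambda^*/(I^*)^m$ --- all tools you have correctly noted are unavailable here. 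Remark~\ref{import1}(3) shows just how delicate the $\Lambda^*$-versus-$\Leo$ submodule comparison already is for finite $n$.) Unwinding your claim, one gets from $\Leo(X)W=M_\l$ only the equation $\Lambda W+\Lambda\l=\Lambda$ in $\Lambda$, and essentialness plus Theorem~\ref{main2} gives no way to conclude $1\in W$ from this. The same gap appears in item (2) ("transfers to a lattice of submodules of $V_\l$") and item (3) ("the isomorphism carries $V_\g$ onto $V_\l$" --- two essential finite-dimensional $\Lambda^*$-submodules of $M_\l$ need not coincide).

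The missing step is a reduction to finite rank, and it is what Corollary~\ref{sup1}'s phrase "a monomial $b$ \emph{in variables of} $\a$" is silently pointing to. Let $Y=X_\l$ be the finite set of variables occurring in $\l$. Since every cofactor of $\l$ lies in $\Lambda_Y:=K\langle Y\rangle$ and since $x\star_\l v=0$ for $x\notin Y$ and $v\in V_\l$, the $\Lambda$-module $V_\l$ and the $\Lambda_Y$-module $V_\l$ have identical submodule lattices. Moreover $\l$ is irreducible in $\Lambda$ if and only if it is irreducible in $\Lambda_Y$: grade $\Lambda$ by total degree in the variables outside $Y$; then $\a\b\in\Lambda_Y$ with $\a,\b\neq 0$ forces $\a,\b\in\Lambda_Y$, so factorizations in $\Lambda$ of $\l$ already live in $\Lambda_Y$. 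Item (1) is then immediate from Theorem~\ref{goal1} applied to $\Lambda_Y$. For item (2) one gets in the same way that $V_\l$ has length $m$ over $\Lambda_Y$, hence over $\Lambda$, and the composition factors are the $V_{\p_i}$ (each $\p_i\in\Lambda_Y$ by the grading argument). For item (3), the retraction $\pi_Y:\Lambda\to\Lambda_Y$ (killing variables outside $Y\cup X_\g$) shows that a comaximal relation $\g\a=\b\l$ witnessing similarity over $\Lambda$ projects to one over $\Lambda_{Y\cup X_\g}$ and conversely, so the three equivalences reduce to the finite-rank Corollary~\ref{sim0}. If you supply this reduction, the rest of your outline goes through.
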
 

{{\bf Acknowledgement.} The authors express their gratitude to Professor Gene Abrams for his critical comments, questions and noble English assistance making this work transparent and enjoyable to the readers.}

\bibliographystyle{amsplain}

\end{document}